\newtheorem{coro}[theorem]{Corollary}
\newtheorem{lmm}{Lemma}[section]
\newtheorem{thm}{Theorem}[section]
\newtheorem{remark}{Remark}[section]
\title{Two types of spectral volume methods for 1-D linear hyperbolic equations with degenerate variable coefficients}
\author{ Minqiang Xu\thanks{College of Science, Zhejiang University of Technology, Hangzhou, 310023, P.R. China, and School of Computer Science and Engineering, Sun Yat-sen University, Guangzhou 510275, P.R. China. Email: xumq9@mail2.sysu.edu.cn. The research of this author was supported by the Opening Project of Guangdong Province Key Laboratory of Computational Science at the Sun Yat-sen University(2021008).}
\and Yanting Yuan\thanks{School of Computer Science and Engineering, Sun Yat-sen University, Guangzhou 510275, P.R. China. Email: yuanyt6@mail2.sysu.edu.cn}
\and Waixiang Cao\thanks{School of Mathematical Sciences,  Beijing Normal University, Beijing 100875, China.
Email: caowx@bnu.edu.cn. The research of this author was supported in part by NSFC Grant 11871106.}
\and Qingsong Zou\thanks{Corresponding author.
School of Computer Science and Engineering, and Guangdong Province Key
Laboratory of Computational Science, Sun Yat-sen University, Guangzhou 510275, China.
Email: mcszqs@mail.sysu.edu.cn. The research of this author was supported in part by NSFC Grant 12071496,
Guangdong Provincial NSF Grant 2017B030311001, and Guangdong Province Key Laboratory of Computational
Science at the Sun Yat-sen University(2020B1212060032)}}
\begin{document}

\maketitle

%
%
\medskip

\begin{abstract}
 In this paper, we analyze  two classes of spectral volume (SV) methods
 for one-dimensional  hyperbolic equations with degenerate variable coefficients.
 The two classes of SV methods are constructed by letting a piecewise $k$-th order ($k\ge 1$ is an arbitrary integer)
 polynomial function satisfy the local conservation law in each {\it control volume} obtained by
  dividing the interval element of the underlying mesh with $k$ Gauss-Legendre points (LSV) or  Radaus points (RSV).  The $L^2$-norm
  stability and optimal order convergence properties for both methods are rigorously proved for
 general non-uniform meshes. The superconvergence behaviors of the two SV schemes have been also investigated : it is proved that under the $L^2$ norm,
 the SV flux function approximates the exact flux with $(k+2)$-th order and  the SV solution approximates the exact solution with $(k+\frac32)$-th order; some superconvergence behaviors  at certain special points and for element averages have been also discovered and proved. Our theoretical findings are verified by several numerical experiments.
\vskip .7cm
{\bf AMS subject classifications.} \ {Primary 65N30, 65N25, 65N15.}

\vskip .3cm

{\bf Key words.} \ {Spectral Volume Methods; $L^2$ stability; Error estimates; Superconvergence.}
\end{abstract}

\section{Introduction}
In this paper, we propose and analyze two classes of SV methods for solving the following one-dimensional linear hyperbolic equation:
\begin{eqnarray}\label{eqn:para}
\left \{
\begin{array}{lll}
u_t+(\alpha u)_x=g(x,t), \quad & \quad(x,t)\in[0,2\pi]\times [0,T],\\
~~~~u(x,0)=u_0(x), \quad & \quad x\in [0,2\pi],\\
\end{array}
\right.
\end{eqnarray}
where $u_0(x)$, $\alpha(x)$ and $g(x,t)$ are  all smooth. For simplicity,  here we only discuss the periodic boundary condition.
 We assume that $\alpha$ may degenerate and
 has a finite number of turning points on the whole domain $[0,2\pi]$.

Hyperbolic equations have wide  applications in chemical reactions, combustion, explosions, and multi-phase flow  problems,
transmission of electrical signal in the animal nervous system and so on. Due to the fact that the analytic solution are usually very difficult to be obtained,
the numerical simulation  becomes more and more important in the study of hyperbolic equations.
During the past several decades, numerical methods for hyperbolic  problems have been extensively studied. In particular, a lot of  high order (or high resolution)
numerical schemes have been designed and studied.
The  state-of-the-art high order schemes for hyperbolic equations include  the high-order k-exact  FV method \cite{FV1,FV2}, the essentially nonoscillatory (ENO) method \cite{ENO:0,ENO}, the weighted ENO (WENO) method
\cite{ENO:2,ENO:1}, and the discontinuous Galerkin (DG) method \cite{ReedHill1973,Cockburn;Shu:1989,Cockburn;Shu:JCP1998,Houston,Cockburn;Karniadakis},
and the spectral volume(SV)  mehtod   \cite{wang2002_1,wang2002_2,wang2004_1,wang2004_2,VanLacorWang2007 }, and so on.

 The SV method was introduced and studied by Z. J. Wang and his colleagues \cite{wang2002_1, wang2002_2,wang2004_1,wang2004_2,VanLacorWang2007,Liu-Vinokur-Wang,SunWangLiu2007, wang2006_2}.
 Similar to the well-known DG method, the SV method has many advantages such as : it
is of high-order, it can be established on nonuniform or unstructured grids, it has compact stencils, it only requires the information of the immediate cell neighbours to evaluate the residuals of one target cell and thus it can be easily parallelizable computing.  We also refer to  \cite{SunWang2014,zhang-shu}  for the
 comparisons between the SV and DG methods.

It is known that the  DG method has been heavily studied, including the theory for its stability and convergence (see,  e.g., \cite{Lesaint:1974,Richter,Cockburn,Johnson,Peterson}).
However, the theory for the SV method is far less developed. Different from the DG method, the stability of the SV method
is heavily dependent upon the suitable  partitions (i.e., the construction of the control volumes).
In \cite{s1,s2}, Van den Abeele et al. investigated the influence of the partitioning into control volumes and the wave propagation properties of the SV method for 1D and 2D hyperbolic equations. with the help of the wave  properties, they showed that the third and fourth-order  SV schemes based on the Gauss-CLobatto distributions are weakly unstable. Later, Van den Abeele et al. in \cite{VdA2009} adopt the matrix method to study the stability 
of second and third-order SV schemes for 3D tetrahedral grids, and proved that the second SV schemes was stable while a two-parameters family of third-order scheme proposed in the literature is unstable. Zhang and Shu in \cite{zhang-shu} use a method of   Fourier type analysis to study the stability of $p$-th order ($p\le 3$) SV schemes 
 applied to one  dimensional hyperbolic conservation laws for uniform meshes. 
   Very recently,  Cao and Zou in \cite{cao-zou:sv}  presented and studied two stable SV schemes of any order for one dimensional hyperbolic conservation laws,
    and proved that for a general nonuniform mesh,  the two SV schemes (i.e., LSV and RSV) are stable and can achieve optimal convergence orders in the $L^2$  norms. Moreover, the superconvergence properties of the SV solution at some special points as well as for the
 cell averages are also  investigated.  A surprising fact that the RSV method is identical to the upwind DG method for hyperbolic equations  with
 constant coefficients is also revealed, with a rigorous mathematical proof.

 The current work is the second in a series to study
  the stability, accuracy, convergence and
 superconvergence properties of the SV method, where the degenerate variable coefficients
  hyperbolic equation  \eqref{eqn:para} is under  concerned.  Compared with constant coefficients problems, the analysis 
  for problems with  degenerate variable coefficients is much more sophisticated: on the one hand, as the coefficient changes signs, 
  a suitable numerical flux as well as a partition into the control volumes are of great importance to ensure the stability of the SV methods; 
  on the other hand, the degenerate coefficients may lead to a descent of the convergence rate, which makes the convergence analysis 
  more difficult.  
  Note that superconvergence theory of the counterpart DG methods
has  extensively studied and  most of  the studies are based on the constant coefficients problems,  i.e., $\alpha=1$ in \eqref{eqn:para}
 (see, e.g., \cite{Adjerid;Massey2006,Adjerid;Weinhart2011,caozhangzou2014,Cao-Shu-ZhangM2AN,cao:shu:yang:zhang2Dg, Cao-Shu-Yang-Zhang:nonlinear,Chen;Shu:SIAM2010,Yang;Shu:SIAM2012}). In 2017, Cao et al. \cite{Cao-Shu-ZhangM2AN} studied the superconvergence of DG methods for the problem \eqref{eqn:para}
  and showed that   if the variable or nonlinear coefficients degenerate, i.e.,
  the coefficient changes signs or otherwise achieves the value zero,  the superconvergence results are of great differences from those for problems with constant coefficients
  or variable coefficients away from $0$.  To be more precise,
  the superconvergence rate may depend upon specific properties of the variable coefficient function $\alpha$ and the highest superconvergence rate is $h^{k+\frac 32}$,
 $\frac 12$ order higher than the optimal convergence rate.
 It is natural to ask whether the similar superconvergence phenomenon still exists for SV methods when applied to
hyperbolic problems with variable coefficients. 
In this work, we will provide a confirmatory answer to this question.
Inspired by the work in \cite{Cao-Shu-ZhangM2AN} and \cite{cao-zou:sv},  we first 
construct our control volume by using Gauss points or right/left Radau points of the underlying meshes, dependent on the sign 
of the variable coefficients, and then prove the specially designed any order SV schemes is $L^2$ stable for non-uniform meshes.
By defining a special projection of the exact solution and using the idea of correction idea, 
 we also study the convergence and 
  superconvergence behaviors of the SV method  for flux function approximation and solution approximation itself. 
  We show  that if the variable coefficients is away from $0$, then all the convergence results are the same as those for 
 problems with constant coefficients. While if the  coefficient changes its signs,  
 the highest superconvergence rate of the SV solution itself is $\mathcal{O}(h^{k+\frac{3}{2}})$, and for the SV flux function approximation,
  the superconvergence rate can be  improved to $\mathcal{O}(h^{k+2})$.  Finally,
  superconvergence  of the SV approximation  at some special points (which are identified as
  Radau points and Gauss points)  as well as for cell averages, are also obtained.

   To end this introduction,   we would like to emphasize that  the contribution of  the current study  lies in that:  On the one hand,
   we  present two SV schemes of any order  for  possibly degenerate variable coefficient problems  and prove the stability, convergence and
   superconvergence properties of the proposed numerical methods, fill the vacancy in the theory of the SV methods. On the other hand,
   the established theoretical findings for degenerate variable coefficient problems extend the   results for constant coefficient problems to the general case.
      By doing so, we enrich the  convergence  theory of the SV method for linear hyperbolic equations in one dimension.  Furthermore,
   the analysis technique used for linear hyperbolic equations with degenerate variable coefficients  is a necessary  and significant  step to
   study the nonlinear hyperbolic equations,  which arises in many applications such as computational fluid dynamics and computational electro-magnetism.



  The rest of the paper is organized as follows.
   In Section 2, we present two classes of SV methods
   for linear conservation laws with degenerate variable coefficients. In Section 3,  we study the stability of the RSV and LSV methods, where
  inequalities in energy norm and flux function norm are established.  In Section 4, optimal error estimates in the $L^2$ norm for both RSV and LSV are proved.
 Sections  5 is dedicated to the analysis of the superconvergence   behavior of the SV solution itself and flux function approximation.
 We show that the superconvergence phenomenon exists for general variable coefficient hyperbolic equations, and the superconvergence rate may depend upon specific properties of the variable coefficient function.
  In Section 6, we provide some numerical examples to support our theoretical findings.  Finally, some concluding remarks are presented in Section 7.

\section{RSV and LSV methods}

Let $\Omega=[0,2\pi]$ and $0=x_0<x_{\frac{1}{2}}<x_{\frac{3}{2}}<\cdots<x_{N+\frac{1}{2}}=2\pi$ be $N+1$ distinct points which partition the $\Omega$ into $N$ elements. For any positive integers $l$, we define $\mathbb{Z}_l=\{1,2,\cdots,l\}$ and $\mathbb{Z}_l^0=\{0,1,\cdots,l\}$. For any $i\in \mathbb{Z}_N$, we denote
\begin{equation*}
  \mathbf{V}_i=[x_{i-\frac{1}{2}},x_{i+\frac{1}{2}}],~\text{and}~h_i=x_{i+\frac{1}{2}}-x_{i-\frac{1}{2}}.
\end{equation*}
Let $\overline{h}_i=\frac{h_i}{2}$ and $h=\max \limits_{ j\in\mathbb{Z}_N }h_j$. We also assume that the mesh is shape-regular, i.e., $h \lesssim h_i, i\in \mathbb{Z}_N$. Let
$-1=s_0<s_1<s_2<\cdots<s_{k+1}=1$ be $k+2$ distinct  points in the reference element $[-1,1]$. Then we get a partition of each element $\mathbf{V}_i$ via the following linear transformation,
\begin{equation}\label{Subdividingpoints}
  x_{i,j}=\frac{h_i}{2}s_j+\frac{1}{2}(x_{i-\frac{1}{2}}+x_{i+\frac{1}{2}}),~i\in\mathbb{Z}_N,j\in \mathbb{Z}_{k+1}^0.
\end{equation}
Define the finite element space $V_h$ and the piecewise constant function space $\mathcal{V}_h$ by
\begin{equation*}
  V_h=\{v_h:v_h|_{\mathbf{V}_i}\in \mathcal{P}_k(\mathbf{V}_i),i\in \mathbb{Z}_N\},
\end{equation*}
\begin{equation*}
  \mathcal{V}_h=\{v_h^*:v_h^*|_{\mathbf{V}_{i,j}}\in \mathcal{P}_0(\mathbf{V}_{i,j}),i\in \mathbb{Z}_N,j\in \mathbb{Z}_k^0\},
\end{equation*}
where $\mathcal{P}_k$ is the space of polynomials of degree at most $k$.  
 Let
\begin{equation*}
  \mathcal{H}_h=\{v:v|_{\mathbf{V}_{i}}\in H^1,i\in \mathbb{Z}_N\}.
\end{equation*}
 Here and throughout this paper,  we  adopt standard notations for Sobolev spaces such as  $W^{m,p}(D)$ on sub-domain $D\subset\Omega$ equipped with the norm
$\|\cdot\|_{m,p,D}$ and semi-norm $|\cdot|_{m,p,D}$. When $D=\Omega$, we omit the index $D$; and if $p = 2$,
we set $W^{m,p}(D)=H^m(D)$ and $\|\cdot\|_{m,p,D}=\|\cdot\|_{m,D}$ and $|\cdot|_{m,p,D}=|\cdot |_{m,D}$.


 The SV scheme for \eqref{eqn:para} read as: Find $u_h\in V_h$ such that
\begin{equation}\label{SVschem}
  \int_{x_{i,j}}^{x_{i,j+1}}(u_h)_t\text{d}x+\alpha \widehat{u}_h|_{i,j+1}-\alpha \widehat{u}_h|_{i,j}=\int_{x_{i,j}}^{x_{i,j+1}}g\text{d}x,
\end{equation}
where $u|_{i,j}=u(x_{i,j})$ and $\hat{u}_h$ is the numerical flux. In this paper, we choose the upwind flux. That is,
\begin{eqnarray}\label{eqn:flux}
\widehat{u}_h|_{i+\frac{1}{2}}=\left \{
\begin{array}{ll}
u_h^{-}|_{i+\frac{1}{2}}, &\alpha|_{i+\frac{1}{2}}> 0,\\
u_h^{+}|_{i+\frac{1}{2}}, &\alpha|_{i+\frac{1}{2}}\leq 0,
\end{array}
\right.
\end{eqnarray}
where $u_h^{+}|_{i+\frac 12}$ and $u_h^{-}|_{i+\frac 12}$  denotes  the right and left limits of $u_h$ at the point $x_{i+\frac 12}$, respectively.

We observe that the choice of the partition points $\{x_{i,j}\}_{j=0}^{k+1}$ of $\mathbf{V}_{i}$ has a great influence on the SV scheme \eqref{SVschem}.
By  taking different $s_j$ or $x_{i,j}$, we get different SV schemes.
In this paper, we will consider two classes of SV schemes. One is constructed by using Radau points while the other   is  using Gauss-Legendre points.
We call the corresponding schemes as Radau spectral volume (RSV) method and Gauss Legendre spectral volume (LSV) method. To be more precise,

\emph{\textbf{LSV Scheme}}:  $\{s_j\}_{j=1}^k$ are chosen as Gauss-Legendre points, i.e., $s_j, j\in\mathbb{Z}_k$ are $k$ zeros of the Legendre polynomial $L_k$ of degree $k$.

\emph{\textbf{RSV Scheme}}:  $\{s_j\}_{j=1}^k$ are taken  according to the sign of $\alpha$ on the element boundary, i.e.,
\begin{itemize}
\item If $\alpha(x_{i-\frac{1}{2}})>0,\alpha(x_{i+\frac{1}{2}})>0$, $\{s_j\}_{j=1}^{k+1}$ are chosen as  right Radau points, i.e.,  zeros of the right Radau polynomial $L_{k+1}-L_k$.
\item If $\alpha(x_{i-\frac{1}{2}})<0,\alpha(x_{i+\frac{1}{2}})<0$, $\{s_j\}_{j=0}^{k}$ are chosen as  left Radau points, i.e.,  zeros of the right Radau polynomial $L_{k+1}+L_k$.
\item Otherwise, either of the left Radau points or right Radau points can be used.
\end{itemize}

We end with this section by discussing the above two SV schemes under the framework of a Petrov-Galerkin method.  Define  the SV bilinear form on $\mathcal{H}_h\times \mathcal{V}_h$ by
\begin{equation}\label{SV bilinear form on element}
  a_{h,i}(v,w^*)=\sum_{j=0}^kw_{i,j}^*\int_{x_{i,j}}^{x_{i,j+1}}v_t\text{d}x+\sum_{j=0}^kw_{i,j}^*\left(\alpha \widehat{v}|_{i,j+1}-\alpha \widehat{v}|_{i,j}\right),\ \
  \forall v\in \mathcal{H}_h,w^*\in\mathcal{V}_h
\end{equation}
here $w_{i,j}^*$ represents the value of $w^*$ on $\mathbf{V}_{i,j}=[x_{i,j},x_{i,j+1}]$ for $j\in \mathbb{Z}^0_k$. Let
\begin{equation}\label{SV bilinear form}
  a_{h}(v,w^*)=\sum_{i=1}^Na_{h,i}(v,w^*),\forall v\in \mathcal{H}_h,w^*\in\mathcal{V}_h,
\end{equation}
Then the SV scheme \eqref{SVschem} can be rewritten as: Find $u_h\in V_h$ such that
\begin{equation}\label{SVBFscheme}
  a_h(u_h,w_h^*)=(g,w_h^*),\forall w_h^*\in \mathcal{V}_h,
\end{equation}

\begin{remark} Note that the SV scheme \eqref{SVBFscheme} is consistent, that is, the exact solution $u$ of \eqref{eqn:para} also satisfies
\begin{equation}\label{consistentproperty}
  a_h(u,w_h^*)=(g,w_h^*), \forall w_h^*\in \mathcal{V}_h.
\end{equation}
In other words, the Galerkin orthogonality holds
\begin{equation}\label{Galerkin orthogonality}
  a_h(u-u_h,w_h^*)=0, \forall w_h^*\in \mathcal{V}_h.
\end{equation}
\end{remark}


\section{Stability analysis of RSV and LSV}

In this section, we  study the stability of the RSV and LSV methods.  To this end, we first relate the two SV schemes to some
Gauss-Legendre and  Radau numerical quadratures.
For any $f\in L^2([-1,1])$, we let
\begin{equation}\label{quadraturformula}
  Q_k[f]=\sum_{j=0}^{k+1}A_jf(s_j),~\text{and}~R[f]=\int_{-1}^1f(s)\text{d}s-Q_k[f],
\end{equation}
where $A_j=\int_{-1}^1l_j(s)\text{d}s, j\in\mathbb{Z}^0_{k+1}$ with $l_j$   the lagrange basis function at $s_j$.

\begin{itemize}
\item  For Gauss-Legendre quadrature, $\{s_j\}_{j=1}^k$ are $k$ Gauss points and $A_0=A_{k+1}=0$.
  Note that the Gauss-Legendre numerical quadrature is exact for polynomials of degree not more than $2k-1$, and the remainder of the quadrature is
     \begin{equation}\label{quadraturRemainder}
       R[f]=\frac{2^{2k+1}[k!]^4}{(2k+1)[(2k)!]^3}f^{(2k)}(\xi),\xi\in(-1,1).
     \end{equation}
\item For right Radau quadrature, $\{s_j\}_{j=1}^{k+1}$ are $k+1$  right Radau  points and $A_0=0$. The right Radau quadrature is exactly true for polynomials of degree not  more than $2k$ and the remainder of the quadrature is
     \begin{equation}\label{RRquadraturRemainder}
       R[f]=\frac{f^{(2k+1)}(\xi)}{(2k+1)!}\int_{-1}^1\prod_{j=1}^{j=k}(s-s_j)^2(s-1)\text{d}s,\xi\in(-1,1).
     \end{equation}

\item For left Radau quadrature,  $\{s_j\}_{j=0}^{k}$  are $k+1$    left Radau  points and $A_{k+1}=0$. The left Radau quadrature is exactly true for polynomials of degree not  more than $2k$.
     \begin{equation}\label{LRquadraturRemainder}
       R[f]=\frac{f^{(2k+1)}(\xi)}{(2k+1)!}\int_{-1}^1\prod_{j=1}^{j=k}(s-s_j)^2(s+1)\text{d}s,\xi\in(-1,1).
     \end{equation}
\end{itemize}

\subsection{Error equation}
 We  introduce a transformation $T$ from $V_h$ onto $\mathcal{V}_h$ as follows.
\begin{equation}\label{transformation}
  w^*:=Tw=\sum_{i=1}^{N}\sum_{j=0}^kw_{i,j}^*\chi_{\mathbf{V}_{i,j}}(x),w\in V_h,
\end{equation}
where  $\chi_A, A\subset [0,2\pi]$ is
the characteristic function defined as $\chi_A=1$ in $A$ and $\chi_A=0$ otherwise, and $w_{i,j}^*$ can be obtained by the following recurrence formula:
\begin{equation}\label{wij}
w_{i,0}^*=w_{i-\frac{1}{2}}^++A_{i,0}w_x(x_{i,0}), \ \ w_{i,j}^*=w_{i,j-1}^*+A_{i,j}w_x(x_{i,j}),j\in \mathbb{Z}_k,
\end{equation}
 with $A_{i,j}=\overline{h}_iA_j,  (i,j)\in \mathbb{Z}_N\times  \mathbb{Z}_k^0.$   By a direct calculation, we have
 \begin{equation*}
  w^*_{i,k}=\sum_{j=0}^{k}(w_{i,j}^*-w_{i,j-1})+w_{i,0}=w_{i-\frac{1}{2}}^++\sum_{j=0}^{k+1}A_{i,j}w_x(x_{i,j})- A_{i,k+1}w_x(x_{i+\frac{1}{2}}^-).
\end{equation*}
 Noticing that numerical quadrature is exact for $w_x$, we get
\begin{equation}\label{wik}
   w^*_{i,k} =w_{i+\frac{1}{2}}^--A_{i,k+1}w_x(x_{i+\frac{1}{2}}^-).
\end{equation}
\begin{remark} The values of $w_{i,0}^*, w_{i,k}^*$ are closely related to the choice of the points $s_j$. Specifically,
\begin{itemize}
\item If the partition points are chosen as Gauss-Legendre points, then $w_{i,0}^*=w_{i-\frac{1}{2}}^+$ and $w_{i,k}^*=w_{i+\frac{1}{2}}^-$.
\item If the partition points are chosen as right-Ladau points, then  $w_{i,0}^*=w_{i-\frac{1}{2}}^+$.
\item If the partition points are chosen as left-Ladau points, then  $w_{i,k}^*=w_{i+\frac{1}{2}}^-$.
\end{itemize}
\end{remark}

We have the following properties for the  transformation $T$.

\begin{lmm}
Suppose $T$ be the transformation defined by \eqref{transformation} and the partition points given by the quadrature abscissae described in \eqref{quadraturformula}, then the transformation $T$ is bijective and bounded, i.e.,
\begin{equation}\label{Tboundedproperty}
  \|Tw\|\lesssim \|w\|,\forall w\in V_h.
\end{equation}
 Here and in the following,  the notation $x\lesssim y$ means that $x$ can be bounded by $y$ multiplied by a
constants $C$, which is independent of the mesh size.
\end{lmm}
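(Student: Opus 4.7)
The plan is to establish bijectivity by a dimension count combined with injectivity, and then prove the bound by unrolling the recurrence \eqref{wij} and invoking standard local inverse and trace inequalities.

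\textbf{Step 1: Injectivity.} First I observe that $\dim V_h = \dim \mathcal{V}_h = N(k+1)$, so it suffices to show $Tw=0 \Rightarrow w=0$. Fix an element $\mathbf{V}_i$. Assuming $w_{i,j}^*=0$ for all $j$, the recurrence \eqref{wij} forces $A_{i,j}w_x(x_{i,j})=0$ for each $j\in\mathbb{Z}_k$, together with the boundary relation coming from $w_{i,0}^*=0$. I would now go through the three cases:
\begin{itemize}
\item Gauss-Legendre: $A_0=A_{k+1}=0$, while $A_j>0$ for $j\in\mathbb{Z}_k$. Hence $w_{i-\frac12}^+=0$ (from $w_{i,0}^*=0$) and $w_x$ vanishes at the $k$ Gauss points $x_{i,1},\ldots,x_{i,k}$. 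Since $w_x|_{\mathbf{V}_i}$ has degree at most $k-1$, this forces $w_x\equiv 0$ on $\mathbf{V}_i$, so $w$ is constant and equal to $w_{i-\frac12}^+=0$.
\item Right Radau: $A_0=0$ and $A_j>0$ for $j\in\mathbb{Z}_{k+1}$. Again $w_{i-\frac12}^+=0$, and $w_x$ vanishes at the $k$ interior Radau nodes, which likewise forces $w_x\equiv 0$ on $\mathbf{V}_i$.
\item Left Radau: $A_{k+1}=0$ and $A_j>0$ for $j\in\mathbb{Z}_k^0$. The vanishing of $w_x$ at the $k$ interior nodes $x_{i,1},\ldots,x_{i,k}$ gives $w_x\equiv 0$ on $\mathbf{V}_i$; substituting into $w_{i,0}^*=0$ then yields $w_{i-\frac12}^+=0$.
\end{itemize}
In each case $w\equiv 0$ on every $\mathbf{V}_i$, proving injectivity and hence bijectivity.

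\textbf{Step 2: Boundedness.} By telescoping \eqref{wij} I would write
\begin{equation*}
w_{i,j}^* = w_{i-\frac12}^+ + \sum_{l=0}^{j} A_{i,l}\, w_x(x_{i,l}),\qquad j\in\mathbb{Z}_k^0.
\end{equation*}
Using $A_{i,l}=\bar h_i A_l \lesssim h_i$ together with the local trace inequality $|w_{i-\frac12}^+|\lesssim h_i^{-1/2}\|w\|_{0,\mathbf{V}_i}$ and the inverse+point-value estimate $|w_x(x_{i,l})|\lesssim h_i^{-3/2}\|w\|_{0,\mathbf{V}_i}$ (valid since $w|_{\mathbf{V}_i}\in\mathcal{P}_k$), I obtain
\begin{equation*}
|w_{i,j}^*| \lesssim h_i^{-1/2}\|w\|_{0,\mathbf{V}_i}.
\end{equation*}
Squaring and summing against the control-volume lengths, using $\sum_{j=0}^{k}(x_{i,j+1}-x_{i,j})=h_i$ and the shape-regularity assumption, yields
\begin{equation*}
\|Tw\|_{0,\mathbf{V}_i}^2 = \sum_{j=0}^{k}|w_{i,j}^*|^2(x_{i,j+1}-x_{i,j}) \lesssim \|w\|_{0,\mathbf{V}_i}^2,
\end{equation*}
and a final summation over $i\in\mathbb{Z}_N$ delivers \eqref{Tboundedproperty}.

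\textbf{Anticipated difficulty.} The routine mechanics are easy; the only subtlety is the injectivity step, where one must be careful that the degree of $w_x$ is exactly $k-1$ so that $k$ vanishing conditions at distinct quadrature points are enough to force $w_x\equiv 0$, and that the endpoint conditions coming from $A_0$ or $A_{k+1}$ being zero (or not) are correctly matched with the information supplied by $w_{i,0}^*=0$ in each of the three partition choices. Keeping the three cases aligned with the correct indexing of $\{s_j\}$ from \eqref{quadraturformula} is the main bookkeeping hazard.
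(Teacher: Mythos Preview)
Your proposal is correct and follows essentially the same approach as the paper: injectivity via the recurrence \eqref{wij} forcing $w_x$ to vanish at $k$ distinct nodes (hence $w_x\equiv 0$ since $w_x\in\mathcal{P}_{k-1}$) together with a boundary value, bijectivity by dimension count, and boundedness by inverse/trace inequalities. Your treatment is in fact more explicit than the paper's---the paper handles the three quadrature cases in a single sentence and dispatches boundedness with the phrase ``by using the inverse inequality''---whereas you spell out the case analysis and the $h_i^{-1/2}$ scaling, which is helpful but not a different method.
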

\begin{proof}
 We first show the transformation $T$ is injective. We assume that $Tw=0$, then we have $w_{i,j}=0, j\in\mathbb{Z}_k$,  and thus
 $w_x(x_{i,j})=0$ and either $w_{i-\frac{1}{2}}^{+}$ or $w_{i+\frac{1}{2}}^{-}=0$.
 As $w_x\in\mathcal{P}_{k-1}(\mathbf{V}_{i})$, we obtain that $w_x\equiv 0$. Therefore, we deduce that $w\equiv 0$. Suppose $\{\varphi_{i,j}\}_{j=0}^k$ is a basis of $V_h$, it is easy to prove that $\{T\varphi_{i,j}\}_{j=0}^k$ are linear independent, which yields that $\{T\varphi_{i,j}\}_{j=0}^k$ is a basis of $\mathcal{V}_h$.  Then $T$ is bijective.
By using the inverse inequality,  \eqref{Tboundedproperty} follows, which indicates that  $T$ is bounded.
\end{proof}

\begin{remark}
 We conclude from  Lemma 3.1 that the SV scheme \eqref{SVschem} is equivalent to: Find $u_h\in V_h$ such that
\begin{equation}\label{AlternativeSVBFscheme}
  a_h(u_h,w_h^*)=(g,w_h^*),\ \ \forall w_h\in V_h,\ \ w_h^*=Tw_h.
\end{equation}
\end{remark}


\begin{thm}\label{SVBAF}
Suppose $a_{h,i}(\cdot,\cdot)$ be the SV scheme defined by \eqref{SV bilinear form on element} with the partition points given by the quadrature abscissae described in \eqref{quadraturformula}. For any $v,w\in V_h$, let $V=\partial_x^{-1}v_t$ and $w^*=Tw$ be the peiecewise constant defined in \eqref{transformation}. Then
\begin{equation}\label{SVALterBF}
  a_{h,i}(v,w^*)=(v_t+(\alpha u)_x,w)_i+R_i[(V+\alpha v)w_x]+w_{i,0}^*(\alpha v^{+}-\alpha \widehat{v})|_{i-\frac{1}{2}}+w_{i,k}^*(\alpha\widehat{v}-\alpha v^-)|_{i+\frac{1}{2}}.
\end{equation}
\end{thm}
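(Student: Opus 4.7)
The plan is to convert $a_{h,i}(v,w^*)$ from its Riemann-sum form into an integral-plus-quadrature-remainder form by two applications of summation by parts, two applications of the quadrature identity from \eqref{quadraturformula}, and two applications of integration by parts on $\mathbf{V}_i$, and then to tidy up the endpoint contributions using the recurrence \eqref{wij}--\eqref{wik}.

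I will treat the two sums in \eqref{SV bilinear form on element} in parallel. Writing $V = \partial_x^{-1} v_t$ so that $\int_{x_{i,j}}^{x_{i,j+1}} v_t\,\mathrm{d}x = V(x_{i,j+1}) - V(x_{i,j})$, Abel summation gives
\begin{equation*}
\sum_{j=0}^{k} w_{i,j}^*\bigl(V(x_{i,j+1})-V(x_{i,j})\bigr) = w_{i,k}^* V(x_{i+\frac12}) - w_{i,0}^* V(x_{i-\frac12}) - \sum_{j=1}^{k}(w_{i,j}^* - w_{i,j-1}^*) V(x_{i,j}),
\end{equation*}
and an identical telescoping yields the analogous identity for the flux sum with $V$ replaced by $\alpha\widehat{v}$; at the interior abscissae $\widehat{v}|_{i,j}=v(x_{i,j})$ since $v$ is continuous on the open element.

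Next I invoke the recurrence \eqref{wij}, which gives $w_{i,j}^* - w_{i,j-1}^* = A_{i,j}\,w_x(x_{i,j})$, so the interior telescoped sums become the \emph{interior} contributions of the quadrature rule $Q_k$ applied to $Vw_x$ and $\alpha v w_x$ respectively. Completing each sum to the full quadrature by adding and subtracting the endpoint terms with weights $A_{i,0}$ and $A_{i,k+1}$, and then invoking \eqref{quadraturformula} to replace the full quadrature by the exact integral minus $R_i[\cdot]$, reduces everything to $\int_{\mathbf{V}_i}Vw_x\,\mathrm{d}x$ and $\int_{\mathbf{V}_i}\alpha v w_x\,\mathrm{d}x$ plus two remainders and two explicit endpoint contributions of the form $A_{i,0}(\cdot)w_x(x_{i,0})$ and $A_{i,k+1}(\cdot)w_x(x_{i+1/2}^-)$. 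Integrating by parts on $\mathbf{V}_i$, using $V_x = v_t$, then produces $(v_t,w)_i + ((\alpha v)_x,w)_i$ together with further boundary traces $Vw^\pm$ and $\alpha v^\pm w^\pm$ at $x_{i\pm 1/2}$.

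Finally, solving \eqref{wij} and \eqref{wik} for the traces of $w$ yields $w^+_{i-1/2} = w_{i,0}^* - A_{i,0}w_x(x_{i,0})$ and $w^-_{i+1/2} = w_{i,k}^* + A_{i,k+1}w_x(x_{i+1/2}^-)$. Substituting these into the accumulated endpoint contributions, the $V$-pieces cancel exactly (the $A_{i,k+1}Vw_x$ term offsets $(w_{i,k}^*-w^-_{i+1/2})V$ at $x_{i+1/2}$, and similarly at $x_{i-1/2}$), and the analogous cancellation in the $\alpha v$ pieces leaves precisely $w_{i,0}^*(\alpha v^+ - \alpha\widehat{v})|_{i-1/2} + w_{i,k}^*(\alpha\widehat{v}-\alpha v^-)|_{i+1/2}$. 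Combining these jumps with the two volume integrals and with $R_i[Vw_x]+R_i[\alpha v w_x] = R_i[(V+\alpha v)w_x]$ yields \eqref{SVALterBF}. The main obstacle is purely the endpoint bookkeeping: the $w^*$ values carry extra $A_{i,0}, A_{i,k+1}$ derivative corrections coming from the recurrence, and one must verify that these corrections exactly balance both the missing quadrature endpoint weights and the integration-by-parts boundary traces so that only the clean numerical-flux-minus-trace jumps remain.
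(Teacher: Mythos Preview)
Your proof is correct and follows essentially the same route as the paper: Abel summation on the telescoping sums, the recurrence \eqref{wij}--\eqref{wik} to identify the quadrature contributions, the quadrature identity \eqref{quadraturformula} to introduce $R_i[\cdot]$, and integration by parts to produce $(v_t+(\alpha v)_x,w)_i$. The only organizational difference is that the paper first telescopes the flux sum directly to obtain $(v_t+(\alpha v)_x,w^*)_i$ plus the two boundary jumps, and then proves the single identity $(f,w^*)_i=(f,w)_i+R_i[\partial_x^{-1}f\,w_x]$ (equation \eqref{vv}) once and applies it, whereas you run the two sums in parallel; this makes the paper's bookkeeping shorter and yields \eqref{vv} as a reusable lemma (it recurs in \eqref{vv1}).
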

\begin{proof}
Since both $v\in V_h$ and $\alpha$  are continuous at the interior points,  we have from  \eqref{SV bilinear form on element} that
\begin{eqnarray}\nonumber
  a_{h,i}(v,w^*)
  &=&(v_t,w^*)_i+w_{i,0}^*(\alpha v|_{i,1}-\alpha \widehat{v}|_{i-\frac{1}{2}})+w_{i,k}^*(\alpha \widehat{v}|_{i+\frac{1}{2}}-\alpha v|_{i,k})+\sum_{j=0}^{k}w_{i,j}^*\int_{x_{i,j}}^{x_{i,j+1}}(\alpha v)_x\\ \label{SVDG}
  &=&\left(v_t+(\alpha v)_x,w^*\right)_i+w_{i,0}^*(\alpha v^{+}-\alpha \widehat{v})|_{i-\frac{1}{2}}+w_{i,k}^*(\alpha\widehat{v}-\alpha v^-)|_{i+\frac{1}{2}}.
\end{eqnarray}
 On the other hand,   a direct calculation from
 \eqref{wij} and \eqref{wik} yields
\begin{eqnarray}\nonumber
  (v,w^*)_i&=&\sum_{j=0}^k\int_{x_{x,j}}^{x_{i,j+1}} vw_{i,j}^*\text{d}x=\sum_{j=0}^kw_{i,j}^*(\partial_x^{-1}v|_{i,j+1}-\partial_x^{-1}v|_{i,j})\\ \nonumber
  &=& (w\partial_x^{-1}v)^-|_{i+\frac{1}{2}}-(w\partial_x^{-1}v)^+|_{i-\frac{1}{2}}-\sum_{j=1}^{k}\partial_x^{-1}v|_{i,j}(w_{i,j}^*-w_{i,j-1}^*)-\sum_{j=0,k+1}A_{i,j}w_x\partial_x^{-1}v|_{i,j}\\ \nonumber
  &=& w_{i+\frac{1}{2}}^-V^-|_{i+\frac{1}{2}}-w_{i-\frac{1}{2}}^+V^+|_{i-\frac{1}{2}}-\sum_{j=0}^{k+1}\partial_x^{-1}v|_{i,j}A_{i,j}w_x(x_{i,j})\\ \nonumber
  &=& \int_{x_{i-\frac{1}{2}}}^{x_{i+\frac{1}{2}}}(\partial_x^{-1}vw)_x\text{d}x-\int_{x_{i-\frac{1}{2}}}^{x_{i+\frac{1}{2}}}\partial_x^{-1}vw_x\text{d}x+R_i[\partial_x^{-1}vw_x],
\end{eqnarray}
where $R_i[f]$ denotes the residual between the exact integral of $f$ in  the element $\mathbf{V}_i$ and its numerical quadrature $Q_k[f]$.
Consequently,
\begin{equation}\label{vv}
  (v,w^*)_i=(v,w)_i+R_i[\partial_x^{-1}v\partial_x w].
\end{equation}
 Substituting \eqref{vv} into \eqref{SVDG} leads to the desired result \eqref{SVALterBF}.
\end{proof}

 We next provide a comparison
  between the SV scheme \eqref{SVBFscheme} and the discontinuous galerkin (DG) scheme, which is of great importance in our later stability analysis.
  The bilinear form of the
  DG  method  is defined by
  \begin{equation*}
  a^{DG}_h(v,w)=\sum_{i=1}^N a^{DG}_{h,i}(v,w),~\forall v,w\in\mathcal{H}_h,
\end{equation*}
and
\begin{equation}\label{DG BF}
a^{DG}_{h,i}(v,w)=(v_t,w)_i-(\alpha v,w_x)_i+\alpha \widehat{v}w^-|_{i+\frac{1}{2}}-\alpha \widehat{v}w^+|_{i-\frac{1}{2}}.
\end{equation}
Applying integration by part yields
\begin{equation}\label{difference}
  a^{DG}_{h,i}(v,w)=\left(v_t+(\alpha v)_x,w\right)_i+w_{i-\frac{1}{2}}^+(\alpha v^{+}-\alpha \widehat{v})|_{i-\frac{1}{2}}+w_{i+\frac{1}{2}}^-(\alpha\widehat{v}-\alpha v^-)|_{i+\frac{1}{2}}.
\end{equation}
Consequently,
\begin{equation}\label{RRSVbasi1}
  a_{h,i}(v,w^*)= a_{h,i}^{DG}(v,w)+R_i[(V+\alpha v)w_x]+\bar{R}_i[v,w],v\in V_h.
\end{equation}
where $V=\partial_x^{-1}v_t$ and
\begin{equation*}
  \bar{R}_i[v,w]=(w_{i,0}^*-w_{i-\frac{1}{2}}^+)(\alpha v^{+}-\alpha \widehat{v})|_{i-\frac{1}{2}}+(w_{i,k}^*-w_{i+\frac{1}{2}}^-)(\alpha\widehat{v}-\alpha v^-)|_{i+\frac{1}{2}}.
\end{equation*}

\subsection{Energy  stability}
To study the $L^2$ energy stability  for both LSV and RSV methods, we first estimate the  reminder term appeared in \eqref{RRSVbasi1}.
\begin{lmm}
For any $v\in V_h$, the reminder of  Radau-point or Gauss numerical quadrature satisfies
\begin{equation}\label{Ri}
\left|R_i[\alpha v v_x]\right|\lesssim \|v\|_{0,\mathbf{V}_i}^2,
\end{equation}
provided that $\alpha'\in C^0(\mathbf{V}_i).$  Furthermore, there holds for both LSV and  RSV methods
\begin{eqnarray}\label{Ri1}
 | \bar{R}_i[v,v]|  \lesssim  \|v\|_{0,\mathbf{V}_{i}}(\|v\|_{0,\mathbf{V}_{i}}+\|v\|_{0,\mathbf{V}_{i+1}}+\|v\|_{0,\mathbf{V}_{i-1}}).
\end{eqnarray}

\end{lmm}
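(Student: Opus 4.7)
The plan is to treat the two inequalities separately.

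For \eqref{Ri}, the decisive fact is that $vv_x\in\mathcal{P}_{2k-1}(\mathbf{V}_i)$, a degree for which all three quadrature rules in play---Gauss--Legendre (exact up to degree $2k-1$) and right/left Radau (exact up to degree $2k$)---are exact. I would therefore subtract a constant from $\alpha$ to isolate its ``bad'' variable part. With $\alpha_0:=\alpha(x_{i-\frac12})$ one has $R_i[\alpha_0 vv_x]=0$, hence $R_i[\alpha vv_x]=R_i[(\alpha-\alpha_0)vv_x]$. The crude estimate $|R_i[f]|\lesssim h\|f\|_{\infty,\mathbf{V}_i}$, combined with the Lipschitz bound $\|\alpha-\alpha_0\|_{\infty,\mathbf{V}_i}\lesssim h\|\alpha'\|_{\infty,\mathbf{V}_i}$ and the standard inverse inequalities $\|v\|_{\infty,\mathbf{V}_i}\lesssim h^{-1/2}\|v\|_{0,\mathbf{V}_i}$, $\|v_x\|_{\infty,\mathbf{V}_i}\lesssim h^{-3/2}\|v\|_{0,\mathbf{V}_i}$, then collapses everything to $|R_i[\alpha vv_x]|\lesssim h\cdot h\cdot h^{-1/2}\cdot h^{-3/2}\|v\|_{0,\mathbf{V}_i}^2=\|v\|_{0,\mathbf{V}_i}^2$.

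For \eqref{Ri1}, the recurrences \eqref{wij}--\eqref{wik} give
\[
  w_{i,0}^* - w_{i-\frac12}^+ = A_{i,0}\,w_x(x_{i-\frac12}^+),\qquad w_{i,k}^* - w_{i+\frac12}^- = -A_{i,k+1}\,w_x(x_{i+\frac12}^-).
\]
For LSV, $A_0=A_{k+1}=0$, so $\bar R_i[v,v]\equiv 0$ and the bound is trivial. For RSV, exactly one of these weights vanishes---$A_0$ if right Radau is used on $\mathbf{V}_i$, $A_{k+1}$ if left Radau is used---so only one summand in $\bar R_i$ survives. I would then argue that whenever that surviving summand is also nonzero, the Radau-selection rule together with the upwind flux forces $|\alpha|$ to be small at the active interface. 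For example, on a right-Radau cell the surviving summand carries the factor $(\alpha\widehat v - \alpha v^-)|_{i+\frac12}$; if $\alpha(x_{i+\frac12})>0$ this vanishes by definition of $\widehat v$, so nontrivially $\alpha(x_{i+\frac12})\le 0$. But right Radau was chosen on $\mathbf{V}_i$, which under the selection rule of \S 2 then forces the ``otherwise'' branch with a sign change of $\alpha$ across $\mathbf{V}_i$. Continuity of $\alpha$ gives a zero $x^*\in\mathbf{V}_i$, whence $|\alpha(x_{i+\frac12})|=|\alpha(x_{i+\frac12})-\alpha(x^*)|\le\|\alpha'\|_{\infty,\mathbf{V}_i}h_i\lesssim h$. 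The left-Radau case is symmetric, producing $\|v\|_{0,\mathbf{V}_{i-1}}$ instead of $\|v\|_{0,\mathbf{V}_{i+1}}$ through the jump at $x_{i-\frac12}$.

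The main obstacle is precisely recovering this extra factor of $h$. Using only the inverse inequality $|w_x(x_{i+\frac12}^-)|\lesssim h^{-3/2}\|v\|_{0,\mathbf{V}_i}$ and the trace-type jump bound $|[v](x_{i+\frac12})|\lesssim h^{-1/2}(\|v\|_{0,\mathbf{V}_i}+\|v\|_{0,\mathbf{V}_{i+1}})$, together with $|A_{i,k+1}|\lesssim h$, the surviving summand is controlled by $h^{-1}|\alpha(x_{i+\frac12})|\,\|v\|_{0,\mathbf{V}_i}(\|v\|_{0,\mathbf{V}_i}+\|v\|_{0,\mathbf{V}_{i+1}})$, an $h^{-1}$ blow-up that must be cancelled by the degeneracy estimate $|\alpha|\lesssim h$ above. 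Carrying out this case analysis cleanly across all sign patterns of $\{\alpha(x_{i-\frac12}),\alpha(x_{i+\frac12})\}$ and both choices of Radau rule is the delicate heart of the proof; once done, the stated bound follows immediately.
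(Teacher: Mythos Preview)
Your proposal is correct and follows essentially the same approach as the paper. For \eqref{Ri} the paper subtracts the cell average $\bar\alpha_i$ (rather than the endpoint value $\alpha(x_{i-\frac12})$) and bounds the two pieces via Cauchy--Schwarz and a discrete quadrature-norm equivalence instead of your cruder $L^\infty$/inverse-inequality route, while for \eqref{Ri1} the paper organizes the case analysis by the sign of the product $\alpha(x_{i-\frac12})\alpha(x_{i+\frac12})$ rather than by which Radau rule is in force---but the decisive mechanism (the upwind flux and Radau selection rule together force a zero of $\alpha$ in $\overline{\mathbf V_i}$ whenever the surviving summand is nonzero, supplying the missing factor of $h$) is exactly what you identified.
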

\begin{proof}Let   $\bar{\alpha}_i$ be the cell average of $\alpha$ on $\mathbf{V}_i$.   Then

\begin{eqnarray}\nonumber
  R_i[\alpha v v_x]
   &=& \int_{x_{i-\frac{1}{2}}}^{x_{i+\frac{1}{2}}}\alpha v v_x\text{d}x-\sum_{j=0}^{k+1}A_{i,j}\left[\alpha(x_{i,j})-\bar{\alpha}_i\right]v(x_{i,j}) v_x(x_{i,j})-\sum_{j=0}^{k+1}A_{i,j}\bar{\alpha}_iv(x_{i,j}) v_x(x_{i,j})\\ \label{remainder}
   &=&\int_{x_{i-\frac{1}{2}}}^{x_{i+\frac{1}{2}}}(\alpha- \bar{\alpha}_i)v v_x\text{d}x-\sum_{j=0}^{k+1}A_{i,j}\left[\alpha(x_{i,j})-\bar{\alpha}_i\right]v(x_{i,j}) v_x(x_{i,j}): =R_{i,1}-R_{i,2}.
\end{eqnarray}
Here in the last step, we use that the Radau or Gauss numerical quadrature is exact for $2k-1$ degree polynomials. By the Cauchy-Schwarz inequality and inverse inequality, we obtain that
\begin{equation}\label{remainder R1}
  |R_{i,1}|\lesssim h_i\|\alpha\|_{0,\infty,\mathbf{V}_i}\|v\|_{0,\mathbf{V}_i}\|v_x\|_{0,\mathbf{V}_i}\lesssim\|v\|_{0,\mathbf{V}_i}^2.
\end{equation}
 Since the right or left Radau numerical quadrature is exact for polynomial of degree of $2k$, we have for Radau numerical quadrature that
\[
 \left(\sum_{j=0}^{k+1}A_{i,j}v^2(x_{i,j})\right)^{1/2}= \|v\|_{0,\mathbf{V}_i}.
\]
 While for Gauss  numerical quadrature, there holds
\begin{equation*}
 \left(\sum_{j=0}^{k+1}A_{i,j}v^2(x_{i,j})\right)^{1/2}=\left(\|v\|_{0,\mathbf{V}_i}^2-R_i[v^2]\right)^{1/2}
\end{equation*}
with $R_i[v^2]$ the Gauss numerical quadrature error, i.e.,
\begin{equation*}
  R_i[v^2]=\bar{h}_i^{2k} \frac{2^{2k+1}(k!)^2}{2(2k+1)[(2k)!]^3}\|v^{(k)}\|_{0,\mathbf{V}_i}\lesssim \|v\|_{0,\mathbf{V}_i}^2,
\end{equation*}
 Consequently, for both Radau or Gauss numerical quadrature, there holds
\begin{equation}\label{remainder R2}
  |R_{i,2}|\lesssim h_i\|\alpha_x\|_{0,\infty,\mathbf{V}_i}\left(\sum_{j=0}^{k+1}A_{i,j}v^2(x_{i,j})\right)^{1/2}\left(\sum_{j=0}^{k+1}A_{i,j}v_x^2(x_{i,j})\right)^{1/2}\lesssim\|v\|_{0,\mathbf{V}_i}^2.
\end{equation}
Substituting  \eqref{remainder R1} and \eqref{remainder R2} into  \eqref{remainder} yields the desired result
 \eqref{Ri}.

 For the   term $\bar{R}_i[v,v]$,
if $\alpha(x_{i-\frac{1}{2}})\alpha(x_{i+\frac{1}{2}})\ge 0$,  then a direct calculation yields that $\bar{R}_i[v,v]=0$;
if $\alpha(x_{i-\frac{1}{2}})\alpha(x_{i+\frac{1}{2}})<0$,  then there exists at least one point $\eta_i\in \mathbf{V}_i$ satisfying $\alpha(\eta_i)=0$ and thus
$\|\alpha\|_{0,\infty,V_i}\lesssim h$.    Consequently,  we have from  \eqref{wij} and \eqref{wik} that
\begin{eqnarray}\nonumber
 | \bar{R}_i[v,v]|&\lesssim & h \|v_x\|_{0,\infty,\mathbf{V}_i}\|\alpha\|_{0,\infty, \mathbf{V}_i}(  |[v]_{i-\frac{1}{2}}|+|[v]_{i+\frac{1}{2}}|) \\ \nonumber
  &\lesssim & \|v\|_{0,\mathbf{V}_{i}}(\|v\|_{0,\mathbf{V}_{i}}+\|v\|_{0,\mathbf{V}_{i+1}}+\|v\|_{0,\mathbf{V}_{i-1}}),
\end{eqnarray}
  where in the last step, we have used the inverse inequality.  Then \eqref{Ri1} follows.
 \end{proof}

  To study the stability of  the LSV method,
we   still need to  introduce a new norm $|\|\cdot|\|$ which is equivalent with the $L^2$-norm.
The norm $|\|\cdot|\|$ is defined by
\begin{equation}\label{newnorm}
  |\|v|\|^2=\sum_{i=1}^N|\|v|\|_i^2,~|\|v|\|_i^2=(v,v^*)_i, \forall v\in V_h.
\end{equation}
where $v^*=Tv$.
In light of \eqref{vv}, we easily obtain
 \begin{equation}\label{vv1}
  |\|v|\|_i^2=(v,v^*)_i=(v,v)_i+R_i[\partial_x^{-1}v\partial_x v].
\end{equation}
 Using the error of Gauss numerical quadrature, we have
\begin{equation*}
  R_i[\partial_x^{-1}v\partial_x v]=\bar{h}_i^{2k+1} \frac{2^{2k+1}(k!)^4}{(2k+1)[(2k)!]^3}\frac{d^{2k}}{x^{2k}}(\partial_x^{-1}vv_x)=\bar{h}_i^{2k} \frac{2^{2k+1}k(k!)^2}{(2k+2)(2k+1)[(2k)!]^3}(v^{(k)},v^{(k)})_i.
\end{equation*}
By using the inverse inequality, we have $0\le R_i[\partial_x^{-1}v\partial_x v] \lesssim \|v\|_{0,\mathbf{V}_i}$. Thus the norm $|\|\cdot|\|$ defined in \eqref{newnorm} is equivalent with the $L^2$-norm.

Now we are ready to show the stability result of the two SV methods.

\begin{thm}\label{Energystablity}
Let $a_{h}(\cdot,\cdot)$ be the SV bilinear form defined in \eqref{SV bilinear form}, and $u_h$ be the solution of  \eqref{SVBFscheme}.  Then   for  both RSV and LSV methods,
\begin{equation}\label{stablityRRSV}
 \frac{1}{2}\frac{d}{dt}\|u_h\|^2_E\lesssim -\frac{1}{2}(\alpha_xu_h,u_h)+a(u_h,u_h^*)+\|u_h\|_{0}^2.
\end{equation}
Here $\|u_h\|_E=\|u_h\|_0$ for RSV method and $\|u_h\|_E=\||u_h\||$ for LSV methods.
 Consequently, both RSV and LSV methods are stable in the sense that
\begin{equation}\label{stablityRRSV1}
  \|u_h(\cdot,t)\|_E\lesssim   \|u_h(\cdot,0)\|_E,\ \ t\in (0,T].
\end{equation}

\end{thm}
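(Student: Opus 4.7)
The plan is to apply Theorem \ref{SVBAF} with $v=w=u_h$, sum over the elements, and reorganize the resulting identity into a non-negative principal part plus lower-order remainders. Set $V=\partial_x^{-1}(u_h)_t$ and $u_h^*=Tu_h$. Integration by parts on $(\alpha u_h)_x u_h=\alpha u_h(u_h)_x+\alpha_x u_h^2$, together with periodic summation, converts the identity of Theorem \ref{SVBAF} into
\begin{equation*}
a_h(u_h,u_h^*)=\frac{1}{2}\frac{d}{dt}\|u_h\|_0^2+\frac{1}{2}(\alpha_x u_h,u_h)+\mathcal{J}+\sum_i R_i\bigl[(V+\alpha u_h)(u_h)_x\bigr],
\end{equation*}
where $\mathcal{J}$ collects, at every interface $x_{j+\frac{1}{2}}$, the integration-by-parts jump $-\tfrac{1}{2}\alpha\bigl((u_h^+)^2-(u_h^-)^2\bigr)$ together with the two boundary contributions $w_{j,k}^*(\alpha\widehat{u}_h-\alpha u_h^-)$ and $w_{j+1,0}^*(\alpha u_h^+-\alpha\widehat{u}_h)$ inherited from Theorem \ref{SVBAF}.

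The heart of the argument is to show $\mathcal{J}\ge -C\|u_h\|_0^2$. At an interface with $\alpha>0$, the upwind rule \eqref{eqn:flux} gives $\widehat{u}_h=u_h^-$, so the $w_{j,k}^*$ contribution vanishes and the $w_{j+1,0}^*$ contribution reduces to $w_{j+1,0}^*\alpha[u_h]$. Combined with the integration-by-parts jump, this collapses to $\tfrac{1}{2}\alpha[u_h]^2+(w_{j+1,0}^*-u_h^+)\alpha[u_h]$; the first term is $\tfrac{1}{2}|\alpha|[u_h]^2\ge 0$, and the second vanishes for LSV (since $A_0=0$) and for RSV whenever the cell $\mathbf{V}_{j+1}$ uses right Radau points (again $A_0=0$). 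An analogous calculation works for $\alpha<0$ using left Radau points. The only residual contribution comes from transitional cells, in which the Radau prescription cannot be made to annihilate the correction at both endpoints; but then $\alpha$ must vanish somewhere inside $\mathbf{V}_i$, so smoothness gives $\|\alpha\|_{0,\infty,\mathbf{V}_i}\lesssim h$, and two applications of the inverse inequality bound the extra term by $\|u_h\|_{0,\mathbf{V}_i}^2+\|u_h\|_{0,\mathbf{V}_{i\pm 1}}^2$. Summing over the finitely many transitional interfaces leaves only an $O(\|u_h\|_0^2)$ penalty.

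It remains to absorb $\sum_i R_i[(V+\alpha u_h)(u_h)_x]$. The factor $R_i[\alpha u_h(u_h)_x]$ is already bounded by \eqref{Ri}. For the remaining piece $R_i[V(u_h)_x]$, the integrand is a polynomial of degree $2k$: Radau quadrature integrates it exactly, so for RSV this residual vanishes and one may take $\|\cdot\|_E=\|\cdot\|_0$. Gauss quadrature does not, so for LSV one works instead with the equivalent norm $\||\cdot\||$; the needed identity is
\begin{equation*}
\frac{1}{2}\frac{d}{dt}\||u_h\||^2=\frac{1}{2}\frac{d}{dt}\|u_h\|_0^2+\sum_i R_i[V(u_h)_x],
\end{equation*}
obtained by differentiating $\||u_h\||_i^2=\|u_h\|_{0,\mathbf{V}_i}^2+R_i[\partial_x^{-1}u_h(u_h)_x]$ in $t$ and checking, via the explicit leading-coefficient form of the Gauss error \eqref{quadraturRemainder}, that $R_i[V(u_h)_x]=R_i[\partial_x^{-1}u_h(u_h)_{xt}]$. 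Substituting this identity cancels the Gauss residual exactly and yields \eqref{stablityRRSV} for both methods.

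Finally, to obtain \eqref{stablityRRSV1} one sets $g=0$ in \eqref{SVBFscheme}, so that $a_h(u_h,u_h^*)=0$, bounds $|(\alpha_x u_h,u_h)|\lesssim\|u_h\|_0^2\lesssim\|u_h\|_E^2$ using smoothness of $\alpha$ and the equivalence of $\||\cdot\||$ with $\|\cdot\|_0$, and closes the loop with Gronwall's inequality. The principal obstacle throughout is the interface regrouping in the second paragraph: one must verify the sign cancellation in every possible combination of upwind direction and Radau prescription, and absorb the transitional-cell remainder via the crucial smallness $\|\alpha\|_{0,\infty,\mathbf{V}_i}\lesssim h$ granted by the vanishing of $\alpha$.
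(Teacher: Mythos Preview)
Your proof is correct and uses the same essential ingredients as the paper; the difference is purely organizational. The paper routes the argument through the DG method: it cites the known DG energy inequality $(v_t,v)_i \le a_{h,i}^{DG}(v,v) - \tfrac{1}{2}(\alpha_x v,v)_i$ from \cite{Cao-Shu-ZhangM2AN}, invokes the comparison formula \eqref{RRSVbasi1} expressing $a_{h,i}(v,v^*)$ as $a_{h,i}^{DG}(v,v)$ plus the quadrature remainder $R_i[(V+\alpha v)v_x]$ and the boundary correction $\bar R_i[v,v]$, and then applies Lemma~3.2 (i.e.\ \eqref{Ri}--\eqref{Ri1}) to bound those remainders. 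You instead expand directly from Theorem~\ref{SVBAF} and carry out the interface analysis of your term $\mathcal J$ by hand; that case analysis is exactly the content of the DG inequality combined with the $\bar R_i$ estimate \eqref{Ri1}. Both routes rely on the same three facts: the upwind sign produces the nonnegative term $\tfrac{1}{2}|\alpha|[u_h]^2$; the vanishing of $A_0$ (or $A_{k+1}$) kills the correction $(w_{i,0}^*-u_h^+)$ (or $(w_{i,k}^*-u_h^-)$) in non-transitional cells; and $\|\alpha\|_{0,\infty,\mathbf V_i}\lesssim h$ absorbs the leftover in transitional cells. Your treatment of $R_i[V(u_h)_x]$ --- zero for RSV, absorbed into $\||\cdot\||$ for LSV via differentiation of \eqref{vv1} --- is identical to the paper's. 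The paper's route is shorter because the interface work is offloaded to a citation; yours is more self-contained.
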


\begin{proof} First, we use the energy inequality of the DG method in
\cite{Cao-Shu-ZhangM2AN}   to get
\begin{equation}\label{stablityDG}
  (v_t,v)_i\leq a_{h,i}^{DG}(v,v)-\frac{1}{2}(\alpha_x v,v)_i, \forall v\in V_h.
\end{equation}
Second, by choosing $w=v$ in \eqref{RRSVbasi1}, we have
\begin{equation}\label{RRSVbasi}
  a_{h,i}(v,v^*)= a_{h,i}^{DG}(v,v)+R_i[\partial_x^{-1}vv_t]+R_i[\alpha vv_x]+\bar{R}_i[v,v],v\in V_h.
\end{equation}
Then we conclude from \eqref{Ri} and \eqref{Ri1} that
Therefore,
\begin{equation}\label{RRSVbasionElement}
  a^{DG}_{h,i}(v,v)+ R_i[\partial_x^{-1}vv_t]\lesssim a_{h,i}(v,v^*)+\|v\|_{0,\mathbf{V}_{i}}(\|v\|_{0,\mathbf{V}_{i}}+\|v\|_{0,\mathbf{V}_{i+1}}+\|v\|_{0,\mathbf{V}_{i-1}}), \forall v\in V_h,
\end{equation}

As for the  term $R_i[\partial_x^{-1}vv_t]$ in \eqref{RRSVbasi},  noticing that
$v_x\partial_x^{-1}v_t\in \mathcal{P}_{2k}$, we use the property of Radau-point numerical quadrature to get $R_i[v_x\partial_x^{-1}v_t]=0$ for RSV.
While for LSV, we have from \eqref{vv1} that
\[
  R_i[v_x\partial_x^{-1}v_t]=(v_t,v^*)_i-(v_t,v)_i.
\]
 which yields, together with \eqref{stablityDG} and \eqref{RRSVbasionElement} that
\begin{eqnarray}\label{RRSVbasionAll}
  && (v_t,v)\lesssim a_{h}(v,v^*)-\frac{1}{2}(\alpha_x v,v)+\|v\|_0^2, \ {\rm for\ RSV},\\\label{LSVbasionAll}
  &&  (v_t,v^*)\lesssim a_{h}(v,v^*)-\frac{1}{2}(\alpha_x v,v)+\|v\|_0^2, \ {\rm for\ LSV}.
\end{eqnarray}
Then \eqref{stablityRRSV} follows by choosing $v=u_h$.
By   applying Gronwall inequality and the fact that $a_{h}(u_h,v^*)=0$ for all $v\in V_h$,
we obtain the  stability result \eqref{stablityRRSV1}  directly.
\end{proof}

\subsection{Flux function stability}

Following the strategy presented in \cite{Cao-Shu-ZhangM2AN}, we would like to study another stability inequality,  i.e., the stability for the flux function $\alpha u_h$.
 Towards this end,  we first define  $\tilde{\alpha}_i^2:=\alpha^2(x_{i-\frac{1}{2}})+\alpha^2(x_{i+\frac{1}{2}})$ and let
\begin{equation*}
A_{h,i}(v,w^*)=\tilde{\alpha}_i^2a_{h,i}(v,w^*),~A_h(v,w^*)=\sum_{i=1}^NA_{h,i}(v,w^*),v,w\in H_h^1.
\end{equation*}
Correspondingly, we can define $\|\cdot\|_\alpha$ and $|\|\cdot|\|_\alpha$ norms as follows:
\begin{equation*}
  \|v\|_\alpha^2: =\sum_{i=1}^N\tilde{\alpha}_i^2\|v\|_{0,\mathbf{V}_i}^2~\text{and}~|\|v|\|_\alpha^2: =\sum_{i=1}^N\tilde{\alpha}_i^2|\|v|\|_{\mathbf{V}_i}^2.
\end{equation*}
  It has been proved in \cite{Cao-Shu-ZhangM2AN} that
\begin{equation}\label{new stablity for DG}
  \frac{1}{2}\frac{d}{dt}\|v\|_{\alpha}^2\lesssim |A_h^{DG}(v,v)|+\|v\|_{\alpha}^2, v\in V_h,
\end{equation}
where
\begin{equation*}
A_{h,i}^{DG}(v,w)=\tilde{\alpha}_i^2a_{h,i}^{DG}(v,w),~A_h^{DG}(v,w)=\sum_{i=1}^NA^{DG}_{h,i}(v,w),v,w\in H_h^1.
\end{equation*}

 Following the same argument as what we did in Theorem \ref{Energystablity},
 we have the following stability result for the flux function.

\begin{thm}
 Both the RSV and LSV methods are stable in the flux function norm. That is, there holds for RSV
\begin{equation}\label{new stablity for RSV}
 \frac{1}{2} \frac{d}{dt}\|v\|_{\alpha}^2\lesssim |A_h(v,v^*)|+\|v\|_{\alpha}^2, \ \forall v\in V_h,
\end{equation}
and for the LSV scheme
\begin{equation}\label{new stablity for LSV}
 \frac{1}{2}\frac{d}{dt}|\|v|\|_{\alpha}^2\lesssim |A_h(v,v^*)|+\||v\||_{\alpha}^2,\ \ \forall v\in V_h.
\end{equation}
\end{thm}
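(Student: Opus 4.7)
The plan is to mimic the argument used for Theorem \ref{Energystablity}, but carried out on the weighted bilinear form $A_{h,i}=\tilde{\alpha}_i^2 a_{h,i}$ and the weighted norms $\|\cdot\|_\alpha$, $|\|\cdot|\|_\alpha$. The starting point is the weighted DG stability \eqref{new stablity for DG}, together with the decomposition \eqref{RRSVbasi1} relating the SV form to the DG form plus the two remainder terms $R_i[(V+\alpha v)w_x]$ and $\bar{R}_i[v,w]$.

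First, I would test \eqref{RRSVbasi1} against $w=v$ and multiply by $\tilde{\alpha}_i^2$, yielding
\begin{equation*}
A_{h,i}^{DG}(v,v)+\tilde{\alpha}_i^2 R_i[\partial_x^{-1}v\, v_t]
= A_{h,i}(v,v^*)-\tilde{\alpha}_i^2 R_i[\alpha v v_x]-\tilde{\alpha}_i^2 \bar{R}_i[v,v].
\end{equation*}
The next step is to upgrade the remainder bounds \eqref{Ri} and \eqref{Ri1} to their weighted versions. Bounding $\tilde{\alpha}_i^2|R_i[\alpha v v_x]|\lesssim \tilde{\alpha}_i^2\|v\|_{0,\mathbf V_i}^2$ follows verbatim from Lemma 3.2, so it contributes at most $\|v\|_\alpha^2$ after summation. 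For the interface remainder $\bar{R}_i[v,v]$, the issue is that its estimate involves $\|v\|_{0,\mathbf V_{i\pm 1}}$, which is weighted by $\tilde{\alpha}_i^2$ rather than $\tilde{\alpha}_{i\pm 1}^2$. I would use the smoothness of $\alpha$ and the shape-regularity of the mesh to replace $\tilde{\alpha}_i^2$ by $\tilde{\alpha}_{i\pm 1}^2+O(h)$, which re-absorbs these terms into $\|v\|_\alpha^2$.

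Then, as in the proof of Theorem \ref{Energystablity}, I would split into the two cases according to the quadrature. For the RSV method, the integrand $v_x\partial_x^{-1}v_t$ lies in $\mathcal{P}_{2k}$, so the Radau quadrature is exact and $R_i[\partial_x^{-1}v v_t]=0$; summing the weighted identity over $i$ and invoking \eqref{new stablity for DG} gives
\begin{equation*}
\frac{1}{2}\frac{d}{dt}\|v\|_\alpha^2
\lesssim |A_h^{DG}(v,v)|+\|v\|_\alpha^2
\lesssim |A_h(v,v^*)|+\|v\|_\alpha^2,
\end{equation*}
which is \eqref{new stablity for RSV}. For the LSV method, the identity \eqref{vv1} applied element-wise yields $R_i[v_x\partial_x^{-1}v_t]=(v_t,v^*)_i-(v_t,v)_i$, and multiplying by $\tilde{\alpha}_i^2$ converts the left side of the weighted DG inequality into $\frac{1}{2}\frac{d}{dt}|\|v|\|_\alpha^2$ plus controllable terms; after summation this delivers \eqref{new stablity for LSV}.

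The main technical obstacle I expect is the boundary remainder $\bar{R}_i[v,v]$: in the case $\alpha(x_{i-\frac12})\alpha(x_{i+\frac12})<0$, the weight $\tilde{\alpha}_i^2$ is of order $h^2$ at the turning point while the neighboring weights $\tilde{\alpha}_{i\pm 1}^2$ need not be, so one must argue carefully that $\tilde{\alpha}_i^2\|v\|_{0,\mathbf V_{i\pm 1}}\lesssim \tilde{\alpha}_{i\pm 1}\|v\|_{0,\mathbf V_{i\pm 1}}+(\text{lower order})$ using Taylor expansion of $\alpha$ about the turning point together with the shape-regularity. Everything else is a straightforward weighted repetition of the energy-norm argument and a final application of Gronwall's inequality (via the hypothesis $A_h(u_h,v^*)=0$ when specializing to the discrete solution).
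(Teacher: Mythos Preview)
Your proposal is correct and follows exactly the route the paper takes: the paper's own ``proof'' simply reads ``we omit the proof since it is similar to that of Theorem~\ref{Energystablity},'' and your plan is precisely that weighted repetition of the energy argument via \eqref{RRSVbasi1}, \eqref{new stablity for DG}, and Lemma~3.2. One small caution: your suggested fix $\tilde{\alpha}_i^2\|v\|_{0,\mathbf{V}_{i\pm 1}}\lesssim \tilde{\alpha}_{i\pm 1}\|v\|_{0,\mathbf{V}_{i\pm 1}}+(\text{lower order})$ is not quite the right inequality---the cleaner way is to keep the explicit factor $|\alpha(x_{i\pm 1/2})|$ in the bound for $\bar{R}_i$ (rather than absorbing it into $\|\alpha\|_{0,\infty,\mathbf{V}_i}\lesssim h$), note that $|\alpha(x_{i+1/2})|\le\min(\tilde{\alpha}_i,\tilde{\alpha}_{i+1})$, and use $\tilde{\alpha}_i\lesssim h$ at sign-change elements to balance the $h^{-1}$ from the inverse inequalities; this distributes one weight to each neighbor and closes the estimate without any ``$+O(h)$'' remainder that would leave an unweighted $\|v\|_0^2$ term.
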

Here we omit the proof since it is similar to that of Theorem \ref{Energystablity}.

\section{Optimal  error estimates for RSV and LSV methods} In this section, we focus our attention on the analysis of
 the optimal  error estimates for both RSV and LSV methods.  We begin with the introduction of some interpolation functions.

Given a function $\phi\in \mathcal{H}_h$, we denote by $I_h^{+}\phi, I_h^{-}\phi\in V_h$ the standard Lagrange interpolation of $\phi$, which satisfy the following $k+1$ conditions
on each volume-element $\mathbf{V}_i(i\in Z_N)$:
\begin{equation}\label{ interpolationL}
  ~~~~~I_h^{-}\phi(x_{i,j})=\phi(x_{i,j}),~j=1,2,\cdots,k+1,
\end{equation}
\begin{equation}\label{ interpolationR}
  I_h^{+}\phi(x_{i,j})=\phi(x_{i,j}),~j=0,1,\cdots,k.
\end{equation}
In addition, we also define the Lagrange interpolation $I^{\pm}_h$ as follows:
\begin{equation}\label{ interpolationLR}
  I_h^{\pm}\phi(x_{i-\frac{1}{2}})=\phi(x_{i-\frac{1}{2}}^{+}),I_h^{\pm}\phi(x_{i+\frac{1}{2}})=\phi(x_{i+\frac{1}{2}}^{-}),I_h^{\pm}\phi(x_{i,j})=\phi(x_{i,j}),~j\in \mathbb{Z}_{k-1}.
\end{equation}
The standard interpolation theory shows that
\begin{equation}\label{interpolation theory}
  \|I_h^{-}\phi-\phi\|_0+\|I_h^{+}\phi-\phi\|_0+\|I_h^{\pm}\phi-\phi\|_0\lesssim h^{k+1}|\phi|_{k+1}.
\end{equation}
 With the above three interpolation operators, we  define a particular interpolation $I_h\phi$ of $\phi$ as follows:
\begin{eqnarray}\label{Newinterpolation}
I_h\phi=\left \{
\begin{array}{ll}
I_h^{-}\phi,   &if~\alpha|_{i-\frac{1}{2}}\geq 0,\alpha|_{i+\frac{1}{2}}> 0,\\
I_h^{+}\phi,   &if~\alpha|_{i-\frac{1}{2}}\leq 0,\alpha|_{i+\frac{1}{2}}< 0, \\
I_h^{\pm}\phi, &otherwise,\\
\end{array}
\right.
\end{eqnarray}
 We have the  following property of $I_h\phi$.
\begin{lmm} For any  $\phi\in C^0[0,2\pi]$,  the  interpolation function $I_h\phi$
 is exact in numerical flux, i.e,
\begin{equation}\label{fluxproperty}
  \widehat{I_h\phi}|_{i+\frac{1}{2}}=\phi(x_{i+\frac{1}{2}}), \forall i\in \mathbb{Z}_N.
\end{equation}
\end{lmm}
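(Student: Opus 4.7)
The plan is to verify the identity $\widehat{I_h\phi}|_{i+\frac{1}{2}}=\phi(x_{i+\frac{1}{2}})$ by a direct case analysis on the sign of $\alpha(x_{i+\frac{1}{2}})$, using the definition \eqref{eqn:flux} of the upwind flux together with the three interpolation rules \eqref{ interpolationL}--\eqref{ interpolationLR} and the element-wise selection rule \eqref{Newinterpolation}. The guiding observation is that \eqref{Newinterpolation} is designed precisely so that, on every element $\mathbf{V}_i$, the interpolant chosen on $\mathbf{V}_i$ reproduces $\phi$ at whichever endpoint of $\mathbf{V}_i$ is selected by the upwind rule from the corresponding neighbouring interface. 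Matching this to the upwind flux \eqref{eqn:flux} will yield the claim.

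First I would split into two cases according to \eqref{eqn:flux}. If $\alpha(x_{i+\frac{1}{2}})>0$, then $\widehat{I_h\phi}|_{i+\frac{1}{2}}=(I_h\phi)^-|_{i+\frac{1}{2}}$, so only the interpolant chosen on $\mathbf{V}_i$ matters. Per \eqref{Newinterpolation}, on $\mathbf{V}_i$ we either use $I_h^-\phi$ (when $\alpha(x_{i-\frac{1}{2}})\ge 0$) or $I_h^{\pm}\phi$ (when $\alpha(x_{i-\frac{1}{2}})<0$). In the first subcase, since $x_{i+\frac{1}{2}}=x_{i,k+1}$ is an interpolation node for $I_h^-\phi$ by \eqref{ interpolationL}, we get $(I_h^-\phi)(x_{i+\frac{1}{2}}^-)=\phi(x_{i+\frac{1}{2}})$. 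In the second subcase, \eqref{ interpolationLR} gives $(I_h^{\pm}\phi)(x_{i+\frac{1}{2}}^-)=\phi(x_{i+\frac{1}{2}}^-)=\phi(x_{i+\frac{1}{2}})$ since $\phi$ is continuous.

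If instead $\alpha(x_{i+\frac{1}{2}})\le 0$, then $\widehat{I_h\phi}|_{i+\frac{1}{2}}=(I_h\phi)^+|_{i+\frac{1}{2}}$, which is determined by the interpolant chosen on $\mathbf{V}_{i+1}$. Applying \eqref{Newinterpolation} on $\mathbf{V}_{i+1}$: if $\alpha(x_{i+\frac{3}{2}})<0$ then we use $I_h^+\phi$, and $x_{i+\frac{1}{2}}=x_{i+1,0}$ is an interpolation node by \eqref{ interpolationR}, giving $(I_h^+\phi)(x_{i+\frac{1}{2}}^+)=\phi(x_{i+\frac{1}{2}})$; otherwise $\alpha(x_{i+\frac{3}{2}})\ge 0$, so $I_h^{\pm}\phi$ is used on $\mathbf{V}_{i+1}$, and \eqref{ interpolationLR} gives $(I_h^{\pm}\phi)(x_{i+\frac{1}{2}}^+)=\phi(x_{i+\frac{1}{2}}^+)=\phi(x_{i+\frac{1}{2}})$.

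There is no real difficulty here beyond careful bookkeeping of the strict/non-strict inequalities in \eqref{Newinterpolation}; the only point that requires a moment of attention is to verify that every sign configuration of $\alpha(x_{i-\frac{1}{2}}),\alpha(x_{i+\frac{1}{2}})$ (including the boundary case $\alpha(x_{i+\frac{1}{2}})=0$) is actually covered by exactly one branch of \eqref{Newinterpolation}, so that $I_h\phi$ is well-defined and the case analysis above is exhaustive. Once that is checked, combining the two cases yields \eqref{fluxproperty}.
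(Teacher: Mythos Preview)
Your proposal is correct and follows essentially the same approach as the paper's proof: a case split on the sign of $\alpha(x_{i+\frac{1}{2}})$, combined with the observation that on the relevant neighbouring element the selection rule \eqref{Newinterpolation} forces the interpolant to be either $I_h^-\phi$ or $I_h^{\pm}\phi$ (respectively $I_h^+\phi$ or $I_h^{\pm}\phi$), each of which reproduces $\phi$ at the upwind endpoint. Your write-up is in fact slightly more detailed than the paper's, since you explicitly track the subcases according to the sign at the other endpoint and flag the bookkeeping needed for the borderline case $\alpha(x_{i+\frac{1}{2}})=0$.
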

\begin{proof}
For any $i\in \mathbb{Z}_N$, by the definition of numerical flux and interpolation $I_h$, it holds that for $\alpha(x_{i+\frac{1}{2}})>0$,
\begin{equation*}
  \widehat{I_h\phi}|_{i+\frac{1}{2}}=I_h\phi(x_{i+\frac{1}{2}}^-)=I_h^-\phi(x_{i+\frac{1}{2}}^-)~\text{or}~I_h^{\pm}\phi(x_{i+\frac{1}{2}}^-)=\phi(x_{i+\frac{1}{2}}),
\end{equation*}
and for the other case $\alpha(x_{i+\frac{1}{2}})\leq 0$,
\begin{equation*}
  \widehat{I_h\phi}|_{i+\frac{1}{2}}=I_h\phi(x_{i+\frac{1}{2}}^+)=I_h^+\phi(x_{i+\frac{1}{2}}^+)~\text{or}~I_h^{\pm}\phi(x_{i+\frac{1}{2}}^+)=\phi(x_{i+\frac{1}{2}}),
\end{equation*}
which both implies that the numerical flux is exact.
\end{proof}

Recalling the  definition of $a_{h,i}(\cdot,\cdot)$  in \eqref{SV bilinear form on element}, we  divide $a_{h,i}(\cdot,\cdot)$ into two parts. That is, for all $ v\in \mathcal{H}_h,w^*\in\mathcal{V}_h,i\in \mathbb{Z}_N$,  denote
$a_{h,i}(v,w^*)=b_{i,1}(v,w^*)+b_{i,2}(v,w^*)$  with
\begin{eqnarray*}
 b_{i,1}(v,w^*)=\sum_{j=0}^kw_{i,j}^*\int_{x_{i,j}}^{x_{i,j+1}}v_t\text{d}x,\  \
b_{i,2}(v,w^*)=\sum_{j=0}^kw_{i,j}^*\left(\alpha \widehat{v}|_{i,j+1}-\alpha \widehat{v}|_{i,j}\right).
\end{eqnarray*}

\begin{lmm}
    Let $u(\cdot,t)\in H^{k+2}(\Omega)$ for any $t\in[0,T]$ be the solution of \eqref{eqn:para},   $I_hu$ be the interpolation function of $u$ defined in \eqref{Newinterpolation}. Then
\begin{equation}\label{Newinterpolationproperty}
  |b_{i,2}(u-I_hu,v^*)|\lesssim h^{k+\frac{3}{2}}\|u\|_{k+1,\infty}\|v\|_{0,\mathbf{V}_i}, \forall v\in V_h, i\in \mathbb{Z}_N.
\end{equation}
\end{lmm}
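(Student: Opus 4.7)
The plan is to exploit two properties of $I_h u$ in tandem: the flux-exactness at element interfaces established in the preceding lemma, and the Lagrange interpolation conditions at the interior partition points. Together they will collapse $b_{i,2}(u-I_h u,v^*)$ to at most a single residual term, whose smallness comes from a forced degeneracy of $\alpha$ on $V_i$.

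First I would rewrite $b_{i,2}(v,v^*)$ by Abel summation. Since any $v\in V_h$ is continuous inside $\mathbf{V}_i$, the upwind flux satisfies $\widehat{v}|_{i,j}=v(x_{i,j})$ at every interior partition point $j=1,\ldots,k$, and the telescoping yields
\begin{align*}
b_{i,2}(v,v^*)
= v_{i,k}^{*}\,\alpha\widehat{v}\big|_{i+\frac12}
- v_{i,0}^{*}\,\alpha\widehat{v}\big|_{i-\frac12}
- \sum_{j=1}^{k}\bigl(v_{i,j}^{*}-v_{i,j-1}^{*}\bigr)\alpha(x_{i,j})\,v(x_{i,j}).
\end{align*}
The recurrence \eqref{wij} further identifies $v_{i,j}^{*}-v_{i,j-1}^{*}=A_{i,j}v_x(x_{i,j})$ with $A_{i,j}\lesssim h$.

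Next I would specialize to $v=u-I_h u$. Since $u$ is smooth, $\widehat{u}|_{i\pm\frac12}=u(x_{i\pm\frac12})$, and by \eqref{fluxproperty} one has $\widehat{I_h u}|_{i\pm\frac12}=u(x_{i\pm\frac12})$, so the two boundary contributions cancel identically regardless of the sign of $\alpha$ at the interfaces. The interior sum is then analysed case-by-case according to \eqref{Newinterpolation}: if $I_h=I_h^{-}$ or $I_h=I_h^{+}$, the interpolation is exact at every $x_{i,j}$ for $j=1,\ldots,k$ and the whole expression vanishes; if instead $I_h=I_h^{\pm}$, exactness only holds for $j=1,\ldots,k-1$, leaving the single residual
\begin{align*}
b_{i,2}(u-I_h u,v^*) = -A_{i,k}\,v_x(x_{i,k})\,\alpha(x_{i,k})\,(u-I_h^{\pm}u)(x_{i,k}).
\end{align*}

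Finally, I would bound this residual. The decisive observation is that the $I_h^{\pm}$ branch of \eqref{Newinterpolation} is selected \emph{precisely} when $\alpha$ either vanishes at an endpoint of $\mathbf{V}_i$ or changes sign across $\mathbf{V}_i$; in either case $\alpha$ has a zero somewhere in $\overline{\mathbf{V}_i}$, so the smoothness of $\alpha$ forces $|\alpha(x_{i,k})|\lesssim h$. Combining this with $A_{i,k}\lesssim h$, the inverse estimate $|v_x(x_{i,k})|\lesssim h^{-3/2}\|v\|_{0,\mathbf{V}_i}$, and the standard Lagrange interpolation bound $|(u-I_h^{\pm}u)(x_{i,k})|\lesssim h^{k+1}\|u\|_{k+1,\infty}$ yields the claimed $h^{k+\frac32}$ estimate. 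The main subtlety, beyond the telescoping itself, is recognising that the half-order loss coming from having one fewer interpolation point in the $I_h^{\pm}$ case is exactly compensated by the extra power of $h$ provided by the degeneracy of $\alpha$; this matching is what makes the stated rate sharp, and it also explains why the tailored definition of $I_h$ in \eqref{Newinterpolation} is essential.
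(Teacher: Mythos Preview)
Your proposal is correct and follows essentially the same route as the paper: rewrite $b_{i,2}$ by Abel summation, kill the boundary terms via the flux-exactness \eqref{fluxproperty}, observe that the interior sum vanishes entirely for $I_h^{\pm}$ except at $j=k$, and then exploit that the $I_h^{\pm}$ branch forces a zero of $\alpha$ in $\overline{\mathbf{V}_i}$ to gain the extra power of $h$. The only cosmetic difference is that the paper's telescoped formula \eqref{newb2form} displays $w_{i+\frac12}^-$ and $w_{i-\frac12}^+$ rather than your $v_{i,k}^*$ and $v_{i,0}^*$; these agree for LSV and differ for RSV, but since the boundary contributions vanish identically once $v=u-I_hu$, either form leads to the same conclusion.
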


\begin{proof}
Rearranging the items of $b_{i,2}(v,w^*)$, we get
\begin{equation}\label{newb2form}
  b_{i,2}(v,w^*)=w_{i+\frac{1}{2}}^{-}\alpha \widehat{v}|_{i+\frac{1}{2}}-w_{i-\frac{1}{2}}^{+}\alpha\widehat{v}|_{i-\frac{1}{2}}-\sum_{j=1}^k\alpha(x_{i,j}) v(x_{i,j})A_{i,j}w_x(x_{i,j}).
\end{equation}
Choosing $v=u-I_hu$ in \eqref{newb2form} and using  \eqref{fluxproperty} yields
\begin{equation}
  b_{i,2}(u-I_hu,v^*)=-\sum_{j=1}^k\alpha(x_{i,j}) (u-I_hu)(x_{i,j})A_{i,j}v_x(x_{i,j}).
\end{equation}
If $I_h=I_h^-$ or $I_h^{+}$, we get immediately that $(u-I_hu)(x_{i,j})=0$ for all $ j\in \mathbb{Z}_k$, which implies $b_{i,2}(u-I_hu,v^*)=0$. If $I_h=I_h^{\pm}$,
 then  $\alpha_{i+\frac 12}\alpha_{i-\frac 12}<0$, which implies that
 there exists at least  a $\eta_i\in \mathbf{V}_i$ satisfying $\alpha(\eta_i)=0$.  Therefore,
\begin{equation*}
  b_{i,2}(u-I_hu,v^*)=(\alpha(x_{i,k})-\alpha(\eta_i)) (u-I_hu)(x_{i,k})A_{i,k}v_x(x_{i,k}).
\end{equation*}
   By using the approximation property of the interpolation function and the inverse inequality, we get
 \begin{equation}\label{Interproperty1}
  \left | b_{i,2}(u-I_hu,v^*)\right |\lesssim h^{k+3} \cdot \|u\|_{k+1,\infty} \cdot\|v\|_{1,\infty,\mathbf{V}_i}\lesssim h_i^{k+\frac{3}{2}}\|u\|_{k+1,\infty}\|v\|_{0,\mathbf{V}_i}.
\end{equation}
   This finishes our proof.
\end{proof}

Now, we are ready to give the optimal error estimate for $\|u_h-I_hu\|_0$.
\begin{thm}\label{ConvegenceForRRSV}
Let $u(\cdot,t)\in H^{k+2}$ for any $t\in[0,T]$ be the solution of \eqref{eqn:para}, $I_hu$ be the interpolation function of $u$   defined in \eqref{Newinterpolation}, and $u_h$ be the solution of the scheme \eqref{AlternativeSVBFscheme} with the initial solution $u_h(x,0)=I_hu_0$. Then for both the RSV and LSV methods,
\begin{equation}\label{L2ConvegenceForRRSV}
  \|(u-u_h)(\cdot,t)\|_0\lesssim h^{k+1} \sup_{\tau\in[0,t]}\|u(\cdot,\tau)\|_{k+2}.
\end{equation}
\end{thm}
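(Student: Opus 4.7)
The plan is to carry out a standard energy argument, splitting the error $u - u_h = \eta - \xi$ where $\eta = u - I_h u$ and $\xi = u_h - I_h u \in V_h$. The interpolation estimate \eqref{interpolation theory} immediately gives $\|\eta\|_0 \lesssim h^{k+1}\|u\|_{k+1}$, so by the triangle inequality it suffices to prove $\|\xi(\cdot,t)\|_0 \lesssim h^{k+1}\sup_\tau \|u(\cdot,\tau)\|_{k+2}$. Note that $\xi(\cdot,0)=0$ since $u_h(\cdot,0)=I_hu_0$.

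To bound $\xi$, I would apply the energy stability of Theorem \ref{Energystablity} to $\xi$ itself. Since $\xi\in V_h$, we have
\begin{equation*}
  \frac{1}{2}\frac{d}{dt}\|\xi\|_E^2 \lesssim |a_h(\xi,\xi^*)| + \|\xi\|_E^2,
\end{equation*}
where the drift term $(\alpha_x\xi,\xi)$ is absorbed using smoothness of $\alpha$ and the norm equivalence $\|\cdot\|_E\sim\|\cdot\|_0$. The Galerkin orthogonality \eqref{Galerkin orthogonality} gives $a_h(u-u_h,\xi^*)=0$, so $a_h(\xi,\xi^*)=a_h(I_hu-u_h,\xi^*)=-a_h(\eta,\xi^*)$, reducing everything to an estimate of $|a_h(\eta,\xi^*)|$.

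I would split $a_h(\eta,\xi^*)=\sum_i(b_{i,1}(\eta,\xi^*)+b_{i,2}(\eta,\xi^*))$. For the temporal part $b_{i,1}(\eta,\xi^*)=(\eta_t,\xi^*)_i$, a Cauchy--Schwarz together with the $L^2$-boundedness of $T$ (Lemma in Section 3.1) and the standard interpolation estimate applied to $\eta_t = u_t - I_h u_t$ yields
\begin{equation*}
  |b_1(\eta,\xi^*)| \lesssim \|\eta_t\|_0\,\|\xi^*\|_0 \lesssim h^{k+1}|u_t|_{k+1}\|\xi\|_0 \lesssim h^{k+1}\|u\|_{k+2}\|\xi\|_0,
\end{equation*}
where in the last step I use the equation $u_t=g-(\alpha u)_x$ to trade a time derivative for a spatial one. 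For the flux part, the key Lemma \eqref{Newinterpolationproperty} already gives $|b_{i,2}(\eta,\xi^*)|\lesssim h^{k+3/2}\|u\|_{k+1,\infty}\|\xi\|_{0,\mathbf{V}_i}$; summing over $i\in\mathbb{Z}_N$ by Cauchy--Schwarz and using $N\lesssim h^{-1}$ produces a loss of exactly $h^{-1/2}$, recovering the target order $h^{k+1}\|\xi\|_0$.

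The combined estimate $|a_h(\eta,\xi^*)|\lesssim h^{k+1}\|u\|_{k+2}\|\xi\|_0$, together with Young's inequality and the energy inequality above, yields
\begin{equation*}
  \frac{d}{dt}\|\xi\|_E^2 \lesssim \|\xi\|_E^2 + h^{2(k+1)}\sup_{\tau\in[0,t]}\|u(\cdot,\tau)\|_{k+2}^2,
\end{equation*}
and Gronwall's inequality combined with $\xi(\cdot,0)=0$ finishes the bound on $\xi$. The main obstacle is the $b_{i,2}$ term: naively this is an $O(h^{k+1})$ quantity multiplied by $h^{-1}\|\xi\|_{0,\mathbf{V}_i}$ coming from $\xi_x$ at interior quadrature nodes, which would produce only suboptimal $h^k$. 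The cleverness of the definition \eqref{Newinterpolation} of $I_h$ is precisely that when $\alpha$ keeps sign on $\mathbf{V}_i$, the interpolant matches $u$ at all interior nodes so $b_{i,2}(\eta,\xi^*)=0$; the only contribution comes from cells straddling a turning point, where $\alpha$ is $O(h)$ and supplies an extra half power of $h$ after summation. This delicate interplay between the numerical flux, the sign of $\alpha$, and the choice of interpolant is exactly what makes the optimal order possible for both RSV and LSV.
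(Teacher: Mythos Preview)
Your proposal is correct and follows essentially the same route as the paper: split the error through $I_hu$, apply the energy inequality \eqref{RRSVbasionAll}--\eqref{LSVbasionAll} (established in the proof of Theorem~\ref{Energystablity}) to $\xi=u_h-I_hu$, use Galerkin orthogonality to reduce to bounding $a_h(\eta,\xi^*)$, handle $b_{i,1}$ by interpolation theory and $b_{i,2}$ by the key Lemma \eqref{Newinterpolationproperty}, then Gronwall. One harmless slip: from $a_h(\eta-\xi,\xi^*)=0$ you get $a_h(\xi,\xi^*)=a_h(\eta,\xi^*)$, not $-a_h(\eta,\xi^*)$, but since you pass to absolute values it does not matter.
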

\begin{proof}
In each element $\mathbf{V}_i$, we have
\begin{equation}\label{134}
  a_{h,i}(u-I_hu,v_h^*)=b_{i,1}(u-I_hu,v_h^*)+b_{i,2}(u-I_hu,v_h^*)=(u_t-(I_hu)_t,v_h^*)+b_{i,2}(u-I_hu,v_h^*).
\end{equation}
By using \eqref{Tboundedproperty}, \eqref{Newinterpolationproperty},  and  the fact that $(I_hu)_t=I_hu_t$, we obtain that for any $v_h\in V_h$ and $i\in \mathbb{Z}_N$,
\begin{eqnarray}\nonumber
  |a_{h,i}(u-I_hu,v_h^*)|&\lesssim&\|u_t-I_hu_t\|_{0,\mathbf{V}_i}\|v_h^*\|_{0,\mathbf{V}_i}+h^{k+\frac{3}{2}}\|u\|_{k+1,\infty}\|v\|_{0,\mathbf{V}_i}\\ \label{124}
  &\lesssim& h_i^{k+1}(\|u_t\|_{k+1,\mathbf{V}_i}+h_i^{\frac{1}{2}}\|u\|_{k+1,\infty})\|v\|_{0,\mathbf{V}_i}.
\end{eqnarray}
Choosing $v=u_h-I_hu$ in \eqref{RRSVbasionAll} and using the Galerkin orthogonality \eqref{Galerkin orthogonality}, we have for RSV methods
\begin{eqnarray}\nonumber
  ((u_h-I_hu)_t, u_h-I_hu)
  &\lesssim& |a_{h}(u-I_hu, u_h-I_hu)|+\|u_h-I_hu\|_0^2\\
  &\lesssim& h^{k+1}\|u\|_{k+2}+\|u_h-I_hu\|_0^2.
\end{eqnarray}
  Here in the last step, we have used the fact $u_t=-u_x$ and  \eqref{124}.  Similarly,  for LSV methods,
   we take  $v=u_h-I_hu$ in \eqref{LSVbasionAll}  and use \eqref{124} to derive
 \begin{eqnarray}\nonumber
  ((u_h-I_hu)_t,(u_h-I_hu)^*)
  \lesssim  h^{k+1}\|u\|_{k+2}+\|u_h-I_hu\|_0^2.
\end{eqnarray}
 By applying the Gronwall inequality and initial value, we have for both the RSV and LSV that
 \[
     \|(u_h-I_hu)(\cdot,t)\|_0\lesssim h^{k+1}\|u\|_{k+2}
 \]
 Then the desired results follows from the
 the standard interpolation approximation \eqref{interpolation theory}.
\end{proof}

\section{Superconvergence  of RSV and LSV methods}
 This section is dedicated to the superconvergence analysis  for RSV and LSV methods.
 Both the superconvergence  properties of the flux function approximation and the solution itself approximation  will be investigated.

We begin with some preliminaries. First,
we divide the whole domain $\Omega$ into three parts $\Omega=\Omega_1\cup\Omega_2\cup\Omega_3$, where
\begin{equation*}
  \Omega_1=\{\mathbf{V}_i, \alpha(x_{i-\frac{1}{2}})>0,\alpha(x_{i+\frac{1}{2}})>0\},\Omega_2=\{\mathbf{V}_i, \alpha(x_{i-\frac{1}{2}})<0,\alpha(x_{i+\frac{1}{2}})<0\},\Omega_3=\Omega\setminus(\Omega_1\cup\Omega_2).
\end{equation*}
Second,  we define for all  $v,w\in H_h^1$,
\begin{equation}\label{BForm}
 b(v,w^*)=\sum_{i=1}^N b_i(v,w^*),~b_i(v,w^*)=(v_t,w^*)_i-\sum_{j=1}^kA_{i,j}\alpha(x_{i,j})w_x(x_{i,j})v(x_{i,j}).
\end{equation}
 In light of \eqref{SV bilinear form on element} and \eqref{fluxproperty}, we have that
\[
    a_{h,i}(u-I_hu,v_h^*)=b_i(u-I_hu,v_h^*), \ \ A_{h,i}(u-u_I,v_h^*)=\tilde{\alpha}_i^2b_i(u-I_hu,v_h^*),
\]
  and thus,
\begin{equation}\label{AForm}
  A_h(u-I_hu, v_h^*)=\sum_{{\bf V}_i\in\Omega_1}\tilde{\alpha}_i^2 b_{i}(u-I_h^-u,v_h^*)+\sum_{{\bf V}_i\in\Omega_2}\tilde{\alpha}_i^2b_{i}(u-I_h^+u,v_h^*)+\sum_{{\bf V}_i\in\Omega_3}\tilde{\alpha}_i^2b_{i}(u-I_h^{\pm}u,v_h^*).
\end{equation}
Noticing that for any $\mathbf{V}_i\in\Omega_1$, $(u-I_h^-u)(x_{i,j})=0(j\in \mathbb{Z}_k)$ and for $\mathbf{V}_i\in\Omega_2$, $(u-I_h^+u)(x_{i,j})=0(j\in \mathbb{Z}_k)$, we derive that
\begin{equation}\label{AForm1}
  A_h(u-I_hu,v_h^*)=\sum_{{\bf V}_i\in\Omega_1}\tilde{\alpha}_i^2((u-I_h^-u)_t,v_h^*)+\sum_{{\bf V}_i\in\Omega_2}\tilde{\alpha}_i^2((u-I_h^+u)_t,v_h^*)+\sum_{{\bf V}_i\in\Omega_3}\tilde{\alpha}_i^2b_{i}(u-I_h^{\pm}u,v_h^*).
\end{equation}

Following \cite{caozhangzou2014}, we begin our superconvergence analysis by constructing a local correction function for $((u-I_h^-u)_t,v_h)$ on $\Omega_1$ and for $((u-I_h^+u)_t,v_h)$ on $\Omega_2$, respectively. Let
\begin{equation}\label{Aadefination}
  w_0=u-I_h^-u,\ \  \bar w_0=u-I_h^+u,\ \ \overline{\alpha}_i=\max_{x\in {\bf V}_i}|\alpha(x)|,\ \ \mathcal{P}_{-}({\bf V}_i):=\mathcal{P}_k({\bf V}_i)\setminus \mathcal{P}_0({\bf V}_i).
\end{equation}
We define the correction function $w_1\in V_h$ for $((u-I_h^-u)_t,v^*_h)$  as follows:
\begin{equation}\label{local correction w1}
 w_1(x_{i+\frac{1}{2}}^{-})=0,~(\overline{\alpha}_i w_1,v_x)_i=(w_0,v^*),~\forall v\in \mathcal{P}_{-}({\bf V}_i).
\end{equation}
  We have the following properties for $w_1$.
\begin{lmm}\label{lemma:1}
Let $\mathbf{V}_i\in\Omega_1$, $w_1\in V_h$ be defined by \eqref{local correction w1}  with $\overline{\alpha}_i$ given by \eqref{Aadefination}.   Then
 for both RSV and LSV  methods,
\begin{equation}\label{Correctionproperty}
  b_{i}(w_0+w_1,v^*)=(\partial_tw_1,v^*)_i-\sum_{j=1}^kA_{i,j}(\alpha(x_{i,j})-\overline{\alpha}_i)v_x(x_{i,j})w_1(x_{i,j}), \forall v\in \mathcal{P}_{-}({\bf V}_i).
\end{equation}
Furthermore, if $u\in W^{k+3,\infty}$, then
\begin{equation}\label{correctionestimatew1}
  \|w_1\|_{0,{\bf V}_i}+\|\partial_tw_1\|_{0,{\bf V}_i}\lesssim \frac{h_i^{k+\frac{5}{2}}}{\|\alpha\|_{0,\infty,{\bf V}_i}} \| u\|_{k+3,\infty,{\bf V}_i}.
\end{equation}
Consequently,
\begin{equation}\label{superconvergencecorrection1}
  \tilde{\alpha}_i^2b_{i}(w_0+w_1,v^*)\lesssim h_i^{k+\frac{5}{2}} \| u\|_{k+3,\infty,{\bf V}_i} \|v\|_{\alpha,\mathbf{V}_i},\forall v\in \mathcal{P}_{-}({\bf V}_i).
\end{equation}
\end{lmm}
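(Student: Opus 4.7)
The plan is to prove the three claims in sequence: first the structural identity \eqref{Correctionproperty}, next the norm bounds in \eqref{correctionestimatew1}, and finally the superconvergence estimate \eqref{superconvergencecorrection1}, which follows by combining the identity with the norm bounds. Throughout I will exploit the fact that on $\mathbf{V}_i\in\Omega_1$ one has $I_h=I_h^-$, so $w_0(x_{i,j})=0$ for $j\in\mathbb{Z}_k$, and that $\alpha$ is smooth and does not change sign on $\Omega_1$, so $\tilde\alpha_i$ and $\overline\alpha_i=\|\alpha\|_{0,\infty,\mathbf{V}_i}$ are comparable.

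For the identity, I expand $b_i(w_0+w_1,v^*)$ directly using \eqref{BForm}. Vanishing of $w_0$ at the interior points collapses the discrete sum to $w_1$ alone, and splitting $\alpha(x_{i,j})=\overline\alpha_i+(\alpha(x_{i,j})-\overline\alpha_i)$ peels off a main term. The critical algebraic step is to convert $\sum_{j=1}^k A_{i,j} v_x(x_{i,j}) w_1(x_{i,j})$ into the integral $(w_1, v_x)_i$: since $w_1 v_x\in \mathcal{P}_{2k-1}$ it is exactly integrated by both the $k$-point Gauss and the $(k+1)$-point Radau rules on $\mathbf{V}_i$, and the omitted endpoint contributions vanish because $A_{i,0}=A_{i,k+1}=0$ for LSV and because $A_{i,0}=0$ together with $w_1(x_{i+1/2}^-)=0$ (imposed in \eqref{local correction w1}) for RSV. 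The defining relation of $w_1$ — engineered precisely so that $(\overline\alpha_i w_1, v_x)_i$ cancels the $((w_0)_t, v^*)_i$ piece coming from the expansion of $((w_0+w_1)_t, v^*)_i$ — then yields \eqref{Correctionproperty}.

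For the norm estimate \eqref{correctionestimatew1}, I choose a test function $v\in\mathcal{P}_-(\mathbf{V}_i)$ with $v_x=w_1$ (this determines $v$ uniquely once one fixes the normalization implicit in $\mathcal{P}_-$). A Poincar\'e-type inequality gives $\|v\|_{0,\mathbf{V}_i}\lesssim h_i\|w_1\|_{0,\mathbf{V}_i}$, and the boundedness of $T$ from \eqref{Tboundedproperty} gives $\|v^*\|_{0,\mathbf{V}_i}\lesssim\|v\|_{0,\mathbf{V}_i}$. Substituting into the defining relation and applying Cauchy--Schwarz produces $\overline\alpha_i\|w_1\|_{0,\mathbf{V}_i}^2\lesssim h_i\|w_0\|_{0,\mathbf{V}_i}\|w_1\|_{0,\mathbf{V}_i}$, and the standard $L^\infty\!\to\!L^2$ interpolation bound $\|w_0\|_{0,\mathbf{V}_i}\lesssim h_i^{k+3/2}|u|_{k+1,\infty,\mathbf{V}_i}$ then produces the claimed bound on $\|w_1\|$. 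The bound on $\|\partial_t w_1\|$ follows by first time-differentiating the defining relation and then repeating the argument with $w_0$ replaced by $\partial_t w_0$; the PDE $u_t=-(\alpha u)_x+g$ is used iteratively to convert time derivatives of $u$ into spatial derivatives, which is why the norm $\|u\|_{k+3,\infty}$ is sufficient.

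The superconvergence estimate \eqref{superconvergencecorrection1} is then assembled by substituting \eqref{Correctionproperty} into $\tilde\alpha_i^2 b_i(w_0+w_1,v^*)$. The first piece $\tilde\alpha_i^2(\partial_t w_1,v^*)_i$ is controlled via the $\|\partial_t w_1\|$ bound together with $\|v^*\|\lesssim\|v\|$: one factor of $\tilde\alpha_i$ absorbs the $\overline\alpha_i$ in the denominator of \eqref{correctionestimatew1}, and the other combines with $\|v\|_{0,\mathbf{V}_i}$ to produce $\|v\|_{\alpha,\mathbf{V}_i}$. For the discrete quadrature piece, smoothness of $\alpha$ gives $|\alpha(x_{i,j})-\overline\alpha_i|\lesssim h_i$, and a discrete Cauchy--Schwarz over the positive weights followed by the inverse inequality $\|v_x\|_{0,\mathbf{V}_i}\lesssim h_i^{-1}\|v\|_{0,\mathbf{V}_i}$ produces the same $h_i^{k+5/2}\|u\|_{k+3,\infty}\|v\|_{\alpha,\mathbf{V}_i}$ order. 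The step I expect to require the most care is the quadrature-to-integral conversion inside the identity: without the boundary condition $w_1(x_{i+1/2}^-)=0$ being precisely matched to the nonzero Radau endpoint weight in the RSV case, the truncated sum $\sum_{j=1}^k$ would fail to reproduce $(w_1,v_x)_i$, and the whole cancellation scheme that makes \eqref{Correctionproperty} possible would break.
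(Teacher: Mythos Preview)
Your argument for the identity \eqref{Correctionproperty} is correct and matches the paper's: both rely on the observation that $\sum_{j=1}^k A_{i,j}\,v_x(x_{i,j})\,w_1(x_{i,j})=(v_x,w_1)_i$ exactly, because either $A_{i,0}=A_{i,k+1}=0$ (LSV) or $A_{i,0}=0$ together with $w_1(x_{i+\frac12}^-)=0$ kills the missing $j=k+1$ term (RSV on $\Omega_1$). The final estimate \eqref{superconvergencecorrection1} is also assembled in the same way as in the paper.

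There is, however, a genuine gap in your argument for \eqref{correctionestimatew1}. You propose to pick $v\in\mathcal{P}_-(\mathbf{V}_i)$ with $v_x=w_1$. But $\mathcal{P}_-(\mathbf{V}_i)=\mathcal{P}_k(\mathbf{V}_i)\setminus\mathcal{P}_0(\mathbf{V}_i)$, so any admissible $v$ has $v_x\in\mathcal{P}_{k-1}(\mathbf{V}_i)$, whereas $w_1\in\mathcal{P}_k(\mathbf{V}_i)$. No test function in the defining relation \eqref{local correction w1} can satisfy $v_x=w_1$; that relation simply does not see the degree-$k$ component of $w_1$. This is exactly why the endpoint constraint $w_1(x_{i+\frac12}^-)=0$ is part of the definition: it supplies the missing equation for the top coefficient, and your Poincar\'e argument never invokes it.

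The paper handles this by expanding $w_1|_{\mathbf{V}_i}=\sum_{j=0}^k c_{i,j}(t)\,L_{i,j}(x)$ in Legendre polynomials. Testing \eqref{local correction w1} with $v=\phi_{i,m+1}:=\int_{x_{i-1/2}}^x L_{i,m}$ for $m=0,\ldots,k-1$ isolates each lower coefficient and gives $|c_{i,m}|\lesssim (h_i/\overline\alpha_i)\|\partial_t w_0\|_{0,\infty,\mathbf{V}_i}$. The top coefficient $c_{i,k}$ is then controlled through the endpoint condition, since $w_1(x_{i+\frac12}^-)=\sum_{j=0}^k c_{i,j}=0$ yields $|c_{i,k}|\le\sum_{j<k}|c_{i,j}|$. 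Summing via $\|w_1\|_{0,\mathbf{V}_i}^2\lesssim h_i\sum_j c_{i,j}^2$ and using $\|\partial_t w_0\|_{0,\infty,\mathbf{V}_i}\lesssim h_i^{k+1}\|u\|_{k+3,\infty,\mathbf{V}_i}$ produces \eqref{correctionestimatew1}. Your Poincar\'e idea can be salvaged if you take $v_x$ to be the $L^2$ projection of $w_1$ onto $\mathcal{P}_{k-1}$ and then bound the Legendre-$k$ mode separately through the boundary condition, but as written the step does not go through.
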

\begin{proof}  First,  for any
$\mathbf{V}_i\in\Omega_1$, the partial points $x_{i,j}$ are chosen as Gauss or right Radau points.  By the property of Gauss or right Radau numerical quadrature, we have
\[
   \sum_{j=1}^kA_{i,j}v_x(x_{i,j})w_1(x_{i,j})=\sum_{j=1}^{k+1}A_{i,j}v_x(x_{i,j})w_1(x_{i,j})=(v_x,w_1)_i,\ \ v\in V_h.
\]
Then a direct calculation
from \eqref{BForm} and \eqref{local correction w1}  yields that for any $v\in \mathcal{P}_{-}({\bf V}_i)$,
\begin{eqnarray}\nonumber
   b_{i}(w_0+w_1,v^*)
   &=& (\partial_t w_0,v^*)_i+(\partial_tw_1,v^*)_i-\sum_{j=1}^kA_{i,j}\alpha(x_{i,j})v_x(x_{i,j})w_1(x_{i,j}) \\ \nonumber
   &=& (\partial_t w_0,v^*)_i+(\partial_tw_1,v^*)_i-\sum_{j=1}^kA_{i,j}(\alpha(x_{i,j})-\overline{\alpha}_i)v_x(x_{i,j})w_1(x_{i,j})-(\overline{\alpha}_iw_1,v_x) \\ \nonumber
   &=& (\partial_tw_1,v^*)_i-\sum_{j=1}^kA_{i,j}(\alpha(x_{i,j})-\overline{\alpha}_i)v_x(x_{i,j})w_1(x_{i,j}).
\end{eqnarray}
 This finishes the proof of \eqref{Correctionproperty}.

To estimate $w_1$, we suppose $w_1$ has the  following Legendre expansion in each element $\mathbf{V}_i$
\begin{equation*}
  w_1|_{\mathbf{V}_i}=\sum_{j=0}^kc_{i,j}(t)L_{i,j}(x).
\end{equation*}
 Here   $L_{i,j}$ denotes the Legendre polynomial of degree $j$ in ${\bf V}_i$.
 Denoting  $\phi_{i,j+1}=\int_{x_{i-\frac{1}{2}}}^{x}L_{i,j}(s)\text{d}s$ and  choosing $v=\phi_{i,m+1}(m\in \mathbb{Z}_{k-1}^0)$ in \eqref{local correction w1} leads to
\begin{equation*}
  |c_{i,m}|=\frac{(2m+1)}{\overline{\alpha}_ih_i}|(\partial_tw_0,\phi^*_{i,m+1})_i|\lesssim \frac{1}{\overline{\alpha}_i} \|\partial_tw_0\|_{0,\mathbf{V}_i}\|\phi_{i,m+1}\|_{0,\mathbf{V}_i}\lesssim \frac{h_i}{\overline{\alpha}_i}\|\partial_tw_0\|_{0,\infty,\mathbf{V}_i}.
\end{equation*}
 Using $w_1(x_{i+\frac{1}{2}}^{-})=0$, we obtain that
\begin{equation}\label{abslutewproperty}
  |c_{i,k}|=|\sum_{j=0}^{k-1}c_{i,j}|\lesssim\frac{h_i}{\overline{\alpha}_i}\|\partial_tw_0\|_{0,\infty,\mathbf{V}_i}.
\end{equation}
Similarly,  there holds for all $m\in \mathbb{Z}_{k}^0$,
\begin{equation*}
  |\partial_t c_{i,m}| \lesssim \frac{h_i}{\overline{\alpha}_i}\|\partial_{tt}w_0\|_{0,\infty,\mathbf{V}_i}.
\end{equation*}
Consequently,
\begin{equation*}
  \|w_1\|_{0,\mathbf{V}_i}^2\lesssim h_i\sum_{j=0}^kc_{i,j}^2\lesssim \frac{h_i^{3}}{\overline{\alpha}_i^2}\|\partial_tw_0\|_{0,\infty,\mathbf{V}_i}^2,\ \
  \|\partial_tw_1\|_{0,\mathbf{V}_i}^2\lesssim h_i\sum_{j=0}^k(\partial_tc_{i,j})^2\lesssim
  \frac{h_i^{3}}{\overline{\alpha}_i^2}\|\partial_{tt}w_0\|_{0,\infty,\mathbf{V}_i}^2.
\end{equation*}
 Then    \eqref{correctionestimatew1} follows from the approximation property of the interpolation function.

As a direct consequence of Cauchy-Schwarz inequality and  \eqref{correctionestimatew1},
\begin{equation}\label{estimaiteformbi1}
  \tilde{\alpha}_i^2(\partial_tw_1,v^*)_i\lesssim \|\partial_tw_1\|_{\alpha}\|v\|_{\alpha}\lesssim h_i^{k+\frac{5}{2}}\| u\|_{k+3,\infty,{\bf V}_i}\|v\|_{\alpha,\mathbf{V}_i}.
\end{equation}
 Similarly, we use the inverse inequality to derive
\begin{eqnarray}\nonumber
  \tilde{\alpha}_i^2|A_{i,j}(\alpha(x_{i,j})-\overline{\alpha}_i)v_x(x_{i,j})w_1(x_{i,j})| &\lesssim& \tilde{\alpha}_i^2 h_i^2\|v_x\|_{0,\infty,\mathbf{V}_i}\|w_1\|_{0,\infty,\mathbf{V}_i}\lesssim \tilde{\alpha}_i^2 \|v\|_{0,\mathbf{V}_i}\|w_1\|_{0,\mathbf{V}_i} \\\label{estimaiteformbi2}
   &\lesssim& h_i^{k+\frac{5}{2}} \| u\|_{k+2,\infty,{\bf V}_i}\|v\|_{\alpha,\mathbf{V}_i}.
\end{eqnarray}
Combining  \eqref{Correctionproperty}, \eqref{estimaiteformbi1} and \eqref{estimaiteformbi2} together, we get \eqref{superconvergencecorrection1} immediately.
\end{proof}

Next, we construct the correction function for the term $((u-I_h^+u)_t,v_h^*)$. In each element $\mathbf{V}_i\in\Omega_2$,  let $w_2\in V_h$ satisfy
\begin{equation}\label{local correction w2}
 w_2(x_{i-\frac{1}{2}}^{+})=0,~(\overline{\alpha}_i w_2,v_x)_i=(\overline{w}_0,v^*),~\forall v\in \mathcal{P}_{-}({\bf V}_i).
\end{equation}
Here $\alpha_i$ and $\mathcal{P}_{-}({\bf V}_i)$ are the same as in \eqref{Aadefination}.  Following  the same  argument as that in
 Lemma \ref{lemma:1}, we  have the following results for $w_2$.

\begin{lmm}Let $\mathbf{V}_i\in\Omega_2$, $w_2\in V_h$ be defined by \eqref{local correction w2} with $\overline{\alpha}_i$ given in \eqref{Aadefination}.   Then
for both   RSV and LSV  methods,
\begin{equation}\label{Correctionpropertyw21}
  b_{i}(\overline{w}_0+w_2,v^*)=(\partial_tw_2,v^*)_i-\sum_{j=1}^kA_{i,j}(\alpha(x_{i,j})-\overline{\alpha}_i)v_x(x_{i,j})w_2(x_{i,j}), \forall v\in \mathcal{P}_{-}({\bf V}_i).
\end{equation}
Furthermore, if $u\in W^{k+3,\infty}$, then
\begin{equation}\label{correctionestimatew22}
  \|w_2\|_{0,{\bf V}_i}+\|\partial_tw_2\|_{0,{\bf V}_i}\lesssim \frac{h_i^{k+\frac{5}{2}}}{\|\alpha\|_{0,\infty,{\bf V}_i}}\| u\|_{k+3,\infty,{\bf V}_i}.
\end{equation}
Consequently,
\begin{equation}\label{superconvergencecorrectionw23}
  \tilde{\alpha}_i^2b_{i}(\bar w_0+w_2,v^*)\lesssim h_i^{k+\frac{5}{2}} \| u\|_{k+3,\infty,{\bf V}_i}\|v\|_{{\alpha}_i^2,\mathbf{V}_i},\forall v\in \mathcal{P}_{-}({\bf V}_i).
\end{equation}
\end{lmm}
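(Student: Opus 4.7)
The claim is the direct $\Omega_2$-analog of Lemma~\ref{lemma:1}, and I would prove it by mirroring that argument with the roles of the right endpoint $x_{i+\frac12}^-$, right-Radau points, and $I_h^-$ exchanged for the left endpoint $x_{i-\frac12}^+$, left-Radau points, and $I_h^+$. Addressing the three conclusions in order, I would (i) verify a discrete-to-continuous quadrature identity for the summation appearing in $b_i$ and deduce (\ref{Correctionpropertyw21}); (ii) bound the Legendre coefficients of $w_2$ and $\partial_t w_2$ to obtain (\ref{correctionestimatew22}); and (iii) combine the two to get (\ref{superconvergencecorrectionw23}) by Cauchy--Schwarz and the inverse inequality.

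For (i), the key identity is $\sum_{j=1}^k A_{i,j} v_x(x_{i,j}) w_2(x_{i,j}) = (v_x, w_2)_i$ for every $v \in \mathcal{P}_{-}(\mathbf{V}_i)$ and every $w_2 \in V_h$ with $w_2(x_{i-\frac12}^+) = 0$. For LSV the partition uses Gauss points, so $A_0 = A_{k+1} = 0$ and Gauss exactness on $\mathcal{P}_{2k-1}$ closes the identity; for RSV the partition consists of left-Radau points with $A_{k+1} = 0$ but $A_0 \neq 0$, and the missing $j=0$ term is killed by $w_2(x_{i,0}) = 0$, after which left-Radau exactness on $\mathcal{P}_{2k}$ finishes the job. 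With this identity available, (\ref{Correctionpropertyw21}) follows by the same rearrangement as in Lemma~\ref{lemma:1}: since $\overline{w}_0 = u - I_h^+ u$ vanishes at $x_{i,j}$ for all $j \in \mathbb{Z}_k^0$, we have $b_i(\overline{w}_0, v^*) = (\partial_t \overline{w}_0, v^*)_i$; splitting $\alpha(x_{i,j}) = (\alpha(x_{i,j}) - \overline{\alpha}_i) + \overline{\alpha}_i$ inside $b_i(w_2, v^*)$ and using the quadrature identity together with the defining relation $(\overline{\alpha}_i w_2, v_x)_i = (\overline{w}_0, v^*)$ cancels the $(\partial_t \overline{w}_0, v^*)_i$ contribution.

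For (ii), I would expand $w_2|_{\mathbf{V}_i} = \sum_{m=0}^k c_{i,m}(t) L_{i,m}$ and test (\ref{local correction w2}) against $v = \phi_{i,m+1} := \int_{x_{i-\frac12}}^x L_{i,m}\,\mathrm{d}s$ for $m = 0, \ldots, k-1$; Legendre orthogonality isolates $c_{i,m}$ and produces $|c_{i,m}(t)| \lesssim (h_i / \overline{\alpha}_i)\|\partial_t \overline{w}_0\|_{0,\infty,\mathbf{V}_i}$. The boundary condition $w_2(x_{i-\frac12}^+) = 0$ reads $\sum_{m=0}^k (-1)^m c_{i,m}(t) = 0$, which propagates the same bound to $c_{i,k}(t)$; differentiating (\ref{local correction w2}) once in $t$ repeats the argument for $\partial_t w_2$ with $\|\partial_{tt} \overline{w}_0\|_{0,\infty,\mathbf{V}_i}$ in place of $\|\partial_t \overline{w}_0\|_{0,\infty,\mathbf{V}_i}$. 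Standard Lagrange interpolation error bounds, together with the PDE $u_t = -(\alpha u)_x + g$ invoked twice to trade temporal for spatial regularity, yield $\|\partial_{tt} \overline{w}_0\|_{0,\infty,\mathbf{V}_i} \lesssim h_i^{k+1}\|u\|_{k+3,\infty,\mathbf{V}_i}$, giving (\ref{correctionestimatew22}). Estimate (\ref{superconvergencecorrectionw23}) then follows from (\ref{Correctionpropertyw21}) by Cauchy--Schwarz on $\tilde{\alpha}_i^2(\partial_t w_2, v^*)_i$ and the inverse inequality $\|v_x\|_{0,\infty,\mathbf{V}_i} \lesssim h_i^{-1}\|v\|_{0,\mathbf{V}_i}$ on the discrete sum. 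The only subtle point, and the place where careless mirroring would fail, is in (i): the RSV prescription of left-Radau points on $\Omega_2$ aligns the unique nonzero endpoint weight $A_0$ with the vanishing value $w_2(x_{i,0})$; had right-Radau points been used instead, a surviving boundary term would collapse the entire correction identity.
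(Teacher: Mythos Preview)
Your proposal is correct and is exactly what the paper intends: the paper gives no separate proof for this lemma, stating only that it ``follows the same argument as that in Lemma~\ref{lemma:1},'' and your step-by-step mirroring---swapping $w_1(x_{i+\frac12}^-)=0$ for $w_2(x_{i-\frac12}^+)=0$, right-Radau for left-Radau, and $I_h^-$ for $I_h^+$---is precisely that argument, including the correct observation that on $\Omega_2$ the left-Radau endpoint weight $A_0\neq 0$ is neutralized by $w_2(x_{i,0})=0$. Your closing remark about why the RSV assignment of left-Radau points to $\Omega_2$ is essential to the quadrature identity is an accurate diagnosis of the one place the symmetry could have broken.
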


\subsection{Superconvergence for the SV flux function approximation}

We   construct a global correction function $w\in V_h$ over the whole domain as follows.
\begin{eqnarray}\label{globalcorrectionfunctionw}
w=\left \{
\begin{array}{ll}
w_1,   &if~\alpha|_{i-\frac{1}{2}}\geq 0,\alpha|_{i+\frac{1}{2}}> 0,\\
w_2,   &if~\alpha|_{i-\frac{1}{2}}\leq 0,\alpha|_{i+\frac{1}{2}}< 0, \\
~0,     &otherwise.\\
\end{array}
\right.
\end{eqnarray}
  Here $w_1$ and $w_2$ are  defined  by \eqref{local correction w1} and \eqref{local correction w2}, respectively.

With this definition, we obtain the following superconvergence properties of the correction function $w$.
\begin{thm}\label{PropertyForCorrectionFw}
Let $u(\cdot,t)\in W^{k+3,\infty}$ be the solution of \eqref{eqn:para}, and $w$ be the correction function defined by \eqref{globalcorrectionfunctionw}.  Then
\begin{equation}\label{PropertyForCorrectionFw1}
\widehat{w}|_{i+\frac{1}{2}}=0,~i\in \mathbb{Z}_{N},\ \  \|w\|_{\alpha}+\|\partial_tw\|_{\alpha} \lesssim h^{k+2} \|u\|_{k+3,\infty},
\end{equation}
\begin{equation}\label{PropertyForCorrectionFw3}
  A_h(u-I_hu+w,v_h^*) \lesssim h^{k+2} \|u\|_{k+3,\infty}\|v_h\|_{\alpha},~\forall v_h\in V_h.
\end{equation}
\end{thm}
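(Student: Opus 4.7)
The plan is to prove the three claims in sequence, with the flux identity and the weighted-norm bound obtained as direct consequences of the construction and Lemmas 5.1--5.2, and the bulk of the work reserved for the estimate on $A_h$.

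For the flux identity I would do a case analysis on $\mathrm{sgn}\,\alpha(x_{i+1/2})$ together with the construction \eqref{globalcorrectionfunctionw}. When $\alpha(x_{i+1/2})>0$ the upwind value is $w^-|_{i+1/2}$, which vanishes because either $\mathbf V_i\in\Omega_1$ (so $w=w_1$ and $w_1(x_{i+1/2}^-)=0$ by \eqref{local correction w1}) or $\mathbf V_i\in\Omega_3$ (so $w\equiv 0$); the case $\alpha(x_{i+1/2})<0$ is symmetric via $w_2(x_{(i+1)-1/2}^+)=0$ from \eqref{local correction w2}. For the norm bound, on $\Omega_1\cup\Omega_2$ the two endpoint values of $\alpha$ share a sign, so $\tilde\alpha_i\lesssim\|\alpha\|_{0,\infty,\mathbf V_i}$; combining this with \eqref{correctionestimatew1} and \eqref{correctionestimatew22} gives $\tilde\alpha_i^2(\|w\|_{0,\mathbf V_i}^2+\|\partial_tw\|_{0,\mathbf V_i}^2)\lesssim h_i^{2k+5}\|u\|_{k+3,\infty}^2$. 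Summing over $N\sim h^{-1}$ elements and taking a square root produces the advertised $h^{k+2}$ bound for both $\|w\|_\alpha$ and $\|\partial_t w\|_\alpha$.

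For the bilinear form estimate I would first establish the algebraic identity
\begin{equation*}
A_h(u-I_hu+w,v_h^*)=\sum_{\mathbf V_i\in\Omega_1}\tilde\alpha_i^2 b_i(w_0+w_1,v_h^*)+\sum_{\mathbf V_i\in\Omega_2}\tilde\alpha_i^2 b_i(\bar w_0+w_2,v_h^*)+\sum_{\mathbf V_i\in\Omega_3}\tilde\alpha_i^2 b_i(u-I_h^{\pm}u,v_h^*),
\end{equation*}
by combining \eqref{AForm1} with the fact that $\widehat w\equiv 0$ reduces $a_{h,i}(w,v_h^*)$ to $b_i(w,v_h^*)$; this last reduction is a direct expansion of \eqref{SV bilinear form on element} using the recursion \eqref{wij} in which every interface flux contribution vanishes. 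On $\Omega_3$ the element contains a turning point of $\alpha$, so $\|\alpha\|_{0,\infty,\mathbf V_i}\lesssim h_i$ and $\tilde\alpha_i\lesssim h_i$; since the number of elements in $\Omega_3$ is bounded by the number of turning points, a direct estimate on $b_i(u-I_h^\pm u,v_h^*)$ using the $O(h_i^{k+1})$ approximation of $I_h^\pm u$ and the inverse inequality produces a contribution far better than $h^{k+2}$. On $\Omega_1$ and $\Omega_2$ I would decompose $v_h|_{\mathbf V_i}=\bar v_{h,i}+\tilde v_{h,i}$ into its cell average and its $\mathcal P_-(\mathbf V_i)$ part; the latter is handled immediately by \eqref{superconvergencecorrection1} and \eqref{superconvergencecorrectionw23}, which sum via Cauchy--Schwarz to $h^{k+2}\|\tilde v_h\|_\alpha$.

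The main obstacle is the cell-average piece on $\Omega_1$ and $\Omega_2$, since the defining relation for $w_1$ only holds against $\mathcal P_-(\mathbf V_i)$. Here $b_i(w_0+w_1,\bar v_{h,i}^*)$ collapses to $\bar v_{h,i}\,\tfrac{d}{dt}\!\int_{\mathbf V_i}(w_0+w_1)\,dx$ and must be controlled by hand. The plan is to exploit the superconvergence of $\int_{\mathbf V_i}(u-I_h^-u)\,dx$: because $I_h^-u$ matches $u$ at the interior Gauss/Radau nodes and the relevant nodal polynomial has vanishing mean ($\int_{-1}^1 L_k(s)(s-1)\,ds=0$ for $k\ge 2$ on $\Omega_1$ with LSV, and $\int_{-1}^1(L_{k+1}(s)-L_k(s))\,ds=0$ for RSV), the integral is a quadrature remainder of order $O(h_i^{k+3})\|u\|_{k+2,\infty}$. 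Together with $|\int_{\mathbf V_i}w_1\,dx|\le h_i^{1/2}\|w_1\|_{0,\mathbf V_i}\lesssim h_i^{k+3}/\|\alpha\|_{0,\infty,\mathbf V_i}$ and the bound $\tilde\alpha_i\lesssim\|\alpha\|_{0,\infty,\mathbf V_i}$, the per-element cell-average contribution is $\lesssim h_i^{k+5/2}\|\bar v_{h,i}\|_{\alpha,\mathbf V_i}\|u\|_{k+3,\infty}$, and a final Cauchy--Schwarz summation yields $h^{k+2}\|u\|_{k+3,\infty}\|v_h\|_\alpha$, closing the estimate.
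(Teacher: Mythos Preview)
Your proposal is correct and follows essentially the same route as the paper: the flux identity and the $\|\cdot\|_\alpha$ bound are read off from the construction and Lemmas~5.1--5.2 (using exactly your observation $\tilde\alpha_i\lesssim\|\alpha\|_{0,\infty,\mathbf V_i}$ on $\Omega_1\cup\Omega_2$), and the bound on $A_h$ is obtained by splitting $v_h$ into its $\mathcal P_0$ and $\mathcal P_-$ components, invoking \eqref{superconvergencecorrection1}--\eqref{superconvergencecorrectionw23} for the latter and the vanishing mean of the nodal polynomial for the former, with $\Omega_3$ handled via $\tilde\alpha_i\lesssim h$. The only cosmetic differences are that the paper treats $\Omega_3$ by Cauchy--Schwarz rather than by counting elements, and your remark that $\int_{-1}^1 L_k(s)(s-1)\,ds=0$ requires $k\ge 2$ for LSV is in fact sharper than the paper's phrasing (and is consistent with the numerical tables, which show no flux superconvergence for LSV at $k=1$).
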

\begin{proof}
First,
 \eqref{PropertyForCorrectionFw1}  follows directly from  \eqref{local correction w1}, \eqref{local correction w2},
 \eqref{correctionestimatew1} and \eqref{correctionestimatew22}.
  To  show \eqref{PropertyForCorrectionFw3},    it is sufficient to prove that \eqref{PropertyForCorrectionFw3}
 holds true for all $ v_h\in \mathcal{P}_{-}(\mathbf{V}_i)$ and for all  $ v_h\in \mathcal{P}_{0}(\mathbf{V}_i)$ separately.

Recalling the definition of $A_h(\cdot,\cdot)$ and using  \eqref{AForm}, we have
\begin{equation}\label{SuperPropertyForCorrectionAw}
  A_h(u-I_hu+w,v_h^*)=\sum_{\mathbf{V}_i\in\Omega_1\cup\Omega_2}\tilde{\alpha}_i^2b_i(u-I_hu+w,v^*)+\sum_{\mathbf{V}_i\in\Omega_3}\tilde{\alpha}_i^2b_{i}(u-I_hu,v^*),\forall v_h\in \mathcal{P}_{-}(\Omega),
\end{equation}
For the first term on the right side of \eqref{SuperPropertyForCorrectionAw}, we use  \eqref{superconvergencecorrection1} and \eqref{superconvergencecorrectionw23} and then get
 \begin{equation}\label{firsttermSuperproperty1}
   \sum_{\mathbf{V}_i\in\Omega_1\bigcup\Omega_2}\tilde{\alpha}_i^2b_i(u-I_hu+w,v^*)\lesssim h^{k+2} \|u\|_{k+3,\infty}\|v\|_{\alpha}.
 \end{equation}
For any $\mathbf{V}_i\in \Omega_3$, since $\alpha(x_{i-\frac{1}{2}})\alpha(x_{i-\frac{1}{2}})\leq 0$, then there exists at least one point $\theta_i\in \mathbf{V}_i$ such that $\alpha(\theta_i)=0$,  and thus,
\begin{equation}\label{xishuproperty1}
  \|\alpha\|_{0,\infty,\Omega_3}=\|\alpha'(\xi_i)(x-\theta_i)\|_{0,\infty,\Omega_3}\leq h\|\alpha\|_{1,\infty,\Omega_3}\lesssim h.
\end{equation}
Then the combination of \eqref{124}, \eqref{xishuproperty1} and Cauchy-Schwartz inequality leads to
\begin{eqnarray}\nonumber
                \Big|\sum_{\mathbf{V}_i\in\Omega_3}\tilde{\alpha}_i^2a_{h,i}(u-I_hu,v^*) \Big| &\lesssim& h^{k+1}\sum_{\mathbf{V}_i\in\Omega_3}\tilde{\alpha}_i^2(\|u\|_{k+2,\mathbf{V}_i}+h_i^{\frac{1}{2}}\|u\|_{k+1,\infty})\|v\|_{0,\mathbf{V}_i} \\\label{secondtermSuperproperty1}
                &\lesssim & h^{k+2}(\|u\|_{k+2}+ \|u\|_{k+1,\infty})\|v\|_\alpha,~\forall v\in V_h.
\end{eqnarray}
Then \eqref{PropertyForCorrectionFw3} follows for any $ v_h\in \mathcal{P}_{-}(\mathbf{V}_i)$ from \eqref{SuperPropertyForCorrectionAw}, \eqref{firsttermSuperproperty1} and \eqref{secondtermSuperproperty1}.

For  all $ v\in \mathcal{P}_{0}(\mathbf{V}_i)$, a direct calculation from \eqref{BForm}-\eqref{AForm} and \eqref{vv} yields that
\begin{equation}\label{SuperPropertyForP0}
  A_h(u-I_hu+w,v^*)=\sum_{\mathbf{V}_i\in\Omega_1\cup\Omega_2}\tilde{\alpha}_i^2\Big[( u_t-I_hu_t,v)_i+(\partial_tw,v)_i\Big]+\sum_{\mathbf{V}_i\in\Omega_3}\tilde{\alpha}_i^2(u_t-I_hu_t,v)_i
\end{equation}
For any $\mathbf{V}_i\in\Omega_1\cup\Omega_2$,  the interpolation points are either Gauss points or right/left Radau points.
 By utilizing the Newton-interpolating-remainder representation, we  get
\begin{equation*}
  u-I_hu=u[x_{i,1},\ldots,x_{i,k+1},x]\prod_{j=1}^{k+1}(x-x_{i,j}):=\widetilde{u}(x)\widetilde{L}_{i,k+1}(x),
\end{equation*}
where $\widetilde{u}(x)=u[x_{i,1},\ldots,x_{i,k+1},x]$ and $\widetilde{L}_{i,k+1}(x):=\prod_{j=1}^{k+1}(x-x_{i,j})$.
Using the properties of Legendre polynomials or  Radau polynomials, we have
\[
   (\widetilde{L}_{i,k+1},1)_i=0,
\]
  and thus
\begin{equation}\label{SuperPropertyForP0V1}
  \Big|(u_t-I_hu_t,v)_i\Big|=\Big|\int_{x_{i-\frac{1}{2}}}^{x_{i+\frac{1}{2}}}(\widetilde{u}-\widetilde{u}_0) \widetilde{L}_{i,k+1}(x)v\text{d}x\Big|\lesssim h_i^{k+2}\|u\|_{k+2,\mathbf{V}_i}\|v\|_{0,\mathbf{V}_i}.
\end{equation}
  Here $\widetilde{u}_0$ denotes the cell average of $\widetilde{u}$ in $\mathbf{V}_i$.   Combing \eqref{SuperPropertyForP0}, \eqref{SuperPropertyForP0V1} and \eqref{secondtermSuperproperty1}, we conclude that
  the \eqref{PropertyForCorrectionFw3}  is also valid for all $ v_h\in \mathcal{P}_{0}(\mathbf{V}_i)$.
  The proof is completed.
\end{proof}

\begin{coro}\label{coro:1}
Let $u(\cdot,t)\in W^{k+3,\infty}$ be the solution of \eqref{eqn:para},
and $u_h$ be the SV  solution  of \eqref{SVBFscheme}
with initial solution $u_h(\cdot,0)=I_hu_0$ with $I_hu$  the interpolation function of $u$ defined in \eqref{Newinterpolation}. Then   for both the RSV and LSV methods,
\begin{equation}\label{FluxUhUISuperProperty}
  \|(u_h-I_hu)(\cdot,t)\|_{\alpha}\lesssim h^{k+2}\|u\|_{k+3,\infty}.
\end{equation}
\end{coro}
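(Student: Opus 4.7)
The plan is to introduce a corrected error $\xi_h := u_h - I_hu + w \in V_h$, where $w$ is the global correction function defined in \eqref{globalcorrectionfunctionw}, and to derive a supercloseness estimate for $\xi_h$ in the flux-weighted norm. The key observation is that the Galerkin orthogonality \eqref{Galerkin orthogonality} holds element-wise: since any test function supported on a single element is admissible in $\mathcal{V}_h$, we have $a_{h,i}(u-u_h, v_h^*) = 0$ for every $i$. Multiplying by $\tilde{\alpha}_i^2$ and summing yields $A_h(u-u_h, v_h^*) = 0$, and therefore
\begin{equation*}
A_h(\xi_h, v_h^*) = A_h(u - I_hu + w, v_h^*), \qquad \forall v_h \in V_h,\ v_h^* = Tv_h.
\end{equation*}

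Next I would invoke Theorem \ref{PropertyForCorrectionFw}, specifically estimate \eqref{PropertyForCorrectionFw3}, to bound the right-hand side by $h^{k+2}\|u\|_{k+3,\infty}\|v_h\|_\alpha$. Choosing $v_h = \xi_h$ and applying the flux-norm stability inequality \eqref{new stablity for RSV} for RSV or \eqref{new stablity for LSV} for LSV (both valid for any $v\in V_h$) produces
\begin{equation*}
\frac{1}{2}\frac{d}{dt}\|\xi_h\|_E^2 \lesssim |A_h(\xi_h, \xi_h^*)| + \|\xi_h\|_E^2 \lesssim h^{k+2}\|u\|_{k+3,\infty}\|\xi_h\|_\alpha + \|\xi_h\|_E^2,
\end{equation*}
where $\|\cdot\|_E$ denotes $\|\cdot\|_\alpha$ for RSV and $|\|\cdot|\|_\alpha$ for LSV (the two being equivalent). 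Young's inequality then absorbs the cross term, leading to $\frac{d}{dt}\|\xi_h\|_E^2 \lesssim \|\xi_h\|_E^2 + h^{2k+4}\|u\|_{k+3,\infty}^2$.

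The prescribed initial data $u_h(\cdot,0) = I_hu_0$ gives $\xi_h(\cdot,0) = w(\cdot,0)$, and the second estimate in \eqref{PropertyForCorrectionFw1} already controls $\|w(\cdot,0)\|_\alpha \lesssim h^{k+2}\|u_0\|_{k+3,\infty}$. A Gronwall argument then delivers $\|\xi_h(\cdot,t)\|_\alpha \lesssim h^{k+2}\sup_{\tau \in [0,t]}\|u(\cdot,\tau)\|_{k+3,\infty}$. Finally, by the triangle inequality and the bound $\|w(\cdot,t)\|_\alpha \lesssim h^{k+2}\|u\|_{k+3,\infty}$ from \eqref{PropertyForCorrectionFw1},
\begin{equation*}
\|(u_h - I_hu)(\cdot,t)\|_\alpha \le \|\xi_h(\cdot,t)\|_\alpha + \|w(\cdot,t)\|_\alpha \lesssim h^{k+2}\|u\|_{k+3,\infty},
\end{equation*}
which is the claimed estimate.

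Essentially all the heavy lifting has already been done inside Theorem \ref{PropertyForCorrectionFw}, so the corollary is really a clean packaging of the correction-function bound, the flux stability, and Gronwall's lemma. The only minor subtlety is that $\xi_h$ is not itself a solution of the SV scheme, but since the flux-norm stability is stated for arbitrary $v \in V_h$, this causes no real obstacle; the true substance of the superconvergence sits in the construction of $w$ and the cancellation encoded in \eqref{PropertyForCorrectionFw3}.
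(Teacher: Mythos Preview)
Your proof is correct and follows essentially the same route as the paper's own argument. Your $\xi_h = u_h - I_hu + w$ is exactly the paper's $u_h - u_I$ (with $u_I := I_hu - w$), and both proofs combine the elementwise Galerkin orthogonality, the correction estimate \eqref{PropertyForCorrectionFw3}, the flux-norm stability \eqref{new stablity for RSV}--\eqref{new stablity for LSV}, Gronwall, and the bound \eqref{PropertyForCorrectionFw1} on $\|w\|_\alpha$ in the same order; your explicit justification of why orthogonality lifts to the $\tilde\alpha_i^2$-weighted form $A_h$ is a point the paper leaves implicit.
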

\begin{proof}
 Let $u_I=I_hu-w$.
For the RSV scheme, by choosing $v_h=u_h-{u}_I$ in \eqref{new stablity for RSV} and using \eqref{PropertyForCorrectionFw3},  we have
\[
  \frac{d}{dt}\|u_h- {u}_I\|_{\alpha}^2\lesssim |A(u- {u}_I,(u_h-{u}_I)^*)|+\|u_h-{u}_I\|_\alpha^2\lesssim h^{2(k+2)}\|u\|^2_{k+3,\infty}+\|u_h-{u}_I\|_{\alpha}^2.
\]
Due to the special choice of  the initial solution and
  the Gronwall inequality,   we get
\begin{equation*}
  \|u_h(\cdot,t)- {u}_I(\cdot,t)\|_{\alpha}\lesssim \|w(\cdot,0)\|_{\alpha}+h^{k+2}\|u\|_{k+3,\infty},
\end{equation*}
  which yields, together with the triangle  inequality and \eqref{PropertyForCorrectionFw1}, that
\begin{equation*}
  \|(u_h-I_hu)(\cdot,t)\|_{\alpha}\lesssim \|w(\cdot,t)\|_{\alpha}+\|u_h(\cdot,t)- {u}_I(\cdot,t)\|_{\alpha}\lesssim h^{k+2}\|u\|_{k+3,\infty}.
\end{equation*}
    Taking $v_h=u_h-{u}_I$ in \eqref{new stablity for LSV}  and using the equivalence between the $\|\cdot\|$ and $|\|\cdot|\|$ norm, we can prove that the same result  still
     holds true for LSV methods.
\end{proof}

  Now we are ready to present the superconvergence results for the flux function approximation.

\begin{thm}\label{SVFluxsuperconvergenceProperty}
Let $u(\cdot,t)\in W^{k+3,\infty}$ be the solution of \eqref{eqn:para}, and $u_h$ be the SV  solution obtained by \eqref{SVBFscheme} with initial solution $u_h(\cdot,0)=I_hu_0$.
 Then for both the RSV and LSV methods, the flux function $\alpha u_h$ has the following superconvergence property:
\begin{equation}\label{SVFluxsuperconvergenceProperty1}
e_f: =\|\alpha u_h-\alpha u_I\|_0\lesssim h^{k+2}\|u\|_{k+3,\infty}.
\end{equation}
correspondingly,
\begin{equation}\label{SVFluxsuperconvergenceAtsomeSpecialPoint}
  e_{f,n}+e_{f,c}+e_{f,r}\lesssim h^{k+2}\|u\|_{k+3,\infty},\ \ e_{f,l}\lesssim h^{k+1}\|u\|_{k+3,\infty},
\end{equation}
where
\begin{eqnarray*}
  e_{f,n}^2 &:=& \frac{1}{N}\sum_{i=1}^N(\alpha u-\alpha \widehat{u}_h)^2(x_{i+\frac{1}{2}}),~~~~e_{f,c}^2:=\frac{1}{N}\sum_{i=1}^N\left(\frac{1}{h_i}\int_{\mathbf{V}_i} (\alpha u-\alpha u_h)\text{d}x\right)^2,\\
  e_{f,r}^2 &:=& \frac{1}{N}\sum_{i=1}^N\sum_{j=1}^{k+1}(\alpha u-\alpha u_h)^2(y_{i,j}),~~e_{f,l}^2:=\frac{1}{N}\sum_{i=1}^N\sum_{j=1}^{k}(\alpha \partial_xu-\alpha \partial_xu_h)^2(z_{i,j}),
\end{eqnarray*}
where $\{y_{i,j}\}_{j=1}^{k+1}$ are the interpolation nodes of the interpolation operator $I_h$ defined in \eqref{Newinterpolation}, and $\{z_{i,j}\}_{j=1}^{k}$ are the roots of $\partial_x (\prod_{j=1}^{k+1}(x-y_{i,j}))$.
\end{thm}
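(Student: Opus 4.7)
The plan is to lift the $\alpha$-weighted superconvergence estimate $\|(u_h-I_hu)(\cdot,t)\|_\alpha \lesssim h^{k+2}\|u\|_{k+3,\infty}$ of Corollary~\ref{coro:1} to estimates on $\alpha(u-u_h)$ in various pointwise and averaged norms, propagating it through inverse and inverse-trace inequalities. All along I will separate the ``regular'' elements ${\bf V}_i\in\Omega_1\cup\Omega_2$, on which $\|\alpha\|_{0,\infty,{\bf V}_i}\lesssim\tilde{\alpha}_i$, from the $O(1)$ ``turning-point'' elements ${\bf V}_i\in\Omega_3$, on which $\|\alpha\|_{0,\infty,{\bf V}_i}\lesssim h$ by~\eqref{xishuproperty1}; the plain $L^2$ estimate $\|u_h-I_hu\|_0\lesssim h^{k+1}$ from Theorem~\ref{ConvegenceForRRSV} will replace the $\alpha$-norm bound on $\Omega_3$. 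The main difficulty throughout is exactly this $\Omega_3$ bookkeeping: Corollary~\ref{coro:1} gives essentially no useful information on those elements because the $\alpha$-weight is $O(h)$-small there, but the prefactor $\alpha$ in the quantity to be estimated is also $O(h)$-small, and the two smallnesses combine to compensate only if tracked simultaneously.

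For~\eqref{SVFluxsuperconvergenceProperty1} I set $\eta := u_h - u_I = (u_h-I_hu) + w$ and split $\|\alpha\eta\|_0\le\|\alpha(u_h-I_hu)\|_0 + \|\alpha w\|_0$. On ${\bf V}_i\in\Omega_1\cup\Omega_2$, the relation $\|\alpha\|_{0,\infty,{\bf V}_i}\lesssim\tilde{\alpha}_i$ together with Corollary~\ref{coro:1} gives $\|\alpha(u_h-I_hu)\|_{0,\Omega_1\cup\Omega_2}\lesssim\|u_h-I_hu\|_\alpha\lesssim h^{k+2}$; on $\Omega_3$ the product $\|\alpha\|_{0,\infty,\Omega_3}\|u_h-I_hu\|_0\lesssim h\cdot h^{k+1}$ is of the same size. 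For $\|\alpha w\|_0$, the factor $1/\|\alpha\|_{0,\infty,{\bf V}_i}$ in~\eqref{correctionestimatew1} and~\eqref{correctionestimatew22} cancels exactly against the outer $\alpha$, giving $\|\alpha w\|_{0,{\bf V}_i}\lesssim h_i^{k+5/2}$ on $\Omega_1\cup\Omega_2$ (while $w=0$ on $\Omega_3$), and summing across the $N\sim h^{-1}$ elements yields an even better $O(h^{k+2})$.

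For the nodal flux $e_{f,n}$, the key identity is $\widehat{u_I}|_{i+\frac{1}{2}}=u(x_{i+\frac{1}{2}})$: indeed $\widehat{I_hu}|_{i+\frac{1}{2}}=u(x_{i+\frac{1}{2}})$ by~\eqref{fluxproperty}, while $\widehat{w}|_{i+\frac{1}{2}}=0$ by~\eqref{PropertyForCorrectionFw1}, so $(\alpha u-\alpha\widehat{u}_h)(x_{i+\frac{1}{2}}) = \alpha(x_{i+\frac{1}{2}})\widehat{\eta}(x_{i+\frac{1}{2}})$. An inverse trace inequality $|\eta^\pm(x_{i+\frac{1}{2}})|^2\lesssim h_i^{-1}\|\eta\|_{0,{\bf V}_i}^2$ (or with ${\bf V}_{i+1}$, depending on the upwind direction), multiplied by $\alpha^2(x_{i+\frac{1}{2}})\lesssim\tilde{\alpha}_i^2$ on $\Omega_1\cup\Omega_2$ or $\lesssim h^2$ at turning-point interfaces, produces after summation $\sum_i(\alpha\widehat{\eta})^2(x_{i+\frac{1}{2}})\lesssim h^{-1}\|\eta\|_\alpha^2 + h\|\eta\|_{0,\Omega_3}^2\lesssim h^{2k+3}$; dividing by $N\sim h^{-1}$ and taking a square root proves $e_{f,n}\lesssim h^{k+2}$. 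The estimate for $e_{f,r}$ is analogous: since $(u-I_hu)(y_{i,j})=0$, one has $(\alpha u-\alpha u_h)(y_{i,j}) = \alpha(y_{i,j})(-\eta + w)(y_{i,j})$, and the pointwise inverse inequality $|p(y_{i,j})|^2\lesssim h_i^{-1}\|p\|_{0,{\bf V}_i}^2$ for polynomials $p$ reduces it to the argument just given.

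For $e_{f,c}$ I split $\int_{{\bf V}_i}\alpha(u-u_h) = \int_{{\bf V}_i}\alpha(u-I_hu) + \int_{{\bf V}_i}\alpha w - \int_{{\bf V}_i}\alpha\eta$: the last two terms yield the required order by Cauchy--Schwarz together with the bounds on $\alpha w$ and $\alpha\eta$ just established, and the first gains an extra factor $h$ via the Newton-form residual $u-I_hu=\widetilde{u}\,\widetilde{L}_{i,k+1}$ with $(\widetilde{L}_{i,k+1},1)_i=0$ recalled in~\eqref{SuperPropertyForP0V1}. Finally, for $e_{f,l}$, the choice of $z_{i,j}$ as roots of $\partial_x\!\prod_{j=1}^{k+1}(x-y_{i,j})$ means that the Newton residual satisfies $|\partial_x(u-I_hu)(z_{i,j})|\lesssim h_i^{k+1}\|u\|_{k+2,\infty,{\bf V}_i}$, one order better than the generic $h_i^k$; the polynomial pieces $\partial_x\eta$ and $\partial_x w$ are handled by $|\partial_x p(z_{i,j})|^2\lesssim h_i^{-3}\|p\|_{0,{\bf V}_i}^2$, which after multiplication by $\alpha$ and the usual $\Omega_3$ bookkeeping yields $e_{f,l}\lesssim h^{k+1}$ and completes the proof.
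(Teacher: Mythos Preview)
Your argument is correct and follows essentially the same route as the paper: both rely on Corollary~\ref{coro:1} for $\|u_h-I_hu\|_\alpha\lesssim h^{k+2}$, Theorem~\ref{ConvegenceForRRSV} for $\|u_h-I_hu\|_0\lesssim h^{k+1}$, the Newton-remainder identity $(\widetilde{L}_{i,k+1},1)_i=0$ for the cell-average term, and the same inverse/trace inequalities. Two small remarks. First, your assertion $\|\alpha\|_{0,\infty,{\bf V}_i}\lesssim\tilde\alpha_i$ on $\Omega_1\cup\Omega_2$ is only true up to an $O(h)$ correction; the paper handles this uniformly by writing $\alpha^2\lesssim(\alpha-\alpha_{i\pm\frac12})^2+\tilde\alpha_i^2\lesssim h^2+\tilde\alpha_i^2$, after which the $h^2$-term is absorbed by the plain $L^2$ bound, so your ``$\Omega_3$ bookkeeping'' should in fact be applied on every element. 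Second, your detour through $\eta=u_h-u_I=(u_h-I_hu)+w$ and then back to the two pieces is unnecessary and mildly hazardous (it tempts one to use $\|\eta\|_0$, which would require an unproven bound on $\|w\|_0$); the paper simply works with $u_h-I_hu$ throughout, which is cleaner and avoids this detour entirely.
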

\begin{proof}
Firstly, a direct calculation yields that
\begin{eqnarray}\nonumber
  \|\alpha u_h-\alpha u_I\|_{0,\mathbf{V}_i}^2
   &\lesssim& \int_{\mathbf{V}_i}(\alpha-\alpha_{i-\frac{1}{2}})^2 (u_h-u_I)^2\text{d}x +\int_{\mathbf{V}_i}(\alpha-\alpha_{i+\frac{1}{2}})^2 (u_h-u_I)^2\text{d}x+\|u_h-u_I\|_{\alpha,\mathbf{V}_i}^2\\\nonumber
   &\lesssim& h_i^2\|u_h-u_I\|_{0,\mathbf{V}_i}+\|u_h-u_I\|_{\alpha,\mathbf{V}_i}^2
\end{eqnarray}
The estimate \eqref{SVFluxsuperconvergenceProperty1} follows immediately by  using  Corollary  \ref{coro:1} and Theorem  \ref{ConvegenceForRRSV}.

Secondly, we show the first inequality of \eqref{SVFluxsuperconvergenceAtsomeSpecialPoint}.
On the one hand, by applying \eqref{fluxproperty}  and the  inverse inequality,  we have
\begin{eqnarray*}\nonumber
  |e_{f,n}|^2&=&\frac{1}{N}\sum_{i=1}^N\alpha^2(x_{i+\frac{1}{2}})(I_hu-\widehat{u}_h)^2(x_{i+\frac{1}{2}})\\
  &\lesssim & \sum_{i=1}^{N}\left(h^2\|I_hu-u_h\|_{0,\mathbf{V}_i}^2+\|I_hu-u_h\|_{\alpha,\mathbf{V}_i}^2\right).
\end{eqnarray*}
On the other hand, by choosing $v=1$ in \eqref{SuperPropertyForP0V1}, we have  for all  $\mathbf{V}_i\in\Omega_1\cup\Omega_2$,
\begin{equation}\label{FluxuhuIsuperconvergenceProperty1}
  \int_{\mathbf{V}_i}\alpha (u-I_hu)\text{d}x=\int_{\mathbf{V}_i}(\alpha-\alpha_{i+\frac 12}) (u-I_hu)\text{d}x+\alpha_{i+\frac 12}\int_{\mathbf{V}_i} (u-I_hu)\text{d}x\lesssim h_i^{k+3}\|u\|_{k+3,\mathbf{V}_i,\infty}.
\end{equation}
 As for  $\mathbf{V}_i\in\Omega_3$,  we have  $\alpha(x_{i-\frac{1}{2}})\alpha(x_{i-\frac{1}{2}})\leq 0$ in $\Omega_3$.  Then here exists at least one point $\theta_i\in \mathbf{V}_i$ such that $\alpha(\theta_i)=0$, indicating that \eqref{FluxuhuIsuperconvergenceProperty1} still holds in $\mathbf{V}_i\in\Omega_3$. Therefore,
\begin{equation*}
  \frac{1}{N}\sum_{i=1}^N\left(\frac{1}{h_i}\int_{\mathbf{V}_i}\alpha (u-I_hu)\text{d}x\right)^2\lesssim \frac{1}{N}\sum_{i=1}^Nh_i^{2k+4}\|u\|_{k+3,\mathbf{V}_i,\infty}^2\lesssim h^{2k+4}\|u\|_{k+3,\infty}^2.
\end{equation*}
By applying \eqref{SVFluxsuperconvergenceProperty1} and Cauchy-Schwarz inequality, we derive that
\begin{equation*}
  \frac{1}{N}\sum_{i=1}^N\left(\frac{1}{h_i}\int_{\mathbf{V}_i}\alpha (I_hu-u_h)\text{d}x\right)^2\lesssim  \|I_hu-u_h\|^2_{\alpha}\lesssim h^{2k+4}\|u\|_{k+3,\infty}^2.
  \end{equation*}
Thus, the application of triangle inequality yields that
\begin{equation*}
  e_{f,c}^2\le \frac{1}{N}\sum_{i=1}^N\left(\frac{1}{h_i}\int_{\mathbf{V}_i}\alpha (I_hu-u)\text{d}x\right)^2+ \frac{1}{N}\sum_{i=1}^N\left(\frac{1}{h_i}\int_{\mathbf{V}_i}\alpha (I_hu-u_h)\text{d}x\right)^2  \lesssim h^{2k+4}\|u\|_{k+3,\infty}^2.
\end{equation*}
 We next  estimate the error  $e_{f,r}$.  Using the inverse inequality, we get
\begin{eqnarray*}
 e_{f,r}= \frac{1}{N} \sum_{i=1}^N\sum_{j=1}^{k+1}\alpha^2(y_{i,j}) (I_hu-u_h)^2(y_{i,j})& \lesssim & \sum_{i=1}^N\sum_{j=1}^{k+1}  \alpha^2(x_{i,j}) \|I_hu-u_h\|_{0,\mathbf{V}_i}^2 \\
   &\lesssim&  \sum_{i=1}^N \left(h_i^2 \|I_hu-u_h\|_{0,\mathbf{V}_i}^2+\|I_hu-u_h\|_{\alpha,\mathbf{V}_i}^2\right).
\end{eqnarray*}
Therefore the  first inequality of \eqref{SVFluxsuperconvergenceAtsomeSpecialPoint} follows from the conclusions in Corollary  \ref{coro:1} and Theorem  \ref{ConvegenceForRRSV}.

Finally, by using the Newton-interpolating-remainder representation, we obtain that
\begin{equation*}
  u(x)-I_hu(x)=u[y_{i,1},\ldots,y_{i,k},y_{i,k+1},x]\prod_{j=1}^{k+1}(x-y_{i,j}):=\widetilde{u}(x)\omega(x),
\end{equation*}
which indicates that at the roots of $\omega(x)$,   there holds
\begin{equation*}
  |\partial_x(u-I_hu)(z_{i,j},t)|\lesssim h_i^{k+1}\|\partial_x\widetilde{u}(x)\|_{0,\mathbf{V}_i,\infty}\lesssim h^{k+1}\|u\|_{k+2,\infty}.
\end{equation*}
In addition, by using \eqref{FluxUhUISuperProperty} and the inverse inequality, we get
\begin{eqnarray*}\nonumber
  \frac{1}{N}\sum_{i=1}^N\sum_{j=1}^k\alpha^2(z_{i,j})\partial_x(u_h-I_hu)^2(z_{i,j},t)&\lesssim& \sum_{i=1}^N\sum_{j=1}^k\alpha^2(z_{i,j}) \|\partial_x(u_h-I_hu)\|_{0,\mathbf{V}_i}^2\\
  &\lesssim& h^{-2}\|u_h-I_hu\|_{\alpha}^2 +\|u_h-I_hu\|_{0}^2.
\end{eqnarray*}
 Therefore,
\begin{equation*}
   e_{f,l}^2=\frac{1}{N}\sum_{i=1}^N\sum_{j=1}^k\alpha^2(z_{i,j})\partial_x(u-I_hu+I_hu-u_h)^2(z_{i,j},t)\lesssim h^{2k+2}\|u\|_{k+3,\infty}^2.
\end{equation*}
The proof is completed.
\end{proof}
\subsection{Superconvergence for the SV solution approximation}

 In this subsection, we establish the superconvergence properties for solution approximation of the LSV and RSV methods.

 Assume that the smooth function $\alpha(x)$ has only a finite number of zeros
on $\Omega$. For convenience, we suppose that $\alpha$ has only one zero point at $x=0$ and there exists a positive integer $m$ such that
\begin{equation}\label{sufficientfunctioncondition}
  \alpha(0)=\alpha'(0)=\cdots=\alpha^{(m-1)}(0)=0,\alpha^{m}(0)\neq 0.
\end{equation}
Following the similar idea of \cite{Cao-Shu-ZhangM2AN}, we  slightly  modify  the correction functions $w_1$ and $w_2$ defined in \eqref{local correction w1} and \eqref{local correction w2}, i.e.,
\begin{eqnarray}\label{modifiedcorrectionfunctionw}
\widetilde{w}_i|_{\mathbf{V}_j}=\left \{
\begin{array}{ll}
0,   &\mathbf{V}_j\subset \Lambda=[0,x_{i_0+\frac{1}{2}}],\\
w_i,   &\mathbf{V}_j\subset \Lambda^+=\Omega\setminus\Lambda, \\
\end{array}
\right.
\end{eqnarray}
where $i\in\mathbb{Z}_2$, the positive integer $i_0$ satisfies that $x_{i_0-\frac{1}{2}}\leq h^{\frac{1}{m'}}\leq x_{i_0+\frac{1}{2}}$, and $m'=\min\{m,k+3\}$. Then we  define the global correction function $\widetilde{w}$  by
\begin{eqnarray}\label{globalmodifiedcorrectionfunction}
\widetilde{w}|_{\mathbf{V}_i}=\left \{
\begin{array}{ll}
\widetilde{w}_1,   &if~\alpha|_{i-\frac{1}{2}}\geq 0,\alpha|_{i+\frac{1}{2}}> 0,\\
\widetilde{w}_2,   &if~\alpha|_{i-\frac{1}{2}}\leq 0,\alpha|_{i+\frac{1}{2}}< 0, \\
0,     &otherwise.\\
\end{array}
\right.
\end{eqnarray}

\begin{lmm}Let $u\in W^{k+3,\infty}$ be the solution of \eqref{eqn:para}, $\alpha(x)$ be a sufficiently smooth function satisfying \eqref{sufficientfunctioncondition}, and $\widetilde{w}_i(i\in\mathbb{Z}_2)$ be the modified correction functions defined by \eqref{modifiedcorrectionfunctionw}. Then
\begin{equation}\label{modifiedcorrectionfunctionsproperty}
  \sum_{i=1}^2\left(\|\widetilde{w}_i\|_0+\|\partial_t\widetilde{w}_i\|_0\right)\lesssim h^{k+1+\frac{1}{2m'}}\|u\|_{k+3,\infty}.
\end{equation}
Furthermore,  it holds that
\begin{equation}\label{bmodifiedcorrectionfunctionsproperty}
  b(u-I_hu+\widetilde{w},v_h^*)\lesssim h^{k+1+\frac{1}{2m'}}\|u\|_{k+3,\infty}\|v_h\|_0,~v_h\in V_h.
\end{equation}
\end{lmm}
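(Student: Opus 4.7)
The plan is to establish both estimates by exploiting that $\widetilde{w}_i$ vanishes on the cut-off region $\Lambda$ near the turning point and that on $\Lambda^+$ the coefficient $|\alpha|$ is quantitatively bounded below in terms of $h$. The width $|\Lambda|\sim h^{1/m'}$ together with the definition $m'=\min\{m,k+3\}$ are chosen precisely to balance two competing $h$-powers.

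For \eqref{modifiedcorrectionfunctionsproperty}, since $\widetilde{w}_i\equiv 0$ on $\Lambda$, only elements $\mathbf{V}_j\subset\Lambda^+$ contribute, so
\[
\|\widetilde{w}_i\|_0^2+\|\partial_t\widetilde{w}_i\|_0^2=\sum_{\mathbf{V}_j\subset\Lambda^+}\bigl(\|w_i\|_{0,\mathbf{V}_j}^2+\|\partial_t w_i\|_{0,\mathbf{V}_j}^2\bigr).
\]
On each such $\mathbf{V}_j$ I would apply the per-element bounds \eqref{correctionestimatew1} and \eqref{correctionestimatew22}, together with the lower bound $|\alpha(x)|\gtrsim |x|^m$ that follows from Taylor expansion at $x=0$ and \eqref{sufficientfunctioncondition}. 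This reduces the problem to estimating $\sum_{\mathbf{V}_j\subset\Lambda^+}h_j^{2k+5}|x_j|^{-2m}$; passing from the sum to the integral $\int_{h^{1/m'}}^{2\pi}x^{-2m}\,dx$ and inserting the cut-off yields the target order $h^{2(k+1)+1/m'}$, and taking square roots delivers the first estimate.

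For \eqref{bmodifiedcorrectionfunctionsproperty}, I would split the sum over three disjoint pieces: $(\Omega_1\cup\Omega_2)\cap\Lambda^+$, $(\Omega_1\cup\Omega_2)\cap\Lambda$, and $\Omega_3$. On the first piece, $\widetilde{w}$ coincides with $w_1$ or $w_2$, so I invoke \eqref{Correctionproperty} and \eqref{Correctionpropertyw21} to rewrite $b_i(u-I_h u+\widetilde{w},v_h^*)$ as the sum of $(\partial_t w_i,v_h^*)_i$ and an $(\alpha(x_{i,j})-\overline{\alpha}_i)$-weighted point-value term; both are controlled by Cauchy-Schwarz, the inverse inequality, and the already-established first estimate \eqref{modifiedcorrectionfunctionsproperty}. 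On $(\Omega_1\cup\Omega_2)\cap\Lambda$ we have $\widetilde{w}=0$, and the numerical-flux exactness together with the vanishing of $(u-I_h u)(x_{i,j})$ at the interior partition nodes reduces $b_i(u-I_h u,v_h^*)$ to the time-derivative part $(u_t-I_h u_t,v_h^*)_i$, bounded by $h^{k+1}\|u_t\|_{k+1,\mathbf{V}_j}\|v_h\|_{0,\mathbf{V}_j}$; summing and using $|\Lambda|\lesssim h^{1/m'}$ via Cauchy-Schwarz provides the required $h^{1/(2m')}$ improvement. On $\Omega_3$, the estimate \eqref{124} together with $\|\alpha\|_{0,\infty,\Omega_3}\lesssim h$ (cf.\ \eqref{xishuproperty1}) and the fact that only finitely many elements belong to $\Omega_3$ yields a contribution of order $h^{k+3/2}$, which suffices since $1/(2m')\le 1/2$.

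The hard part is the exponent bookkeeping in the integral-comparison step for \eqref{modifiedcorrectionfunctionsproperty}, together with the matching of $h^{1/(2m')}$ gains coming from the $\Lambda^+$ and $\Lambda$ pieces in \eqref{bmodifiedcorrectionfunctionsproperty}. The design of $\widetilde{w}$ and the cut-off radius $h^{1/m'}$ is precisely what equalizes these two gains, and verifying this balance in both degeneracy regimes $m\le k+3$ and $m>k+3$ is the key technical step.
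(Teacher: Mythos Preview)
Your plan for \eqref{bmodifiedcorrectionfunctionsproperty} is essentially the paper's own argument: split over $(\Omega_1\cup\Omega_2)\cap\Lambda^+$, $(\Omega_1\cup\Omega_2)\cap\Lambda$, and $\Omega_3$, invoke the correction identities on $\Lambda^+$, pick up the factor $|\Lambda|^{1/2}\sim h^{1/(2m')}$ on $\Lambda$, and use $\#\Omega_3=O(1)$ together with $\|\alpha\|_{0,\infty,\Omega_3}\lesssim h$ for the last piece. One technical point you do not mention but the paper uses: the identities \eqref{Correctionproperty} and \eqref{Correctionpropertyw21} are stated only for $v\in\mathcal{P}_-(\mathbf V_i)$, so on $\Lambda^+$ you must treat $v_h\in\mathcal P_0(\mathbf V_i)$ separately, via the extra-order cancellation \eqref{SuperPropertyForP0V1}. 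This is not hard, but it is a genuine step.

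There is, however, a real gap in your argument for \eqref{modifiedcorrectionfunctionsproperty} in the regime $m>k+3$. Your claim that
\[
h^{2k+4}\int_{h^{1/m'}}^{2\pi} x^{-2m}\,dx \;\sim\; h^{2(k+1)+1/m'}
\]
is only correct when $m=m'$. In general the integral yields $h^{2k+4+(1-2m)/m'}$, and the exponent difference $2-2m/m'$ is strictly negative once $m>m'=k+3$; for large $m$ the single element adjacent to the cut-off already gives $\|w_1\|_{0,\mathbf V_j}\lesssim h^{k+5/2-m/(k+3)}$, which blows up. So the sum-to-integral comparison cannot deliver the stated order in that regime, and something beyond the one-step correction bound \eqref{correctionestimatew1} is required. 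The paper does not carry out this case either; it simply refers to \cite{Cao-Shu-ZhangM2AN}, where the correction machinery (and the origin of the cap $k+3$) is developed in full. Your proposal correctly flags the case split as ``the key technical step'', but the mechanism you describe does not close it.
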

\begin{proof}Here we omit the proof of \eqref{modifiedcorrectionfunctionsproperty} and refer to \cite{Cao-Shu-ZhangM2AN}
for more detailed information and discussions. We focus our attention to prove \eqref{bmodifiedcorrectionfunctionsproperty}.
Replacing  $w_1$ in  \eqref{Correctionproperty} by $\widetilde{w}_1$, and following the same argument as what we did in \eqref{Correctionproperty} yields
\begin{eqnarray*}\nonumber
 | b(u-I_h^-u+\widetilde{w}_1,v_h^*)|&=& |\sum_{\mathbf{V}_i\in \Lambda}(\partial_t(u-I_h^-u),v_h^*)_i+\sum_{\mathbf{V}_i\in \Lambda^+}(\partial_tw_1,v_h^*)_i-\sum_{\mathbf{V}_i\in \Lambda^+}\sum_{j=1}^k(\alpha(x_{i,j})-\overline{\alpha}_i)v_x(x_{i,j})w_1(x_{i,j})|\\ \label{123404321}
  &\lesssim& h^{k+1}\|u\|_{k+2,\infty}x_{i_0+\frac{1}{2}}^{\frac{1}{2}}\|v_h\|_{0,\Lambda}+\left(\|\widetilde{w}_1\|_0+\|\partial_t\widetilde{w}_1\|_0\right)\|v_h\|_0\\
 &\lesssim&  h^{k+1+\frac{1}{2m'}} \|u\|_{k+3,\infty}\|v_h\|_0,\ \ \ \forall v_h\in \mathcal{P}_{-}({\bf V}_i).
\end{eqnarray*}
 Here in the last step,  we have used  $x_{i_0+\frac{1}{2}}\lesssim h^{\frac{1}{m'}}$ and  \eqref{modifiedcorrectionfunctionsproperty}.
As for $v_h\in \mathcal{P}_{0}(\mathbf{V}_i)$, we have from
 \eqref{SuperPropertyForP0V1} and \eqref{modifiedcorrectionfunctionsproperty} that
\begin{equation*}
  |b(u-I_h^-u+\widetilde{w}_1,v_h^*)|=\sum_{\mathbf{V}_i\in \Lambda}(\partial_t(u-I_h^-u),v_h)_i+\sum_{\mathbf{V}_i\in \Lambda^+}(\partial_tw_1,v_h)_i\lesssim h^{k+1+\frac{1}{2m'}} \|u\|_{k+3,\infty}\|v_h\|_0.
\end{equation*}
Therefore,  for all $v_h\in V_h$, $v_h=v_0+v_1$, where $v_0\in \mathcal{P}_{0}(\mathbf{V}_i)$ and $v_1\in \mathcal{P}_{-}({\bf V}_i)$, we have
\begin{equation}\label{superconvergencepertyPart1}
 | b(u-I_h^-u+\widetilde{w}_1,v_h^*)|=|b(u-I_h^-u+\widetilde{w}_1,v_0^*)+b(u-I_h^-u+\widetilde{w}_1,v_1^*)|\lesssim h^{k+1+\frac{1}{2m'}} \|u\|_{k+3,\infty}\|v_h\|_0.
\end{equation}
Likewise, we get
 \begin{equation}\label{superconvergencepertyPart2}
  |b(u-I_h^+u+\widetilde{w}_2,v_h^*)|\lesssim h^{k+1+\frac{1}{2m'}} \|u\|_{k+3,\infty}\|v_h\|_0,\forall v_h\in V_h.
\end{equation}
Noticing that
 \begin{equation}\label{superconvergencepertyPart3}
  b(u-I_hu+\widetilde{w},v_h^*)=\sum_{\mathbf{V}_i\in \Omega_1}b_i(u-I_h^-u+\widetilde{w}_1,v_h^*)+\sum_{\mathbf{V}_i\in \Omega_2}b_i(u-I_h^+u+\widetilde{w}_2,v_h^*)+\sum_{\mathbf{V}_i\in \Omega_3}b_i(u-I_h^{\pm}u,v_h^*).
\end{equation}
By using the fact that $|\Omega_3|\lesssim h$, the inequalities \eqref{xishuproperty1} and \eqref{Interproperty1}, we obtain
\begin{eqnarray}\nonumber
  |\sum_{\mathbf{V}_i\in \Omega_3}b_i(u-I_h^{\pm}u,v_h^*)|&=& |\sum_{\mathbf{V}_i\in \Omega_3}(\partial_t(u-u_I),v_h^*)_i-\sum_{\mathbf{V}_i\in \Omega_3}\sum_{j=1}^kA_{i,j}\alpha(x_{i,j})(u-u_I)_x(x_{i,j})v_h(x_{i,j})|\\ \label{123454321}
  &\lesssim& h^{k+1+\frac{1}{2m'}} \|u\|_{k+3,\infty}\|v_h\|_0,\forall v_h\in V_h.
\end{eqnarray}
Then the inequality \eqref{bmodifiedcorrectionfunctionsproperty} follows from \eqref{superconvergencepertyPart1}, \eqref{superconvergencepertyPart2}, and \eqref{superconvergencepertyPart3}, and \eqref{123454321}.
\end{proof}
\begin{thm}\label{SVSolutionsuperconvergenceProperty}
Let $u\in W^{k+3,\infty}$ be the solution of \eqref{eqn:para}, and $u_h$ be the solution of \eqref{SVBFscheme} with the initial solution $u_h^0=I_hu_0$, where $I_h$ is defined by \eqref{Newinterpolation}. Suppose $\alpha(x)$ is a sufficiently smooth function satisfying \eqref{sufficientfunctioncondition}. Then for $m'=\min\{m,k+3\}$
\begin{equation}\label{SVSolutionIhsuperconvergenceProperty}
  e_u:=\|u_h-I_hu\|_0\lesssim h^{k+1+\frac{1}{2m'}} \|u\|_{k+3,\infty},
\end{equation}
  Moreover, there hold
  \begin{equation}\label{SVSolutionsuperconvergenceAtsomeSpecialPoint}
  e_{u,n}+e_{u,c}+e_{u,r}\lesssim h^{k+1+\frac{1}{2m'}}\|u\|_{k+3,\infty},\ \  e_{u,l}\lesssim h^{k+\frac{1}{2m'}}\|u\|_{k+3,\infty},
\end{equation}
where
\begin{eqnarray*}
  e_{u,n}^2 &:=& \frac{1}{N}\sum_{i=1}^N( u-\widehat{u}_h)^2(x_{i+\frac{1}{2}}),~~~~e_{u,c}^2:=\frac{1}{N}\sum_{i=1}^N\left(\frac{1}{h_i}\int_{\mathbf{V}_i} (u-u_h)\text{d}x\right)^2,\\
  e_{u,r}^2 &:=& \frac{1}{N}\sum_{i=1}^N\sum_{j=1}^{k+1}(u-u_h)^2(y_{i,j}),~~e_{u,l}^2:=\frac{1}{N}\sum_{i=1}^N\sum_{j=1}^{k}(u-u_h)^2(z_{i,j}),
\end{eqnarray*}
where the nodes $\{y_{i,j}\}_{j=1}^{k+1}$ and $\{z_{i,j}\}_{j=1}^{k}$ are the same as in Theorem 5.2.
\end{thm}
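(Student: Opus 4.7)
The argument parallels the flux-function superconvergence proof of Corollary~\ref{coro:1} and Theorem~\ref{SVFluxsuperconvergenceProperty}, but with two essential modifications: the global correction $w$ is replaced by the truncated correction $\widetilde{w}$ from \eqref{globalmodifiedcorrectionfunction}, and the stability used is the unweighted energy inequality \eqref{RRSVbasionAll}--\eqref{LSVbasionAll} rather than the flux-weighted one. Set $\widetilde{u}_I:=I_hu-\widetilde{w}$. By the Galerkin orthogonality \eqref{Galerkin orthogonality}, for any $v_h\in V_h$ and $v_h^*=Tv_h$,
\[
 a_h(u_h-\widetilde{u}_I,v_h^*)=a_h(u-I_hu+\widetilde{w},v_h^*).
\]

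Both $I_hu$ and $\widetilde{w}$ contribute trivially to the numerical flux: the former by \eqref{fluxproperty}, the latter because $\widetilde{w}$ inherits $w_1(x_{i+1/2}^-)=0$ from \eqref{local correction w1} and $w_2(x_{i-1/2}^+)=0$ from \eqref{local correction w2} and is identically zero on the layer $\Lambda$. Consequently the boundary terms in the representation \eqref{newb2form} of $b_{i,2}$ drop out and $a_h(u-I_hu+\widetilde{w},v_h^*)=b(u-I_hu+\widetilde{w},v_h^*)$. The key lemma \eqref{bmodifiedcorrectionfunctionsproperty} then bounds this quantity by $Ch^{k+1+\frac{1}{2m'}}\|u\|_{k+3,\infty}\|v_h\|_0$, where crucially the test-function norm is the unweighted $L^2$ norm that pairs with the energy stability. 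Taking $v=u_h-\widetilde{u}_I$ in \eqref{RRSVbasionAll} for RSV (and in \eqref{LSVbasionAll} for LSV, using the equivalence of $\|\cdot\|_0$ and $|\|\cdot|\|$) produces a Gronwall-type differential inequality. Since the initial error $(u_h-\widetilde{u}_I)(\cdot,0)=\widetilde{w}(\cdot,0)$ is already of size $h^{k+1+\frac{1}{2m'}}$ by \eqref{modifiedcorrectionfunctionsproperty}, Gronwall yields $\|u_h-\widetilde{u}_I\|_0\lesssim h^{k+1+\frac{1}{2m'}}\|u\|_{k+3,\infty}$, and combining with the triangle inequality and \eqref{modifiedcorrectionfunctionsproperty} once more gives \eqref{SVSolutionIhsuperconvergenceProperty}.

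The four discrete superconvergent quantities then follow from \eqref{SVSolutionIhsuperconvergenceProperty} together with the pointwise structure of $I_hu$. For $e_{u,n}$, the identity $\widehat{I_hu}(x_{i+1/2})=u(x_{i+1/2})$ reduces the sum to point values of the polynomial $u_h-I_hu$, controlled by the trace inverse inequality $|v(x_{i+1/2}^\pm)|\lesssim h_i^{-1/2}\|v\|_{0,\mathbf{V}_i}$; the factor $1/N\sim h$ then absorbs the $h^{-1}$ loss. The treatment of $e_{u,r}$ is analogous: $(u-I_hu)(y_{i,j})=0$ on $\Omega_1\cup\Omega_2$, while $|\Omega_3|\lesssim h$ contributes a subordinate term. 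For $e_{u,c}$, the cell-average of $u-I_hu$ on $\mathbf{V}_i\subset\Omega_1\cup\Omega_2$ is $O(h_i^{k+2})$ by the Newton remainder argument already used in \eqref{FluxuhuIsuperconvergenceProperty1}, while $|\Omega_3|\lesssim h$ renders its contribution harmless, and the numerical part is handled by Cauchy--Schwarz against $\|u_h-I_hu\|_0$. Finally, $e_{u,l}$ combines the local bound at the Gauss-type points $z_{i,j}$ with an application of the inverse inequality on $u_h-I_hu$; the slightly weaker rate $h^{k+\frac{1}{2m'}}$ reflects a one-order loss of the same type seen in $e_{f,l}$ in Theorem~\ref{SVFluxsuperconvergenceProperty}.

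The main obstacle has essentially been absorbed into the lemma \eqref{bmodifiedcorrectionfunctionsproperty}, which delicately balances the layer width $h^{1/m'}$ near the turning point of $\alpha$ against the size of the corrector. Within the present theorem, the only subtle point is verifying that the truncated corrector $\widetilde{w}$ retains the zero-numerical-flux property across the interface between $\Lambda$ and $\Lambda^+$, so that the reduction $a_h(\cdot,v_h^*)=b(\cdot,v_h^*)$ remains valid; this is where the specific choice of truncation point in \eqref{modifiedcorrectionfunctionw} matters. Once that is in place, the proof is a transcription of the flux-norm argument of Corollary~\ref{coro:1} with $\|\cdot\|_\alpha$ replaced by $\|\cdot\|_0$ throughout.
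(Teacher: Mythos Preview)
Your proposal is correct and follows essentially the same route as the paper's proof: you test the stability inequality \eqref{RRSVbasionAll}--\eqref{LSVbasionAll} with $v=u_h-I_hu+\widetilde{w}$, reduce $a_h$ to $b$ via the zero-flux property of $I_hu$ and $\widetilde{w}$, invoke \eqref{bmodifiedcorrectionfunctionsproperty}, and close with Gronwall plus \eqref{modifiedcorrectionfunctionsproperty}. The paper's proof is in fact terser than yours---it dispatches \eqref{SVSolutionsuperconvergenceAtsomeSpecialPoint} in one line by pointing back to Theorem~\ref{SVFluxsuperconvergenceProperty}---whereas you spell out each of $e_{u,n},e_{u,c},e_{u,r},e_{u,l}$ and correctly flag the interface-flux check for $\widetilde{w}$ as the one delicate point (the paper handles this with a single ``it is easy to verify''). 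One minor overreach: for $e_{u,r}$ you say $(u-I_hu)(y_{i,j})=0$ only on $\Omega_1\cup\Omega_2$, but since the $y_{i,j}$ are by definition the interpolation nodes of $I_h$ on each element, this vanishing holds everywhere including $\Omega_3$, so the $|\Omega_3|\lesssim h$ argument is not needed there.
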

\begin{proof}
By the definition of $\widetilde{w}$ in \eqref{globalmodifiedcorrectionfunction} and the property \eqref{PropertyForCorrectionFw1}, it is easy to verify that the numerical flux of $\widetilde{w}$ satisfies $\widehat{\widetilde{w}}|_{i+\frac{1}{2}}=0$ for all $i\in \mathbb{Z}_{N}.$ Consequently,
choosing $v=u_h-I_hu+\widetilde{w}$ in \eqref{RRSVbasionAll} and applying the property \eqref{fluxproperty} and \eqref{bmodifiedcorrectionfunctionsproperty}, we obtain that
\begin{eqnarray*}\nonumber
  \frac{d}{dt}\|u_h-I_hu+\widetilde{w}\|_0^2&\lesssim & \|u_h-I_hu+\widetilde{w}\|_0^2+|a_{h}(u-I_hu+\widetilde{w},(u_h-I_hu+\widetilde{w})^*)|\\ \nonumber
  &=& \|u_h-I_hu+\widetilde{w}\|_0^2+|b(u-I_hu+\widetilde{w},u_h^*-u_I^*+\widetilde{w}^*)|\\ \label{12345654321}
  &\lesssim&\|u_h-u_I+\widetilde{w}\|_0^2+h^{k+1+\frac{1}{2m'}} \|u\|_{k+3,\infty}\|u_h-u_I+\widetilde{w}\|_0.
\end{eqnarray*}
  Then \eqref{SVSolutionIhsuperconvergenceProperty} follows from the Gronwall inequality and \eqref{modifiedcorrectionfunctionsproperty}.
  The proof of \eqref{SVSolutionsuperconvergenceAtsomeSpecialPoint} is similar to that of  Theorem \ref{SVFluxsuperconvergenceProperty} and
  we omit it here.
\end{proof}

\section{Numerical Results}In this section, we present some numerical experiments to verify our theoretical findings.
We operate our programs in MATLAB 2019b. Uniform meshes of $n$ elements are used in our numerical experiments. We use the fourth-order Runge-Kutta method with time step $\Delta t=0.01\frac{1}{n}$ to reduce the time discretization.

{\it Example} 1. We consider the following equation with the periodic boundary condition:
\[
    u_t+(\sin(x)u)_x=g(x,t),\ \ (x,t)\in [0,2\pi]\times(0,\frac{\pi}{2}],\ \ u(x,0)=\sin(x).
\]
We choose $g(x,t)$ such that the exact solution to this problem is
$
   u(x,t)=\text{e}^{\sin(x-t)}.
$
  Note that $\alpha(x)=\sin(x)$ has three zeros $x=0,\pi, 2\pi$, and at these zeros,
 \[
     \partial_x\alpha(x)=\cos(x)\neq 0.
 \]
 Then \eqref{sufficientfunctioncondition} holds true with $m=1$, which indicates $m'=1$ in Theorem 5.3.

The problem is solved by the LSV and RSV schemes with $k=1,2,3$.
 Listed in Table 6.1 are
 the errors of $u-u_h$ in $L^2$ and  $L^{\infty}$  norms and the corresponding convergence rates,
 where the optimal convergence rate of $h^{k+1}$ for both LSV and RSV  methods are observed. These results confirm our theoretical results
 established in Theorem \ref{ConvegenceForRRSV}.

We also test the superconvergence phenomenon of the flux function approximation and solution itself approximation for both RSV and LSV,
and measure several errors between the numerical solution and the exact solution, which are defined in Theorems 5.2-5.3.
We list in Tables 6.2-6.3  and Tables 6.4-6.5 the numerical errors and the corresponding convergence rates for RSV methods and LSV methods, respectively.
For both   RSV and LSV methods,
we observe  a convergence rate of $k+2$ for the errors $e_{f,c}, e_{f,n}, e_{f}, e_{f,r}$ and a rate of $k+1$ for $e_{f,l}$, which are consistent with the theoretical findings in Theorem 5.2 and Corollary 5.2. Note that when $k=1$, there exists no superconvergence  phenomenon of the function value approximation  for the LSV method.
As for the solution itself approximation, we see  a convergence rate of  $(k+\frac{3}{2})$ for  $e_u$, $e_c$,  $e_{r}$ and $e_n$, and a rate of  $(k+\frac{1}{2})$ for $e_{u,l}$.
These results are consistent with the conclusions in Theorem 5.3.


\begin{table}[!h]
	\small{\caption{\emph{Errors and convergence rates of RSV and LSV methods for  Example 1. }}}
	\label{Tab:Eg1a}	\centering
	\begin{tabular}{ll|llll|llll}
		\hline
		$~ $  &       &\multicolumn{4}{c}{RSV}& \multicolumn{4}{c}{LSV} \\
		\hline
		$~~k$&$n $ &$\|u-u_h\|_0$   &order  &$\|u-u_h\|_{0,\infty}$  &order  &$\|u-u_h\|_0$   &order  &$\|u-u_h\|_{0,\infty}$  &order\\
		\hline
		\multirow{4}{*}{$k=1$}
		& $~128$ & 4.24E-4 &      &1.10E-3  &       & 6.31E-4 &      &1.60E-3  &      \\
		& $~256$ & 1.06E-4 &2.00  &2.73E-4 &$2.00$ & 1.58E-4 &2.00  &4.09E-4 &$2.00$\\
		& $~512$ & 2.64E-5 &2.00  &6.82E-5 &$2.00$ & 3.95E-5 &2.00  &1.02E-4 &$2.00$\\
		& $1024$ & 6.59E-6 &2.00  &1.71E-5 &$2.00$ & 9.87E-6 &2.00  &2.56E-5 &$2.00$\\
		\hline
		\multirow{4}{*}{$k=2$}
		& $~128$ & 2.81E-6 &      &8.02E-6 &       &4.42E-6 &      &1.34E-5 &              \\
		& $~256$ & 3.51E-7 &3.00  &1.00E-6 &$3.00$ &5.52E-7 &3.00  &1.67E-6 &$3.00$\\
		& $~512$ & 4.39E-8 &3.00  &1.25E-7 &$3.00$ &6.90E-8 &3.00  &2.09E-7 &$3.00$ \\
		& $1024$ & 5.49E-9 &3.00  &1.57E-8 &$3.00$ &8.63E-9 &3.00  &2.61E-8 &$3.00$\\
		\hline
		\multirow{4}{*}{$k=3$}
		& $~~32$ & 4.82E-6 &      &1.84E-5 &       &7.66E-6 &      &3.27E-5 &      \\
		& $~~64$ & 2.97E-7 &4.02  &1.19E-6 &$3.95$ &4.75E-7 &4.01  &2.09E-6 &$3.97$\\
		& $~128$ & 1.84E-8 &4.01  &7.50E-8 &$3.99$ &2.96E-8 &4.00  &1.31E-7 &$3.99$\\
		& $~256$ & 1.15E-9 &4.01  &4.70E-9 &$4.00$ &1.85E-9 &4.00  &8.22E-9 &$4.00$\\
		\hline
	\end{tabular}
\end{table}

\begin{table}[!h]
	\centering
	\small{\caption{\emph{Errors and   convergence orders for the RSV flux function approximation  for Example 1.}}}
	\label{Tab:Eg1b}
	\begin{tabular}{lllllllllllll}
		\hline
		$~~~~k$&$~~n $ &~~$e_{f}$   &order &~~$e_{f,c}$  &order  &~~$e_{f,r}$&order  &~~$e_{f,n}$  &order &~~$e_{f,l}$  &order\\
		\hline
		\multirow{4}{*}{$k=1$}
		&$~128$  & 2.07E-7 &      &1.60E-6 &     &3.11E-6 &      &3.13E-6 &         &2.35E-4 &   \\
		&$~256$  & 3.02E-8 &2.78  &1.94E-7 &3.04 &3.97E-7 &$2.97$&3.98E-7 &$~2.98$  &5.94E-5 &$~1.98$   \\
		&$~512$  & 4.05E-9 &2.90  &2.38E-8 &3.02 &4.97E-8 &$3.00$&4.98E-8 &$~3.00$  &1.49E-5 &$~2.00$   \\
		&$1024$  & 5.23E-10 &2.95  &2.95E-9 &3.01 &6.22E-9 &$3.00$&6.22E-9 &$~3.00$  &3.71E-6 &$~2.00$   \\
		\hline
		\multirow{4}{*}{$k=2$}
		&$~128$  & 8.96E-10 &      &6.68E-10 &      &2.02E-8  &       &2.02E-8  &         &2.46E-6 &   \\
		&$~256$  & 3.55E-11 &4.66  &2.26E-11 &4.89  &1.25E-9  &$4.02$&1.25E-9  &$~4.02$  &3.00E-7 &$~3.03$   \\
		&$~512$  & 1.48E-12 &4.59  &8.24E-13 &4.78  &7.73E-11 &$4.01$&7.73E-11  &$~4.01$  &3.71E-8 &$~3.01$   \\
		&$1024$  & 6.36E-14 &4.54  &3.49E-14 &4.56  &4.82E-12 &$4.00$&4.82E-12  &$~4.00$  &4.63E-9 &$~3.00$   \\
		\hline
		\multirow{4}{*}{$k=3$}
		&$~~32$  & 9.45E-8 &       &2.78E-8  &      &2.11E-7  &      &1.47E-7  &         &4.92E-6 &   \\
		&$~~64$  &2.17E-9 &5.44  &5.90E-10 &5.56  &5.74E-9  &$5.20$&4.29E-9  &$~5.09$  &3.27E-7 &$~3.91$   \\
		&$~128$  & 4.92E-11 &5.46  &1.38E-11 &5.42  &1.51E-10 &$5.25$&1.22E-10 &$~5.14$  &2.04E-8 &$~4.00$   \\
		&$~256$  & 1.12E-12 &5.45  &3.25E-13 &5.43  &4.08E-12 &$5.21$&3.66E-12 &$~5.06$  &1.26E-9 &$~4.01$   \\
		\hline
	\end{tabular}
\end{table}

\begin{table}[!h]
	\centering
	\small{\caption{\emph{Errors and  convergence rates for the RSV solution itself approximation for Example 1.}}}
	\label{Tab:Eg1c}
	\begin{tabular}{lllllllllllll}
		\hline
		$~~~~k$&$~~n $ &~~$e_{u}$   &order &~~$e_{u,c}$  &order  &~~$e_{u,r}$&order  &~~$e_{u,n}$  &order &~~$e_{u,l}$  &order\\
		\hline
		\multirow{4}{*}{$k=1$}
		&$~128$  & 2.60E-7 &      &2.51E-5 &       &2.51E-5 &      &2.41E-5 &         &5.30E-4 &   \\
		&$~256$  & 3.82E-8 &2.77  &4.43E-6 &$2.50$ &4.39E-6 &$2.59$&4.24E-6 &$~2.51$  &1.66E-5 &$~1.66$   \\
		&$~512$  & 5.36E-9 &2.83  &7.84E-7 &$2.50$ &7.71E-7 &$2.55$&7.45E-7 &$~2.51$  &5.42E-5 &$~1.61$   \\
		&$1024$  & 7.58E-10 &2.82  &1.39E-7 &$2.50$ &1.36E-7 &$2.53$&1.31E-7 &$~2.50$  &1.83E-6 &$~1.56$   \\
		\hline
		\multirow{4}{*}{$k=2$}
		&$~128$  & 1.56E-8  &      &7.55E-9 &       &5.54E-8 &      &5.45E-8 &         &6.20E-6 &   \\
		&$~256$  & 1.59E-9  &3.30  &7.10E-10&$3.41$ &4.63E-9 &$3.58$&4.55E-9 &$~3.58$  &1.01E-6 &$~2.62$   \\
		&$~512$  & 1.50E-10 &3.41  &6.67E-11&$3.42$ &3.97E-10 &$3.54$&3.90E-10 &$~3.54$  &1.71E-7 &$~2.56$   \\
		&$1024$  & 1.36E-11 &3.46  &6.12E-12&$3.45$ &3.46E-11 &$3.52$&3.40E-11&$~3.52$ &2.96E-8 &$~2.53$   \\
		\hline
		\multirow{4}{*}{$k=3$}
		&$~~32$  & 6.05E-7 &       &1.47E-7 &        &9.91E-7 &      &6.56E-7 &          &2.11E-5 &   \\
		&$~~64$  & 2.75E-8 &4.46  &6.55E-9 &$4.49$  &4.70E-8 &$4.40$&2.89E-8 &$~4.51$  &1.78E-6 &$~3.57$   \\
		&$~128$  & 1.23E-9 &4.54  &2.95E-10 &$4.48$  &2.15E-9 &$4.45$&1.28E-9 &$~4.50$  &1.54E-7 &$~3.53$   \\
		&$~256$  & 5.48E-11 &4.52  &1.32E-11 &$4.48$ &9.66E-11 &$4.48$&5.66E-11 &$~4.50$ &1.34E-8 &$~3.52$   \\
		\hline
	\end{tabular}
\end{table}

\begin{table}[!h]
	\centering
	\small{\caption{\emph{Errors and   convergence orders for the LSV flux function approximation  for Example 1.}}}
	\label{Tab:Eg1d}
	\begin{tabular}{lllllllllllll}
		\hline
		$~~~~k$&$~~n $ &~~$e_f$   &order &~~$e_{f,c}$  &order  &~~$e_{f,r}$&order  &~~$e_{f,n}$  &order &~~$e_{f,l}$  &order\\
		\hline
		\multirow{4}{*}{$k=1$}
		&$~128$  & 7.36E-6 &      &9.80E-5 &       &7.77E-6 &      &5.32E-6 &         &3.10E-4 &   \\
		&$~256$  & 1.85E-6 &1.99  &2.45E-5 &$2.00$ &1.99E-6 &$1.96$&1.33E-6 &$~2.00$  &8.16E-5 &$~2.00$   \\
		&$~512$  & 4.64E-7 &2.00  &6.14E-6 &$2.00$ &4.98E-7 &$2.00$&3.33E-7 &$~2.00$  &2.05E-5 &$~2.00$   \\
		&$1024$  & 1.16E-7 &2.00  &1.54E-6 &$2.00$ &1.25E-8 &$2.00$&8.33E-8 &$~2.00$  &5.14E-6 &$~2.00$   \\
		\hline
		\multirow{4}{*}{$k=2$}
		&$~128$  & 1.41E-9 &      &8.49E-9 &       &3.08E-8 &      &3.08E-8 &         &2.58E-6 &   \\
		&$~256$  & 1.14E-10 &3.63  &5.36E-10 &$3.98$ &1.95E-9 &$3.98$&1.94E-9 &$~3.98$  &3.17E-7 &$~3.03$   \\
		&$~512$  & 8.05E-12 &3.82  &3.36E-11 &$3.99$ &1.21E-10 &$4.00$&1.21E-10 &$~4.00$  &3.92E-8 &$~3.02$   \\
		&$1024$  & 5.35E-13 &3.91  &2.09E-12 &$4.01$ &7.61E-12 &$4.00$&7.61E-12 &$~4.00$  &4.88E-9 &$~3.01$   \\
		\hline
		\multirow{4}{*}{$k=3$}
		&$~~32$  & 5.24E-8  &      &6.28E-8  &      &2.71E-7  &      &2.42E-7  &         &6.36E-6 &   \\
		&$~~64$  & 1.13E-9  &5.54  &1.36E-9  &5.53  &7.72E-9  &$5.13$&6.94E-9  &$~5.12$  &4.33E-7 &$~3.88$   \\
		&$~128$  & 2.51E-11 &5.49  &3.03E-11 &5.49  &2.13E-10 &$5.18$&1.97E-10 &$~5.14$  &2.63E-8 &$~4.04$   \\
		&$~256$  & 5.62E-13 &5.48  &6.79E-13 &5.48  &6.24E-12 &$5.10$&5.95E-12 &$~5.05$  &1.61E-9 &$~4.03$   \\
		\hline
	\end{tabular}
\end{table}

\begin{table}[!h]
	\centering
	\small{\caption{\emph{Errors and  convergence rates for the LSV solution itself approximation for Example 1.}}}
	\label{Tab:Eg1e}
	\begin{tabular}{lllllllllllll}
		\hline
		$~~~~k$&$~~n $ &~~$e_u$   &order &~~$e_{u,c}$  &order  &~~$e_{u,r}$&order  &~~$e_{u,n}$  &order &~~$e_{u,l}$  &order\\
		\hline
		\multirow{4}{*}{$k=1$}
		&$~128$  & 9.48E-6 &      &1.17E-4 &       &3.63E-5 &      &4.36E-5 &         &7.71E-4 &   \\
		&$~256$  & 2.38E-6 &1.99  &2.90E-5 &$2.02$ &9.28E-6 &$1.97$&1.02E-5 &$~2.09$  &2.43E-4 &$~1.66$   \\
		&$~512$  & 5.97E-7 &2.00  &7.23E-6 &$2.01$ &2.36E-6 &$1.98$&2.48E-6 &$~2.05$  &7.99E-5 &$~1.60$   \\
		&$1024$  & 1.50E-7 &2.00  &1.80E-6 &$2.00$ &5.95E-7 &$1.99$&6.10E-7 &$~2.02$  &2.70E-5 &$~1.56$   \\
		\hline
		\multirow{4}{*}{$k=2$}
		&$~128$  & 1.70E-8 &      &3.03E-9 &       &5.65E-8 &      &5.64E-8 &         &6.32E-6 &   \\
		&$~256$  & 1.52E-9 &3.49  &2.17E-9 &$3.80$ &4.74E-9 &$3.57$&4.71E-9 &$~3.58$  &1.03E-6 &$~2.62$   \\
		&$~512$  & 1.34E-10&3.50  &1.65E-10 &$3.72$ &3.96E-10 &$3.58$&3.94E-10 &$~3.58$  &1.74E-7 &$~2.57$   \\
		&$1024$  & 1.19E-11&3.50  &1.33E-11 &$3.64$ &3.39E-11 &$3.55$&3.37E-11 &$~3.55$  &3.00E-8 &$~2.53$   \\
		\hline
		\multirow{4}{*}{$k=3$}
		&$~~32$  & 2.25E-7 &      &3.21E-7 &       &1.09E-6 &      &9.36E-7 &         &2.33E-5 &   \\
		&$~~64$  & 9.46E-9 &4.57  &1.45E-8 &$4.47$ &5.01E-8 &$4.44$&3.81E-8 &$~4.62$  &2.08E-6 &$~3.49$   \\
		&$~128$  & 4.06E-10 &4.54  &6.50E-10 &$4.48$ &2.27E-9 &$4.46$&1.61E-9 &$~4.56$  &1.85E-7 &$~3.49$   \\
		&$~256$  & 1.77E-11 &4.52  &2.90E-11 &$4.49$ &1.02E-10 &$4.48$&7.00E-11 &$~4.53$  &1.64E-8 &$~3.49$   \\
		\hline
	\end{tabular}
\end{table}

{\it Example} 2. We consider the following equation with the periodic boundary condition:
\[
    u_t+(\sin^2(x)u)_x=g(x,t),\ \ (x,t)\in [0,2\pi]\times(0,\frac{\pi}{2}],\ \ u(x,0)=\text{e}^{\sin(x)}.
\]
We choose $g(x,t)$ such that the exact solution to this problem is
$
   u(x,t)=\text{e}^{\sin(x-t)}.
$
  In this case,
 \[
     \alpha(x)=\partial_x\alpha(x)=0,\ \ \partial_x^2\alpha(x)\neq 0,\ \ x=0,\pi,2\pi.
 \]
 Then $m=m'=2$ in Theorem 5.3.

We compute the same errors as in Example 1 on the same uniform meshes at time $t=2\pi$.
The computational results for the flux function approximation and for the  solution approximation itself for both LSV and RSV methods
are given in Tables 6.6-6.10.

In Table 6.6, we observe an optimal convergence rate for both the errors $\|u-u_h\|_0$ and $\|u-u_h\|_{0,\infty}$.
Similar to Example 1,  the convergence rates of $e_{f},e_{f,n}, e_{f,r}, e_{f,c}$ are at least $k+2$,  and the convergence rate of $e_{f,l}$ is $k+1$,
which  indicates that  most of the error bounds in Theorem 4.1, Theorem 5.2 and
Corollary 5.2 are sharp.  For the cell  average error $e_c$,  the convergence rate in case $k=2$ for RSV is $5$, one order higher than the theoretical result.
Again, we do not observe superconvergence results in case $k=1$ for LSV methods.

\begin{table}[!h]
\centering
\small{\caption{\emph{Errors and convergence rates of RSV and LSV methods for  Example 2.}}}
\label{Tab:Eg2a}
\begin{tabular}{ll|llll|llll}
\hline
$~ $  &       &\multicolumn{4}{c}{RSV}& \multicolumn{4}{c}{LSV} \\
\hline
 $~~k$&$~~n $ &$\|u-u_h\|_0$   &order  &$\|u-u_h\|_{0,\infty}$  &order  &$\|u-u_h\|_0$   &order  &$\|u-u_h\|_{0,\infty}$  &order\\
\hline
\multirow{4}{*}{$k=1$}
& $~128$ & 3.09E-4 &      &9.86E-4 &       & 8.59E-4 &      &1.52E-3 &      \\
& $~256$ & 9.94E-5 &1.97  &2.58E-4 &$1.94$ & 2.15E-4 &2.00  &3.72E-4 &$2.03$\\
& $~512$ & 2.53E-5 &1.98  &6.60E-5 &$1.97$ & 5.41E-5 &1.99  &9.24E-5 &$2.01$\\
& $1024$ & 6.39E-6 &1.98  &1.67E-5 &$1.98$ & 1.36E-5 &1.99  &2.29E-5 &$2.01$\\
\hline
\multirow{4}{*}{$k=2$}
& $~128$ & 3.06E-6 &      &8.50E-6 &       &4.61E-6 &      &1.33E-5 &              \\
& $~256$ & 3.76E-7 &3.03  &1.08E-6 &$2.97$ &5.73E-7 &3.01  &1.67E-6 &$3.00$\\
& $~512$ & 4.62E-8 &3.02  &1.32E-7 &$3.03$ &7.11E-8 &3.01  &2.09E-7 &$3.00$ \\
& $1024$ & 5.70E-9 &3.02  &1.64E-8 &$3.01$ &8.83E-9 &3.01  &2.61E-8 &$3.00$\\
\hline
\multirow{4}{*}{$k=3$}
& $~~32$ & 5.18E-6 &      &1.82E-5 &       &6.97E-6 &      &2.92E-5 &      \\
& $~~64$ & 3.03E-7 &4.09  &1.11E-6 &$4.04$ &4.32E-7 &4.00  &2.11E-6 &$3.79$\\
& $~128$ & 1.86E-8 &4.03  &7.15E-7 &$3.95$ &2.73E-8 &3.98  &1.32E-7 &$4.00$\\
& $~256$ & 1.15E-9 &4.01  &4.60E-8 &$3.96$ &1.74E-9 &3.97  &8.51E-9 &$3.96$\\
\hline
\end{tabular}
\end{table}

\begin{table}[!h]
\centering
\small{\caption{\emph{Errors and convergence orders for the RSV flux function approximation for Example 2.}}}
\label{Tab:Eg2b}
\begin{tabular}{lllllllllllll}
\hline
 $~~~~k$&$~~n $ &~~$e_{f}$   &order &~~$e_{f,c}$  &order  &~~$e_{f,r}$&order  &~~$e_{f,n}$  &order &~~$e_{f,l}$  &order\\
\hline
\multirow{4}{*}{$k=1$}
 &$~128$  & 4.97E-6 &      &4.81E-6 &     &8.74E-6 &      &7.21E-6 &         &2.18E-4 &   \\
 &$~256$  & 6.24E-7 &3.00  &6.03E-7 &3.00 &1.10E-6 &$3.00$&9.06E-7 &$~3.00$  &5.36E-5 &$~2.02$   \\
 &$~512$  & 7.80E-8 &3.00  &7.55E-8 &3.00 &1.38E-7 &$3.00$&1.13E-7 &$~3.00$  &1.34E-5 &$~2.00$   \\
 &$1024$  & 9.75E-9 &3.00  &9.44E-9 &3.00 &1.71E-8 &$3.00$&1.42E-8 &$~3.00$  &3.35E-6 &$~2.00$   \\
\hline
\multirow{4}{*}{$k=2$}
 &$~128$  & 2.85E-9  &      &2.86E-9  &      &3.31E-8  &       &1.29E-8  &         &2.97E-6 &   \\
 &$~256$  & 8.91E-11 &5.00  &8.52E-11 &5.07  &2.07E-9  &$4.00$&6.61E-10  &$~4.30$  &3.74E-7 &$~3.00$   \\
 &$~512$  & 2.78E-12 &5.00  &2.66E-12 &5.00  &1.29E-10 &$4.00$&3.33E-11  &$~4.31$  &4.68E-8 &$~3.00$   \\
 &$1024$  & 8.70E-14 &5.00  &8.31E-14 &5.00  &8.06E-11 &$4.00$&1.69E-12  &$~4.30$  &5.85E-9 &$~3.00$   \\
\hline
\multirow{4}{*}{$k=3$}
 &$~~32$  & 4.71E-8 &       &4.70E-8  &      &2.77E-7  &      &9.46E-8  &         &9.03E-6 &   \\
 &$~~64$  & 8.39E-10 &5.81  &8.40E-10 &5.81  &8.67E-9  &$5.00$&2.64E-9  &$~5.16$  &5.71E-7 &$~3.98$   \\
 &$~128$  & 2.25E-11 &5.22  &2.25E-11 &5.22  &2.55E-10 &$5.08$&6.60E-11 &$~5.32$  &3.44E-8 &$~4.05$   \\
 &$~256$  & 5.91E-13 &5.25  &6.01E-13 &5.25  &7.97E-12 &$5.01$&1.68E-12 &$~5.30$  &2.15E-9 &$~4.00$   \\
\hline
\end{tabular}
\end{table}

\begin{table}[!h]
\centering
\small{\caption{\emph{Errors and   convergence orders for the RSV solution approximation Example 2.}}}
\label{Tab:Eg2c}
\begin{tabular}{lllllllllllll}
\hline
 $~~~~k$&$~~n $ &~~$e_{u}$   &order &~~$e_{u,c}$  &order  &~~$e_{u,r}$&order  &~~$e_{u,n}$  &order &~~$e_{u,l}$  &order\\
\hline
\multirow{4}{*}{$k=1$}
 &$~128$  & 1.58E-6 &      &1.62E-5 &       &5.25E-5 &      &1.51E-4 &         &6.17E-3 &   \\
 &$~256$  & 2.16E-7 &2.87  &2.20E-6 &$2.88$ &1.08E-5 &$2.28$&3.19E-5 &$~2.25$  &2.61E-3 &$~1.24$   \\
 &$~512$  & 2.98E-8 &2.86  &3.03E-7 &$2.86$ &2.26E-6 &$2.26$&6.72E-6 &$~2.25$  &1.10E-3 &$~1.24$   \\
 &$1024$  & 4.13E-9 &2.85  &4.22E-8 &$2.84$ &4.73E-7 &$2.26$&1.41E-6 &$~2.25$  &4.62E-4 &$~1.25$   \\
\hline
\multirow{4}{*}{$k=2$}
 &$~128$  & 7.97E-8  &      &7.96E-8 &       &6.20E-7 &      &8.37E-7 &         &8.46E-5 &   \\
 &$~256$  & 6.26E-9  &3.67  &6.18E-9 &$3.67$ &6.73E-8 &$3.20$&8.90E-8 &$~3.23$  &1.83E-5 &$~2.21$   \\
 &$~512$  & 4.75E-10 &3.72  &4.71E-10&$3.71$ &7.22E-9 &$3.22$&9.41E-9 &$~3.24$  &3.91E-6 &$~2.23$   \\
 &$1024$  & 3.58E-11 &3.73  &3.56E-11&$3.72$ &7.68E-10 &$3.23$&9.92E-10&$~3.25$ &8.28E-7 &$~2.24$   \\
\hline
\multirow{4}{*}{$k=3$}
 &$~~32$  & 2.50E-7 &       &2.50E-7 &        &4.81E-6 &      &3.87E-6 &          &1.75E-4 &   \\
 &$~~64$  & 1.27E-8 &4.31  &1.27E-8 &$4.31$  &2.33E-7 &$4.37$&1.78E-7 &$~4.44$  &1.69E-5 &$~3.36$   \\
 &$~128$  & 5.42E-10 &4.54  &5.43E-10 &$4.54$  &1.26E-9 &$4.21$&9.83E-9 &$~4.18$  &1.80E-6 &$~3.23$   \\
 &$~256$  & 2.37E-11 &4.52  &2.37E-11 &$4.52$ &6.68E-10 &$4.23$&5.27E-10 &$~4.22$ &1.93E-7 &$~3.22$   \\
\hline
\end{tabular}
\end{table}

\begin{table}[!h]
\centering
\small{\caption{\emph{Errors and  convergence orders for the LSV flux
function approximation Example 2.}}}
\label{Tab:Eg2d}
\begin{tabular}{lllllllllllll}
\hline
 $~~~~k$&$~~n $ &~~$e_f$   &order &~~$e_{f,c}$  &order  &~~$e_{f,r}$&order  &~~$e_{f,n}$  &order &~~$e_{f,l}$  &order\\
\hline
\multirow{4}{*}{$k=1$}
 &$~128$  & 1.34E-4 &      &1.24E-4 &       &1.87E-4 &      &1.31E-4 &         &7.74E-4 &   \\
 &$~256$  & 3.30E-5 &2.01  &3.08E-5 &$2.01$ &4.63E-5 &$2.02$&3.26E-5 &$~2.01$  &1.93E-4 &$~2.00$   \\
 &$~512$  & 8.25E-6 &2.00  &7.70E-6 &$2.01$ &1.15E-5 &$2.01$&8.14E-6 &$~2.01$  &4.78E-3 &$~2.00$   \\
 &$1024$  & 2.06E-6 &2.00  &1.92E-6 &$2.01$ &2.87E-6 &$2.01$&2.03E-6 &$~2.01$  &1.19E-3 &$~2.00$   \\
\hline
\multirow{4}{*}{$k=2$}
 &$~128$  & 3.36E-8 &      &3.46E-8 &       &8.22E-8 &      &5.03E-8 &         &3.21E-6 &   \\
 &$~256$  & 2.27E-9 &4.00  &2.16E-9 &$4.00$ &5.18E-9 &$3.99$&3.15E-9 &$~4.00$  &3.97E-7 &$~3.01$   \\
 &$~512$  & 1.42E-10 &4.00  &1.35E-10 &$4.00$ &3.27E-10 &$3.99$&1.98E-10 &$~4.00$  &4.94E-8 &$~3.01$   \\
 &$1024$  & 8.86E-12 &4.00  &8.43E-12 &$4.00$ &2.04E-11 &$4.00$&1.24E-11 &$~4.00$  &6.17E-9 &$~3.00$   \\
\hline
\multirow{4}{*}{$k=3$}
 &$~~32$  & 1.29E-7  &      &1.29E-7  &      &5.07E-7  &      &2.02E-7  &         &1.68E-5 &   \\
 &$~~64$  & 5.17E-9  &4.64  &5.17E-9  &4.64  &1.84E-8  &$4.79$&7.32E-9  &$~4.79$  &7.54E-7 &$~3.96$   \\
 &$~128$  & 1.95E-10 &4.73  &1.95E-10 &4.73  &6.00E-10 &$4.94$&2.31E-10 &$~4.98$  &4.66E-8 &$~4.01$   \\
 &$~256$  & 5.10E-12 &5.25  &5.10E-12 &5.25  &1.88E-11 &$5.00$&7.22E-12 &$~5.00$  &2.91E-9 &$~4.00$   \\
\hline
\end{tabular}
\end{table}

\begin{table}[!h]
\centering
\small{\caption{\emph{Errors and   convergence rates for the LSV solution approximation for Example 2.}}}
\label{Tab:Eg2e}
\begin{tabular}{lllllllllllll}
\hline
 $~~~~k$&$~~n $ &~~$e_u$   &order &~~$e_{u,c}$  &order  &~~$e_{u,r}$&order  &~~$e_{u,n}$  &order &~~$e_{u,l}$  &order\\
\hline
\multirow{4}{*}{$k=1$}
 &$~128$  & 3.26E-4 &      &2.80E-4 &       &3.25E-4 &      &3.41E-4 &         &7.51E-3 &   \\
 &$~256$  & 8.05E-5 &2.01  &6.92E-5 &$2.02$ &8.05E-5 &$2.02$&8.31E-5 &$~2.04$  &3.07E-3 &$~1.29$   \\
 &$~512$  & 2.01E-5 &2.00  &1.72E-5 &$2.01$ &2.00E-5 &$2.01$&2.04E-5 &$~2.02$  &1.27E-3 &$~1.27$   \\
 &$1024$  & 5.03E-6 &2.00  &4.28E-6 &$2.01$ &4.98E-6 &$2.01$&5.05E-6 &$~2.02$  &5.34E-4 &$~1.25$   \\
\hline
\multirow{4}{*}{$k=2$}
 &$~128$  & 1.19E-7 &      &1.21E-7 &       &6.16E-7 &      &6.06E-7 &         &8.38E-5 &   \\
 &$~256$  & 8.17E-9 &3.86  &8.28E-9 &$3.87$ &6.66E-8 &$3.21$&6.28E-8 &$~3.27$  &1.80E-5 &$~2.22$   \\
 &$~512$  & 5.63E-10&3.86  &5.77E-10 &$3.84$ &7.16E-9 &$3.22$&6.58E-9 &$~3.26$  &3.84E-6 &$~2.23$   \\
 &$1024$  & 3.90E-11&3.85  &4.09E-11 &$3.82$ &7.65E-10 &$3.23$&6.91E-10 &$~3.25$  &8.13E-7 &$~2.24$   \\
\hline
\multirow{4}{*}{$k=3$}
 &$~~32$  & 3.01E-7 &      &3.00E-7 &       &3.26E-6 &      &2.14E-6 &         &1.68E-5 &   \\
 &$~~64$  & 1.77E-8 &4.08  &1.77E-8 &$4.08$ &1.85E-7 &$4.14$&9.17E-8 &$~4.54$  &7.54E-7 &$~3.31$   \\
 &$~128$  & 7.31E-10 &4.59  &7.31E-10 &$4.60$ &9.81E-9 &$4.24$&5.25E-9 &$~4.13$  &4.66E-8 &$~3.22$   \\
 &$~256$  & 3.03E-11 &4.60  &3.04E-11 &$4.60$ &5.15E-10 &$4.25$&2.80E-10 &$~4.23$  &4.97E-9 &$~3.23$   \\
\hline
\end{tabular}
\end{table}

\begin{table}[!h]
	\small{\caption{\emph{Example 1-- Comparison  between  SV methods  and upwind DG method in the cases $k=1,2,3.$}}}
	\label{Tab:Eg12a}	\centering
	\begin{tabular}{ll|llllll|llllll}
		\hline
		$~ $  &       &\multicolumn{6}{c}{RSV}& \multicolumn{6}{c}{LSV} \\
		\hline
		$~~k$&$~~n $ &~~$\|\bar{e}\|_{0}$   &~~r  &~~$\|\bar{e}\|_{f,c}$  &~~r  &~~$\|\bar{e}\|_{f,c}$  &~~r &~~$\|e\|_0$   &~~r  &~$\|\bar{e}\|_{f,c}$  &~~r &~$\|\bar{e}\|_{u,c}$  &~~r\\
		\hline
		\multirow{4}{*}{$k=1$}
		& $~128$ & 5.1E-05 &      &5.8E-06 &       &5.8E-06 &       & 6.3E-04 &      &5.6E-05 &      &5.4E-05 &      \\
		& $~256$ & 9.0E-06 &2.5  &7.4E-07 &$3.0$ &7.4E-07 &$3.0$ & 1.6E-04 &2.0  &1.4E-05 &$2.0$     &1.4E-05 &$2.0$\\
		& $~512$ & 1.6E-06 &2.5  &9.3E-08 &$3.0$ &9.3E-08 &$3.0$ & 4.0E-05 &2.0  &3.6E-06 &$2.0$     &3.4E-06 &$2.0$\\
		& $1024$ & 2.8E-07 &2.5  &1.2E-08 &$3.0$ &1.2E-08 &$3.0$ & 9.9E-06 &2.0  &9.0E-06 &$2.0$     &8.4E-06 &$2.0$ \\
		\hline
		\multirow{4}{*}{$k=2$}
		& $~128$ & 4.5E-08 &      &1.9E-08 &     &1.9E-08 &       &4.4E-06 &      &2.5E-08 &       &3.4E-08 &       \\
		& $~256$ & 3.6E-09 &3.6  &1.1E-09 &$4.1$ &1.1E-09 &$4.1$ &5.5E-07 &3.0  &1.5E-09 &$4.0$    &1.5E-09 &$4.0$\\
		& $~512$ & 3.1E-10 &3.5  &6.8E-11 &$4.0$ &6.8E-11 &$4.0$ &6.9E-08 &3.0  &9.6E-11 &$4.0$    &9.4E-11 &$4.0$\\
		& $1024$ & 2.7E-11 &3.5  &4.3E-12 &$4.0$ &4.3E-12 &$4.0$ &8.6E-09 &3.0  &6.0E-11 &$4.0$    &5.9E-12 &$4.0$\\
		\hline
		\multirow{4}{*}{$k=3$}
		& $~~32$ & 1.6E-06 &     &4.1E-07 &      &4.1E-07 &      &7.7E-06 &      &1.3E-07 &      &1.2E-07 &\\
		& $~~64$ & 6.8E-08 &4.6  &9.7E-09 &$5.4$ &9.7E-09 &$5.4$ &4.8E-07 &4.0   &4.2E-09 &$4.9$ &4.0E-09 &$4.9$\\
		& $~128$ & 3.0E-09 &4.5  &2.4E-10 &$5.3$ &2.4E-10 &$5.3$ &3.0E-08 &4.0   &1.4E-11 &$5.0$ &1.3E-11 &$5.0$\\
		& $~256$ & 1.3E-10 &4.5  &6.4E-12 &$5.3$ &6.4E-12 &$5.3$ &1.9E-09 &4.0   &4.2E-12 &$5.0$ &3.9E-12 &$5.0$\\
		\hline
	\end{tabular}
\end{table}

\begin{table}[!h]
	\small{\caption{\emph{Example 2-- Comparison results of SV and upwind DG in the cases $k=1,2,3.$}}}
	\label{Tab:Eg12b}	\centering
	\begin{tabular}{ll|llllll|llllll}
		\hline
		$~ $  &       &\multicolumn{6}{c}{RSV}& \multicolumn{6}{c}{LSV} \\
		\hline
		$~~k$&$~~n $ &~~$\|\bar{e}\|_0$   &r  &~~$\|\bar{e}\|_{f,c}$  &r&~~$\|\bar{e}\|_{u,c}$  &r  &~~$\|\bar{e}\|_0$   &r &~~$\|\bar{e}\|_{f,c}$  &r &~~$\|\bar{e}\|_{u,c}$  &r\\
		\hline
		\multirow{4}{*}{$k=1$}
		& $~128$ & 5.2E-05 &     &9.6E-07 &      &9.4E-07 &      & 6.3E-04 &     &5.1E-05  &     &5.0E-05  & \\
		& $~256$ & 9.1E-06 &2.5  &1.2E-07 &$3.0$ &1.2E-07 &$3.0$ & 1.6E-04 &2.0  &1.3E-05 &$2.0$ &1.2E-05 &$2.0$\\
		& $~512$ & 1.6E-06 &2.5  &1.5E-08 &$3.0$ &1.4E-08 &$3.0$ & 4.0E-05 &2.0  &3.3E-06 &$2.0$ &3.2E-06 &$2.0$\\
		& $1024$ & 2.9E-07 &2.5  &1.9E-09 &$3.0$ &1.9E-09 &$3.0$ & 9.9E-06 &2.0  &8.2E-07 &$2.0$ &8.1E-07 &$2.0$\\
		\hline
		\multirow{4}{*}{$k=2$}
		& $~128$ & 6.5E-08 &     &4.5E-10 &      &4.2E-10 &       &4.4E-06 &     &2.2E-08 &      &2.0E-08 &      \\
		& $~256$ & 4.9E-09 &3.7  &1.5E-11 &$4.9$ &1.4E-11 &$4.9$  &5.5E-07 &3.0  &1.4E-09 &$4.0$ &1.3E-09 &$4.0$\\
		& $~512$ & 4.9E-10 &3.3  &4.9E-13 &$4.9$ &4.4E-13 &$5.0$  &6.9E-08 &3.0  &8.6E-11 &$4.0$ &8.5E-11 &$4.0$ \\
		& $1024$ & 4.3E-11 &3.5  &1.5E-14 &$5.0$ &1.4E-14 &$5.0$  &8.6E-09 &3.0  &5.4E-12 &$4.0$ &5.3E-12 &$4.0$\\
		\hline
		\multirow{4}{*}{$k=3$}
		& $~~32$ & 1.3E-06 &     &1.3E-08 &       &1.2E-08 &      &7.7E-06 &     &7.3E-08 &      &7.0E-08 &      \\
		& $~~64$ & 5.3E-08 &4.6  &3.0E-10 &$5.4$  &2.9E-10 &$5.4$ &4.8E-07 &4.0  &3.2E-09 &$4.5$ &3.0E-09 &$4.5$\\
		& $~128$ & 2.2E-09 &4.6  &7.1E-12 &$5.4$  &6.9E-12 &$5.4$ &3.0E-08 &4.0  &1.0E-10 &$5.0$ &9.8E-11 &$5.0$\\
		& $~256$ & 9.8E-11 &4.5  &1.5E-13 &$5.5$  &1.3E-11 &$5.5$ &1.9E-09 &4.0  &3.1E-12 &$5.0$ &2.9E-12 &$5.0$\\
		\hline
	\end{tabular}
\end{table}

To compare the differences  between our SV methods and the upwind DG methods,
we also provide comparisons between
the two methods in terms of the error and accuracy.    Define
\[
   \bar{e}=u_h-\bar u_h,
\]
where  $\bar u_h$ is the solution of the upwind DG method. We list in Tables 6.11 and 6.12  variou errors of $\bar e$ in $L^2$ norm and  for cell average for  Example 1 and Example 2, respectively.

We observe that the $\|\bar{e}\|_{0}$ of RSV and LSV converges with the rates of $k+\frac{3}{2}$ and $k+1$, which indicates that the RSV solution is superclose to the DG solution.
We also observe that the convergence rates of $\bar{e}_{f,c}$ and $\bar{e}_{u,c}$ are at least $k+2$ for both LSV and RSV schemes.
When $k=1$, there exists no superconvergence phenomenon for the LSV method. These results are consistent with our theoretical findings.

\section{Concluding remarks}
In this work, we study the $L^2$-norm stability, convergence and superconvergence behaviors of LSV and RSV for 1-D linear hyperbolic equations with degenerate variable coefficients.
We prove that both SV methods are stable and have optimal convergence orders in the $L^2$ space.
Furthermore, we demonstrate the superconvergence behaviors including: the flux function approximation $\alpha u_h$ are of $2k$-th order superconvergent towards the flux function $\alpha u_I$, of $(k+2)$-th order superconvergent at the interpolation points and at downwind points, of $(k+2)$-th order superconvergent for the cell average, and the derivative of the flux function approximation $\alpha \partial_x u_h$  is of $(k+1)$-th order superconvergent at roots of  $\partial_x (\prod_{j=1}^{k+1}(x-y_{i,j}))$;
the convergence rate of the SV approximation solution itself $u_h$ depends upon the the specific property of $\alpha$, revealing that the highest superconvergence rate that can be achieved is $k+\frac{3}{2}$, which is half an order higher than the optimal convergence rate. These superconvergent results are the same as those obtained by upwind DG method. The proposed schemes can be easily extended to 2-D case. Numerical examples are provided to verify the theoretical findings.

The mathematical theory of the SV is far from developed.
More works are called for to find a better and more intrinsic understanding of the SV.
More works including the theory for higher dimensional conservation laws and nonlinear problems are under way.
%

\bigskip
\bigskip
\end{document}